\documentclass{article}

\usepackage[letterpaper,margin=1.2in]{geometry}

\usepackage{amsmath,amssymb,amsthm}
\usepackage{enumerate}
\usepackage[retainorgcmds]{IEEEtrantools}
\usepackage{hyperref}
\usepackage{mathtools,pifont}
\usepackage{tikz-cd}
\usepackage{tikz}
\usepackage{mathrsfs}
\usepackage{xcolor}

\newtheorem{theorem}{Theorem}[section]
\newtheorem{observation}[theorem]{Observation}
\newtheorem{lemma}[theorem]{Lemma}
\newtheorem{proposition}[theorem]{Proposition}
\newtheorem{corollary}[theorem]{Corollary}
\newtheorem{question}[theorem]{Question}

\newtheorem*{theorem*}{Theorem}

\numberwithin{equation}{section}

\theoremstyle{definition}
\newtheorem{definition}[theorem]{Definition}

\theoremstyle{remark}
\newtheorem{remark}[theorem]{Remark}
\newtheorem{example}[theorem]{Example}

\newcommand\R{\mathbb{R}}
\newcommand\C{\mathbb{C}}

\newcommand\Z{\mathbb{Z}}
\newcommand\N{\mathbb{N}}

\newcommand\E{\mathbb{E}}

\newcommand{\cA}{\mathcal{A}}
\newcommand{\cB}{\mathcal{B}}

\newcommand{\cD}{\mathcal{D}}

\newcommand{\cF}{\mathcal{F}}

\newcommand{\cK}{\mathcal{K}}
\newcommand{\cL}{\mathcal{L}}
\newcommand{\cM}{\mathcal{M}}
\newcommand{\cN}{\mathcal{N}}
\newcommand{\cP}{\mathcal{P}}
\newcommand{\cQ}{\mathcal{Q}}
\newcommand{\cR}{\mathcal{R}}
\newcommand{\cS}{\mathcal{S}}
\newcommand{\cU}{\mathcal{U}}
\newcommand{\cV}{\mathcal{V}}
\newcommand{\cO}{\mathcal{O}}

\newcommand{\bA}{\mathbf{A}}
\newcommand{\bD}{\mathbf{D}}
\newcommand{\bS}{\mathbf{S}}
\newcommand{\bT}{\rT}
\newcommand{\bW}{\mathbf{W}}
\newcommand{\bX}{\mathbf{X}}
\newcommand{\bY}{\mathbf{Y}}
\newcommand{\bZ}{\mathbf{Z}}

\newcommand{\rT}{\mathrm{T}}

\DeclareMathOperator{\re}{Re}
\DeclareMathOperator{\im}{Im}

\DeclareMathOperator{\tr}{tr}

\DeclareMathOperator{\sa}{sa}

\DeclareMathOperator{\qf}{qf}
\DeclareMathOperator{\tp}{tp}
\DeclareMathOperator{\full}{full}

\DeclareMathOperator{\Th}{Th}
\DeclareMathOperator{\diam}{diam}

\DeclareMathOperator{\vol}{vol}

\DeclarePairedDelimiter{\norm}{\lVert}{\rVert}
\DeclarePairedDelimiter{\ip}{\langle}{\rangle}

\newcommand{\forkindep}[1][]{%
	\mathrel{
		\mathop{
			\vcenter{
				\hbox{\oalign{\noalign{\kern-.3ex}\hfil$\vert$\hfil\cr
						\noalign{\kern-.7ex}
						$\smile$\cr\noalign{\kern-.3ex}}}
			}
		}\displaylimits_{#1}
	}
}

\title{Free probability and model theory of tracial $\mathrm{W}^*$-algebras}
\author{David Jekel}

\begin{document}
	
	\maketitle
	
	\begin{abstract}
	The notion of a $*$-law or $*$-distribution in free probability is also known as the \emph{quantifier-free type} in Farah, Hart, and Sherman's model theoretic framework for tracial von Neumann algebras.  However, the \emph{full type} can also be considered an analog of a classical probability distribution (indeed, Ben Yaacov showed that in the classical setting, atomless probability spaces admit quantifier elimination and hence there is no difference between the full type and the quantifier-free type). We therefore develop a notion of Voiculescu's free microstates entropy for a full type, and we show that if $\bX$ is a $d$-tuple in $\cM$ with $\chi^{\cU}(\bX:\cM) > -\infty$ for a given ultrafilter $\cU$, then there exists an embedding $\iota$ of $\cM$ into $\cQ = \prod_{n \to \cU} M_n(\C)$ with $\chi(\iota(X): \cQ) = \chi(X:\cM)$; in particular, such an embedding will satisfy $\iota(X)' \cap \cQ = \C$ by the results of Voiculescu.  Furthermore, we sketch some open problems and challenges for developing model-theoretic versions of free independence and free Gibbs laws.
	\end{abstract}
	
	\section{Introduction}
	
	This article aims to show the actual and possible applications of a model-theoretic framework to problems in free probability.  In particular, we discuss an analog of Voiculescu's free entropy for types rather than simply non-commutative $*$-laws.  We use this to deduce that, for instance, a free group $\mathrm{W}^*$-algebra admits an embedding to any given matrix ultraproduct $\cQ = \prod_{n \to \cU} M_n(\C)$ which has trivial relative commutant (also known as an \emph{irreducible} embedding).  This corollary is closely related to the work of Voiculescu in \cite{VoiculescuFE3}, but we in fact prove the stronger result that there exists an embedding such that the entropy of the canonical generators in the presence of $\cQ$ is finite.  The author has also studied the model-theoretic version of $1$-bounded entropy in \cite{JekelCoveringEntropy}.
	
	The final sections of this article discuss some open questions about defining a model-theoretic notion of (free) independence in non-commutative probability theory, which is intimately connected to questions about free entropy and random matrix theory.  Furthermore, we explain open problems pertaining to optimization problems in free probability for which model theory may provide a way forward.
	
	\subsection{Non-commutative probability spaces and laws; free independence}
	
	For this article, a \emph{tracial $\mathrm{W}^*$-algebra} is a von Neumann algebra $M$ together with a faithful normal tracial state $\tau: M \to \C$.  We refer to Adrian Ioana's article in this book for background.   We will often denote the pair $(M,\tau)$ by a single calligraphic letter $\cM$.  This fits conveniently with the model-theoretic convention of using $\cM$ for a structure or for a model of some theory.
	
	Free probability theory, developed in large part by Voiculescu (see e.g.\ \cite{Voiculescu1986}, \cite{VDN1992}), studies free products of operator algebras from a probabilistic viewpoint. Tracial $\mathrm{W}^*$-algebras have long been seen as an analog of probability spaces since every commutative tracial $\mathrm{W}^*$-algebra is isomorphic to an $L^\infty$ space with the trace given by the expectation (see e.g.\ \cite{Sakai1971}).  The elements of a tracial $\mathrm{W}^*$-algebra are thus non-commutative analogs of bounded complex random variables.  Now Cartesian products of groups lead to tensor products of tracial $\mathrm{W}^*$-algebras, and tensor products of commutative tracial $\mathrm{W}^*$-algebras produce independent random variables.  In the same way, free products of groups lead to free products of tracial $\mathrm{W}^*$-algebras, which produces \emph{freely independent} random variables.  For background on free products, see \cite[\S 5]{AGZ2009} or \cite{VDN1992} or \cite[\S 5.3]{AP2017}.
	
	The parallel between classical and free independence (and in general between probability spaces and tracial $\mathrm{W}^*$-algebras) motivated numerous constructions in free probability.  First, \emph{non-commutative $*$-laws} are an analog of a (compactly supported) multivariable probability distribution in the non-commutative setting.  The law or distribution of a $d$-tuple $(X_1,\dots,X_d)$ of complex random variables is the unique measure satisfying
	\[
	\int_{\C^d} f\,d\mu = E[f(X_1,\dots,X_d)]
	\]
	for all appropriate test functions $f$; in the compactly supported case, we can take $f$ to be polynomials in $x_1$, \dots, $x_d$ and $\overline{x}_1$, \dots, $\overline{x}_d$ since a compactly supported probability measure $\mu$ on $\C^d$ is uniquely determined by its \emph{$*$-moments}
	\[
	\int_{\C^d} x_1^{k_1} \dots x_d^{k_d} \overline{x}_1^{\ell_1} \dots \overline{x}_d^{\ell_d} \,d\mu(x_1,\dots,x_d)
	\]
	for $k_1$, \dots, $k_d$ and $\ell_1$, \dots, $\ell_d \in \N_0$.  In the non-commutative case, we \emph{define} a non-commutative $*$-law through its non-commutative $*$-moments.  Instead of the space of classical polynomials $\C[t_1,\dots,t_d,\overline{t}_1,\dots,\overline{t}_d]$, we use the space of non-commutative $*$-polynomials $\C\ip{t_1,\dots,t_d,t_1^*,\dots,t_d^*}$.  The non-commutative $*$-law of a $d$-tuple $\bX$ is the map $\mu_{\bX}: \C\ip{t_1,\dots,t_d,t_1^*,\dots,t_d^*} \to \C$ given by
	\[
	\mu_{\bX}: p \mapsto \tau(p(\bX)).
	\]
	For $r > 0$, we denote by $\Lambda_{d,r}$ the space of non-commutative of $*$-laws of tuples $\bX$ such that each $\norm{X_j} \leq r$.  From a $\mathrm{C}^*$-algebraic viewpoint, $\Lambda_{d,r}$ is the space of tracial states on the $\mathrm{C}^*$-universal free product of $d$ copies of the universal $\mathrm{C}^*$-algebra generated by an operator of norm $\leq R$.
	We equip $\Lambda_{d,r}$ with the weak-$*$ topology obtained by viewing it as a subset of the dual of $\C\ip{t_1,\dots,t_d,t_1^*,\dots,t_d^*}$ (that is, the topology of pointwise convergence on $\C\ip{t_1,\dots,t_d,t_1^*,\dots,t_d^*}$); in this topology it is compact and metrizable.
	
	One of the central notions of free probability is free independence.  Given a tracial $\mathrm{W}^*$-algebra $\cM$, the free independence of two $\mathrm{W}^*$-subalgebras $\cA$ and $\cB$ is a condition on the joint moments $\tau(a_1 b_1 \dots a_n b_n)$ for $a_j \in \cA$ and $b_j \in \cB$, which allows them to be expressed in terms the traces of elements from $\cA$ and from $\cB$ individually (see e.g.\ \cite[Theorem 19]{MS2017}).  
	For our purposes, it is sufficient to use the following characterization of free independence.
	
	\begin{proposition}[{See e.g.\ \cite[\S 5]{AGZ2009} and \cite{VDN1992}}]
		Let $\cA$ and $\cB$ be tracial $\mathrm{W}^*$-algebras and let $\cA * \cB$ be their (tracial) free product.  If $\iota: \cA * \cB \hookrightarrow \cM$ is a trace-preserving $*$-homomorphism, then $\iota(\cA)$ and $\iota(\cB)$ are freely independent in $\cM$.
	\end{proposition}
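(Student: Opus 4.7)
The plan is to reduce everything to the definition of free independence, which is a purely moment-theoretic condition, and then observe that a trace-preserving $*$-homomorphism preserves all moments by definition.

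First, I would recall the standard definition: two $\mathrm{W}^*$-subalgebras $\cA$ and $\cB$ of a tracial $\mathrm{W}^*$-algebra $(\cM,\tau)$ are \emph{freely independent} if, for every $n \geq 1$ and every alternating tuple $c_1, c_2, \dots, c_n$ with $c_j$ lying in $\cA$ or $\cB$ (alternately), each $c_j$ being centered (i.e.\ $\tau(c_j) = 0$), one has $\tau(c_1 c_2 \cdots c_n) = 0$. This condition involves only the trace of products and hence depends only on the restriction of $\tau$ to the $*$-algebra generated by $\cA \cup \cB$.

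Next, I would invoke the construction/universal property of the tracial free product $\cA * \cB$. By Voiculescu's construction of the reduced free product (via the free product Hilbert space with respect to the GNS constructions of $\tau_\cA$ and $\tau_\cB$), the canonical images of $\cA$ and $\cB$ inside $\cA * \cB$ are freely independent with respect to the resulting trace $\tau_{\cA * \cB}$; indeed this is exactly how $\tau_{\cA * \cB}$ is defined on alternating words. In particular, for any alternating centered tuple $c_1, \dots, c_n$ in $\cA \cup \cB \subset \cA * \cB$, we have $\tau_{\cA * \cB}(c_1 \cdots c_n) = 0$.

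Finally, suppose $\iota: \cA * \cB \to \cM$ is a trace-preserving $*$-homomorphism. Given an alternating centered tuple $c_1, \dots, c_n$ in $\iota(\cA) \cup \iota(\cB) \subset \cM$, pull back to preimages $c_j' \in \cA \cup \cB$ under $\iota$; since $\iota$ preserves traces, each $c_j'$ is still centered in $\cA * \cB$, and
\[
\tau_\cM(c_1 c_2 \cdots c_n) = \tau_\cM(\iota(c_1' \cdots c_n')) = \tau_{\cA * \cB}(c_1' \cdots c_n') = 0.
\]
This verifies the defining condition for free independence of $\iota(\cA)$ and $\iota(\cB)$ in $\cM$. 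There is essentially no obstacle here; the only subtlety worth spelling out is that the existence of the trace on the tracial free product (with the required vanishing property on alternating centered words) is itself a nontrivial theorem of Voiculescu, but this is taken as given in the references cited.
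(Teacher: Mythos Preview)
Your argument is correct and is exactly the standard proof one finds in the cited references. The paper itself does not give a proof of this proposition; it is stated with a citation (``See e.g.\ \cite[\S 5]{AGZ2009} and \cite{VDN1992}'') and used only as background, so there is no paper-specific proof to compare against. Your reduction to the moment-vanishing definition, together with the observation that trace-preserving $*$-homomorphisms transport the alternating-centered-word condition verbatim, is precisely the content behind those citations.
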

	
	One of the most important laws in classical probability theory is the Gaussian or normal distribution.  The analog in free probability is the semicircular distribution (see for instance \cite{VDN1992}).  A \emph{semicircular random variable} is a self-adjoint $X$ from some tracial $\mathrm{W}^*$-algebra $\cM$ satisfying
	\[
	\tau_{\cM}[f(X)] = \frac{1}{2\pi} \int_{-2}^2 f(x) \sqrt{4 - x^2}\,dx \text{ for } f \in C(\R).
	\]
	The multivariate analog of a semicircular random variable is a \emph{free semicircular family}, that is, a $d$-tuple $(X_1,\dots,X_d)$ such that each $X_j$ is semicircular, and $\mathrm{W}^*(X_1)$, \dots, $\mathrm{W}^*(X_d)$ are freely independent.  Similarly, the analog of a complex Gaussian is a \emph{circular random variable}, that is, a variable of the form $Z = (X + iY) / \sqrt{2}$ where $X$ and $Y$ are freely independent semicircular variables; and a \emph{free circular family} is a $d$-tuple $(Z_1,\dots,Z_d)$ of circular random variables generating freely independent $\mathrm{W}^*$-algebras.
	
	\subsection{Free entropy} \label{subsec:entropyintro}
	
	If there are non-commutative laws or probability distributions, there should also be a free analog of the entropy of a probability distribution.  Recall that the differential entropy of a probability measure $\mu$ on $\C^d$ is $h(\mu) = -\int \rho \log \rho \,dx$ whenever $\mu$ has a density $\rho$ with respect to Lebesgue measure (and if there is no density, $h(\mu)$ is defined to be $-\infty$).
	
	Voiculescu defined several analogs of $h$ in free probability.  We will primarily be concerned with the ``free microstate entropy'' $\chi(\mu)$ for a non-commutative $*$-law of a $d$-tuple, defined in \cite{VoiculescuFE2}.  Unlike $h$, the free entropy $\chi$ cannot be defined directly by an integral formula (except in the case of a single self-adjoint random variable).  Instead Voiculescu took an approach based on statistical mechanics and Boltzmann's formulation of entropy in terms of microstates.  The microstate entropy, roughly speaking, quantifies the amount of matrix tuples that have non-commutative laws close to $\mu$.
	
	More precisely, the free microstate entropy is defined as follows.  Let $\cO$ be an open set in $\Lambda_{d,r}$.  Then we define the microstate space
	\[
	\Gamma_r^{(n)}(\cO) := \{ \bY \in M_n(\C)^d: \mu_{\bY} \in \cO \}.
	\]
	The free entropy is defined using the exponential growth rate of the Lebesgue measure of these microstate spaces.  Note that $M_n(\C)^d$ is an $dn^2$-dimensional complex inner-product space with the inner product
	\[
	\ip{\bX,\bY} = \sum_{j=1}^d \tr_n(X_j^* Y_j).
	\]
	Thus, there is a canonical Lebesgue measure on $M_n(\C)_{\sa}^d$ defined by identifying it with $\C^{dn^2}$ by an isometry.  Let $\vol \Gamma_r^{(n)}(\cO)$ be its Lebesgue measure.  Then
	\[
	\chi_r(\mu) = \inf_{\cO \ni \mu} \left( \limsup_{n \to \infty} \left(\frac{1}{n^2} \log \vol \Gamma_r^{(n)}(\cO) + 2d \log n\right) \right),
	\]
	where $\cO$ ranges over all open neighborhoods in $\Lambda_{d,r}$ of $\mu$.  The normalization of dividing by $n^2$ and adding $2d \log n$ can be motivated by the fact that this normalization will yield a finite limit if we applied it to the log-volume of an $r$-ball in $\C^{dn^2}$; similarly, one can check that this normalization works for variables whose real and imaginary parts are freely independent semicirculars \cite{VoiculescuFE2} (for further discussion of the normalization, see \cite[appendix]{ST2022}).  We also write $\chi_r(\bX) = \chi_r(\mu_{\bX})$ for a $d$-tuple $\bX$.
	
	Here we give the definition of $\chi$ for arbitrary (non-self-adjoint) $d$-tuples rather than the self-adjoint version from \cite{VoiculescuFE2,VoiculescuFE3}.  This is because we have chosen to work with non-self-adjoint tuples in this paper because it seems to fit more naturally with the definition of types in the model theory of tracial $\mathrm{W}^*$-algebras.  The self-adjoint version is defined in the same way as described above except that we restrict to self-adjoint tuples $\bX$ and correspondingly consider microstate spaces of self-adjoint matrix $d$-tuples rather than arbitrary matrix $d$-tuples.  Because every matrix $Z$ can be uniquely written as $X + iY$ where the ``real and imaginary parts'' $X$ and $Y$ are self-adjoint, one can show that the free entropy for $d$ non-self-adjoint variables $(Z_1,\dots,Z_d)$ agrees with the free entropy for the $2d$ self-adjoint variables obtained by taking the real and imaginary parts of the $Z_j$.
	
	In the definition of $\chi$, we do not know whether using a $\limsup$ or $\liminf$ will give the same answer (see \cite{Voiculescu2002}).  Thus, it is also convenient to use an ultrafilter variant as in \cite{VoiculescuFE4}:  For a free ultrafilter $\cU$ on $\N$, set
	\[
	\chi_R^{\cU}(\mu) = \inf_{\cO \ni \mu} \lim_{n \to \cU} \left( \frac{1}{n^2} \log \vol(\Gamma_R^{(n)}(\cO)) + 2d \log n \right).
	\]
	Intuitively, if $\cM$ is a tracial $\mathrm{W}^*$-algebra generated by $\bX = (X_1,\dots,X_d)$, then $\chi_R^{\cU}(\mu_{\bX})$ measures the amount of ways to embed $\cM$ into the ultraproduct $\cQ = \prod_{n \to \cU} M_n(\C)$.  Indeed, an embedding of $\cM$ into $\cQ$ is uniquely determined by where it sends the generators $\bX$, and thus corresponds to an equivalence class of sequences $[\bY^{(n)}]_{n \in \N}$ such that $\bY^{(n)} \in M_n(\C)^d$ and $\lim_{n \to \cU} \lambda_{\bY^{(n)}} = \lambda_{\bX}$.  Moreover, $\lim_{n \to \cU} \mu_{\bY^{(n)}} = \mu_{\bX}$ means precisely that for every neighborhood $\cO$ of $\mu_{\bX}$, we have that $\{n: \mu_{\bY^{(n)}} \in \cO\}$ or $\{n: \bY^{(n)} \in \Gamma_R^{(n)}(\cO)\}$ is in the ultrafilter $\cU$.
	
	Voiculescu also defined in \cite{VoiculescuFE3} the free entropy $\chi(\mathbf{X}: \mathbf{Y})$, called the \emph{free entropy of $\mathbf{X}$ in the presence of $\mathbf{Y}$}, where $\mathbf{X} = (X_1,\dots,X_d)$ and $\mathbf{Y} = (Y_1,\dots,Y_m)$ are tuples in a tracial $\mathrm{W}^*$-algebra $\cM$.  This version is defined by replacing the space of matrix microstates for $\mathbf{X}$ alone with the space of microstates for $\mathbf{X}$ which extend to microstates for $(\mathbf{X},\mathbf{Y})$.  See \S \ref{subsec:inthepresence} for details.
	
	Entropy in the presence has several operator-algebraic applications.  The first one which is of interest for this paper is its relationship with relative commutants.  Voiculescu showed in \cite[Corollary 4.3]{VoiculescuFE3} that if $\chi(\mathbf{X}: \mathbf{Y}) > -\infty$, then $\mathrm{W}^*(\mathbf{X})' \cap \mathrm{W}^*(\mathbf{X},\mathbf{Y}) = \C$ (and in fact there are no nontrivial sequences in $\mathrm{W}^*(\bX,\bY)$ that asymptotically commute with $\mathrm{W}^*(\bX)$).  Later, Voiculescu defined
	\[
	\chi(\mathbf{X}:\cM) = \inf_{m \in \N} \inf_{\mathbf{Y} \in M^m} \chi(\mathbf{X}: \mathbf{Y}).
	\]
	Note that if $\chi(\mathbf{X}:\cM) > -\infty$, then $\mathrm{W}^*(\mathbf{X})' \cap \cM = \C$ since we can apply \cite[Corollary 4.3]{VoiculescuFE3} to any tuple $\mathbf{Y}$ from $\mathrm{W}^*(\mathbf{X})' \cap \cM$.
	
	In this paper, we will prove the following result about entropy in the presence.
	
	\begin{theorem} \label{thm:entropyembedding}
		Let $\cM$ be a tracial non-commutative probability space and $\bX \in \cM^d$.  If $\chi^{\cU}(\bX: \cM) > -\infty$, then there exists an embedding of $\iota: \cM \to \cQ = \prod_{n \to \cU} M_n(\C)$ such that
		\[
		\chi^{\cU}(\iota(\bX): \cQ) = \chi^{\cU}(\bX,\cM).
		\]
		In particular, by \cite[Corollary 4.3]{VoiculescuFE3}, this implies that $\iota(\bX)' \cap \cQ = \C$, hence also $\iota(\cM)' \cap \cQ = \C$.
	\end{theorem}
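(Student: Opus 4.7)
The $\leq$ inequality is immediate: for any trace-preserving embedding $\iota$ and any $\bY \in \cM^m$, the joint laws $\mu_{(\bX, \bY)}$ and $\mu_{(\iota(\bX), \iota(\bY))}$ agree, so $\chi^{\cU}(\iota(\bX): \iota(\bY)) = \chi^{\cU}(\bX: \bY)$; taking the infimum over $\bY \in \bigcup_m \cM^m$ gives $\chi^{\cU}(\iota(\bX): \cQ) \leq \chi^{\cU}(\bX: \cM) =: c$. The real work is to produce an embedding $\iota$ realizing the reverse inequality, which I plan to do by carefully choosing the microstate representative of $\iota(\bX)$.

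For each $m$, write $\pi_m: \Lambda_{d+m, R} \to [-\infty, \infty]$ for the ``projected entropy'' of a joint law, defined by $\pi_m(\nu) := \inf_{\cO \ni \nu} \lim_{n \to \cU}[\frac{1}{n^2}\log \vol(\text{Proj}_d \Gamma_R^{(n)}(\cO)) + 2d \log n]$, where $\text{Proj}_d$ projects onto the first $d$ coordinates; thus $\chi^{\cU}(\bX: \bY) = \pi_m(\mu_{(\bX, \bY)})$ for $\bY \in \cM^m$. As an infimum of quantities depending only on the open set $\cO$, the function $\pi_m$ is upper semi-continuous on the compact metric space $\Lambda_{d+m, R}$. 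The hypothesis $\chi^{\cU}(\bX: \cM) = c$ amounts to saying that every $\mu_{(\bX, \bY)}$ for $\bY \in \cM^m$ lies in the closed set $\{\pi_m \geq c\}$.

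Reducing to the separable case, I fix a countable dense sequence $\{\bY_k\}$ in $\bigcup_m \cM^m$. For each $K$, the hypothesis applied to $(\bY_1, \ldots, \bY_K)$ yields a ``good'' set $G_K^{(n)} \subseteq M_n(\C)^d$ of $\bY^{(n)}$ that extend to an approximate joint microstate for $\mu_{(\bX, \bY_1, \ldots, \bY_K)}$, with $\lim_{n \to \cU}\frac{1}{n^2}\log\vol(G_K^{(n)}) + 2d\log n \geq c$. For each $m \geq 1$ and $\ell \geq 1$, the open bad set $\{\pi_m < c - 1/\ell\} \subseteq \Lambda_{d+m, R}$ is covered (by second countability) by countably many opens $\cO_{m,\ell, k}$ whose corresponding projected microstate sets $B_{m, \ell, k}^{(n)} := \text{Proj}_d \Gamma_R^{(n)}(\cO_{m, \ell, k})$ satisfy $\frac{1}{n^2}\log\vol(B_{m, \ell, k}^{(n)}) + 2d\log n < c - 1/\ell$ for $\cU$-many $n$; thus each such set occupies at most an exponentially small fraction (roughly $e^{-n^2/\ell}$) of $G_K^{(n)}$. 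A diagonal argument then selects $\bY^{(n)}$ in $G_{K(n)}^{(n)}$ avoiding all $B_{m, \ell, k}^{(n)}$ with $(m, \ell, k)$ in a growing finite range, the growth being slow enough that the total removed bad volume is exponentially smaller than the good volume.

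The associated microstate extensions $[\bW_k^{(n)}]$ of $\bY^{(n)}$ then define a trace-preserving embedding $\iota: \cM \to \cQ$ sending $\bX \mapsto [\bY^{(n)}]$ and $\bY_k \mapsto [\bW_k^{(n)}]$, extended to all of $\cM$ by $\|\cdot\|_2$-continuity from the dense family. For any $\bZ \in \cQ^m$ with representative $[\bZ^{(n)}]$, the joint law $\nu := \mu_{(\iota(\bX), \bZ)}$ is realized along $\cU$, so $\bY^{(n)}$ lies in $\text{Proj}_d \Gamma_R^{(n)}(\cO)$ for every $\cO \ni \nu$ and $\cU$-many $n$; the construction forbids this whenever $\cO \subseteq \cO_{m, \ell, k}$, forcing $\nu \notin \{\pi_m < c - 1/\ell\}$ for every $\ell$, and hence $\pi_m(\nu) \geq c$. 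Thus $\chi^{\cU}(\iota(\bX): \bZ) = \pi_m(\nu) \geq c$, and the infimum over $\bZ$ gives the desired reverse inequality. The main obstacle is the combinatorial bookkeeping in the diagonal/covering step: the bad sets $\{\pi_m < c\}$ are open but not compact, so no finite subcover exists, and one must combine second-countability with a sufficiently slow diagonalization to ensure the subtracted bad volume remains exponentially smaller than the good volume used to define $\iota$.
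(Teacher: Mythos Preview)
Your approach is correct and genuinely different from the paper's. The paper builds a model-theoretic framework: it defines free entropy $\chi_{\full}^{\cU}$ for \emph{full types} and $\chi_{\exists}^{\cU}$ for \emph{existential types}, proves that $\chi^{\cU}(\bX:\cM)$ coincides with $\chi_{\exists}^{\cU}(\tp_{\exists}^{\cM}(\bX))$, and establishes a variational principle $\chi_{\exists}^{\cU}(\mu) = \max_{\nu \in \pi_{\exists}^{-1}(\cK_\mu)} \chi_{\full}^{\cU}(\nu)$ via compactness of the full type space. The maximizing full type $\nu$ is then realized in $\cQ$, and the existential inequality $\pi_{\exists}(\nu) \in \cK_\mu$ is used to extend this to an embedding of all of $\cM$. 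By contrast, you stay entirely within the quantifier-free world of non-commutative $*$-laws: your upper-semicontinuous ``projected entropy'' $\pi_m$ on $\Lambda_{d+m,R}$, the Lindel\"of covering of the open sublevel sets $\{\pi_m < c - 1/\ell\}$ by small-volume witnesses $\cO_{m,\ell,k}$, and the volume-counting diagonal selection of $\bY^{(n)}$ in the good set minus a growing finite union of bad projections, together accomplish directly what the paper's variational principle does abstractly.

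What each buys: the paper's route is cleaner and yields the sharper statement that even $\chi_{\full}^{\cU}(\tp^{\cQ}(\iota(\bX)))$ equals $\chi^{\cU}(\bX:\cM)$, and the variational principle is reusable machinery. Your route is more elementary---it needs no type spaces beyond $\Lambda_{d+m,R}$ and no notion of full or existential type---and makes the probabilistic intuition (``a generic microstate in the presence-microstate space avoids every exponentially-small bad region'') completely explicit; indeed the paper itself remarks that its proof is ``in spirit'' such a probabilistic argument. Two small points worth tightening in a write-up: first, the restriction to $\norm{\bZ} \leq R$ needs the (easy) observation that $\chi^{\cU}(\iota(\bX):\bZ)$ depends only on $\mathrm{W}^*(\bZ)$, so one may rescale; second, the nested diagonal scheme (shrinking neighborhoods for the good sets, growing index set $F(n)$ for the bad sets, and the decreasing chain $E_j \in \cU$ guaranteeing nonemptiness) should be spelled out, since it is exactly the ``combinatorial bookkeeping'' you flag as the main obstacle.
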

	
	The hypotheses of the theorem hold when $\bX$ is a free circular family and $\cM = \mathrm{W}^*(\bX)$ (or more generally the free product of $\mathrm{W}^*(\bX)$ with some other Connes-embeddable tracial $\mathrm{W}^*$-algebra); see \cite{VoiculescuFE3}.  Thus, in particular, for $d \geq 1$ there exists an embedding of a free group $\mathrm{W}^*$-algebra $L(\mathbb{F}_{2d})$ into $\cQ$ with trivial relative commutant since $L(\mathbb{F}_{2d})$ is isomorphic to the tracial $\mathrm{W}^*$-algebra generated by a free circular $d$-tuple.  (One can show the case for $L(\mathbb{F}_m)$ with $m \geq 3$ odd either by considering the self-adjoint version of entropy or observing that $L(\mathbb{F}_{2d+1}) = L(\mathbb{F}_{2d}) * L(\Z)$.)
	
	We remark that the result that if $\chi^{\cU}(\bX:\cM) > -\infty$, then there exists an embedding of $\mathrm{W}^*(\bX)$ into $\cQ$ with trivial relative commutant could be proved directly from the same arguments as \cite[\S 3]{VoiculescuFE3}.  Roughly speaking, the argument is that for $\delta \in (0,1/2)$, the set of matrix tuples for which there exists some projection $P$ of trace between $\delta$ and $1 - \delta$ almost commuting with $\bX$ has very small volume compared to the microstate space for $\bX$ in the presence of $\cM$.  Consequently, a randomly chosen embedding for $\mathrm{W}^*(\bX)$ extending to an embedding of $\cM$ could not have any nontrivial projection that commutes with $\iota(\mathrm{W}^*(\bX))$.  This probabilistic argument is similar in spirit to von Neumann's earlier work \cite{vN1942} which showed that for large $n$ most matrices $A$ do not approximately commute with any matrices $B$ other than those which are close to scalar multiples of identity.
	
	However, the stronger statement that there exists an embedding with $\chi^{\cU}(\iota(\bX):\cQ) = \chi^{\cU}(\bX:\cM)$ is new to our work.  We will prove this theorem by means of adapting Voiculescu's free entropy to model-theoretic \emph{types} rather than non-commutative laws (which are equivalent to \emph{quantifier-free types}).  First, we remind the reader of the basic model-theoretic setup.
	
	\subsection{Model theory and types} \label{subsec:modelintro}
	
	The work of \cite{FHS2013,FHS2014,FHS2014b} put tracial $\mathrm{W}^*$-algebras into the framework of continuous model theory of \cite{BYBHU2008,BYU2010}.  The authors constructed a language $\cL_{\tr}$ for tracial $\mathrm{W}^*$-algebras as well as a set of axioms $\rT_{\tr}$ that characterize when an $\cL_{\tr}$-structure comes from a tracial $\mathrm{W}^*$-algebra.  
	
	We will follow the treatment in \cite{FHS2014} which introduces ``domains of quantification'' to cut down on the number of ``sorts'' neeeded.  A language can have many sorts, and each sort can have many domains of quantification.  The function and relation symbols come with a given modulus of uniform continuity for each product of domains of quantification.  For tracial $\mathrm{W}^*$-algebras, we have a single sort and the domains of quantification are operator-norm balls $D_r$ for $r \in (0,\infty)$. (Although the original paper used only integer values of $r$, we choose here to work with arbitrary positive values, which changes the setup of the languages but does not affect the proofs.)
	
	Model-theoretic notions such as terms and formulas correspond to familiar objects in von Neumann algebras and free probability:
	\begin{itemize}
		\item Terms in $\cL_{\tr}$ are expressions obtained from iterating scalar multiplication, addition, multiplication, and the $*$-operation on variables and the unit symbol $1$.  If $(M,\tau)$ is a tracial $\mathrm{W}^*$-algebra, then the interpretation of the term in $\mathcal{M}$ is a function represented by a $*$-polynomial.
		\item In $\mathcal{L}_{\tr}$, a basic formula can take the form $\re \tr(f)$ or $\im \tr(f)$ where $f$ is an expression obtained by iterating the $*$-algebraic operations.  Thus, when evaluated in a tracial $\mathrm{W}^*$-algebra, it corresponds to the real or imaginary part of the trace of a non-commutative $*$-polynomial.
		\item Quantifier-free formulas can be obtained by applying continuous functions (``connectives'') to basic formulas.  When the connectives are polynomial, this relates to an object in the free probability and random matrix literature called a \emph{trace polynomial}.
	\end{itemize}
	General formulas are obtained from basic formulas recursively by applying such connectives and also taking the supremum or infimum in some variable over some domain of quantification.  Although general formulas have not been studied much in free probability, our goal is to show that such a study is natural and worthwhile.
	
	For convenience, we will assume that our formulas do not have two copies of the same variable (i.e. if a variable is bound to a quantifier, there is no other variable of the same name that is free or bound to a different quantifier).  For instance, in the formula
	\[
	\im \tr(x_1) \sup_{x_1 \in D_1} \re \tr(x_1x_2 + x_3 x_1^*),
	\]
	the first occurrence of $x_1$ is free while the latter two occurrences are bound to the quantifier $\sup_{x_1 \in D_1}$, but we can rewrite this formula equivalently as
	\[
	\im \tr(x_1) \sup_{y_1 \in D_1} \re \tr(y_1x_2 + x_3 y_1^*).
	\]
	We will typically denote the free variables by $(x_i)_{i \in \N}$ and the bound variables by $(y_i)_{i \in \N}$.  Lowercase letters will be used for formal variables while uppercase letters will be used for individual operators in operator algebras.
	
	For the most part, we will use formulas in finite or countably many free variables.  Thus, we will consider an index set $I$ (often finite or countable), an $I$-tuple $\bS$ of sorts, and an $I$-tuple $\mathbf{x} = (x_i)_{i \in I}$ of variables in those sorts.  We denote by $\cF_{\bS}$ the set of formulas with free variables $(x_i)_{i \in I}$ where $x_i$ is from $S_i$.  Since tracial $\mathrm{W}^*$-algebras have only one sort, we will use the notation $\cF_I$ in this case.
	
	Given an $\cL$-structure $\cM$ and an $I$-tuple $\bX \in \prod_{i \in I} S_i^{\cM}$, each formula $\phi$ has an interpretation $\phi^{\cM}$ which can then be evaluated at $\bX$ to yield $\phi^{\cM}(\bX)$.  The \emph{type} of $\bX$ in $\cM$ is the map
	\[
	\tp^{\cM}(\bX): \cF_{\bS} \to \R, \ \phi \mapsto \phi^{\cM}(\bX).
	\]
	
	In the case of tracial $\mathrm{W}^*$-algebras, this is very similar to how a non-commutative law of $\bX$ is defined through the evaluation of the trace of a non-commutative polynomial on $\bX$.  Indeed, the non-commutative law of $\bX$ is obtained by restricting the linear functional $\tp^{\cM}(\bX)$ to the set of basic formulas inside of $\cF_I$.  Since the value of basic formulas at $\bX$ uniquely determines the value of all quantifier-free formulas, the non-commutative law contains the same information as the evaluation of quantifier-free formulas on $\bX$, namely the \emph{quantifier-free type} $\tp_{\qf}^{\cM}(\bX)$.
	
	Therefore, from a probabilistic viewpoint, the type of a tuple is an enrichment of the non-commutative $*$-law, which in turn is an analog of a classical probability distribution.  One could also think of the type of a $d$-tuple in tracial $\mathrm{W}^*$-algebras as an analog of the type of classical random variables described in \cite{BY2012}.  However, it turns out that the theory of atomless classical probability spaces admits quantifier elimination (see \cite[Example 4.3]{BYU2010} and \cite[Fact 2.10]{BY2012}
	), and so there is no practical distinction between the type and the quantifier-free type.  We will state and prove this result about quantifier elimination in the framework of commutative tracial $\mathrm{W}^*$-algebras (see Theorem \ref{thm:commutativeQE}), showing that the $\cL_{\tr}$-structure $L^\infty[0,1]$, with the trace given by Lebesgue measure, admits quantifier elimination.  Hence the type of a $d$-tuple in $L^\infty[0,1]$ (or any model of the same theory) is uniquely determined by its quantifier-free type.   Therefore, it is worth asking whether in the non-commutative setting the full type might in some sense be a better analog for a probability distribution than the $*$-law itself.  We discuss this idea further in \S \ref{sec:optimization}.
	
	But if the type is to be viewed as an analog of the classical probability distribution, then we would want corresponding notions of independence and of entropy.  This idea provides additional motivation for our development of the analog of Voiculescu's entropy $\chi^{\cU}$ for types.  However, we will not resolve the problem of finding a corresponding notion of independence; we give some suggestions and challenges in \S \ref{sec:independence}.

	\subsection{Outline}
	
	\S \ref{sec:entropyfortypes} defines free entropy $\chi_{\full}^{\cU}(\mu)$ for a full (or complete) type $\mu$.  Then in \S \ref{sec:qfexists}, we explain how the analogous construction for quantifier-free types agrees with Voiculescu's original $\chi^{\cU}$, and furthermore how the analogous quantity $\chi_{\exists}^{\cU}$ for existential types agrees with Voiculescu's entropy in the presence.  The section conclude with the proof of Theorem \ref{thm:entropyembedding}.  \S\ref{sec:entropyfortypes} and \S \ref{sec:qfexists} are closely parallel to \cite{JekelCoveringEntropy}, which handles the full-type analog of the $1$-bounded entropy of Jung \cite{Jung2007S1B} and Hayes \cite{Hayes2018}.
	
	In \S \ref{sec:independence}, we consider the problem of defining an analog of free independence for types rather than for non-commutative laws.  We sketch several approaches to independence (free products, random matrix theory, and model theory), and state many open questions based on these approaches.
	
	In \S \ref{sec:optimization}, we discuss several open problems in free probability involving optimization (i.e.\ $\sup$'s and $\inf$'s), motivated by attempts to adapt the classical theory about optimal transport, entropy, and Gibbs $*$-laws to the non-commutative setting.  We suggest ways in which the model-theoretic framework could bring further insight into these issues.
	
	\subsection*{Acknowledgements}
	
	I thank Isaac Goldbring for suggesting for me to write about this topic and for offering detailed comments.  Adrian Ioana brought to my attention the problem of using random matrix theory to construct embeddings into matrix ultraproducts that have trivial relative commutant, as well as von Neumann's work on almost commuting matrices.  This question was further motivated by various discussions by Sorin Popa about the factorial relative commutant embedding problem.
	
	\section{Types, definable functions, and quantifier elimination}
	
	\subsection{Types and definable predicates}
	
	We use the following notation for concepts in continuous model theory:
	\begin{itemize}
		\item $\cL$ denotes a language.
		\item $\cS$ denotes the collection of sorts in $\cL$.
		\item For each sort $S$, $\cD_S$ denotes the set of domains of quantification in $S$.
		\item $\cM$ will be an $\cL$-structure.
		\item We denote the interpretation of sorts, domains, relation symbols, function symbols, etc. in $\cM$ using a superscript $\cM$.
		\item We usually use $\rT$ for theories.
		\item $\cL_{\tr}$ will be the language of tracial $\mathrm{W}^*$-algebras and $\rT_{\tr}$ will be the theory of tracial $\mathrm{W}^*$-algebras given by \cite{FHS2014}.
		\item A tracial $\mathrm{W}^*$-algebra is a pair $\cM = (M,\tau)$ where $M$ is a $\mathrm{W}^*$-algebra and $\tau$ is a faithful normal tracial state on $M$.
		\item $\cL_{\tr}$ has only one sort; we do not explicitly name this sort, but its interpretation in $\cM$ will simply be denoted $M$.
	\end{itemize}
	
	\begin{definition}
		Let $I$ be an index set, and let $\mathbf{S} = (S_i)_{i \in I}$ be an $I$-tuple of sorts in a language $\mathcal{L}$.  Let $\cF_{\mathbf{S}}$ be the space of $\cL$-formulas with free variables $(x_i)_{i \in I}$ with $x_i$ from the sort $S_i$.  If $\cM$ is an $\cL$-structure and $\mathbf{X} \in \prod_{i \in I} S_i^{\cM}$, then the \emph{type} of $\mathbf{X}$ is the map
		\[
		\tp^{\cM}(\mathbf{X}): \cF_{\mathbf{S}} \to \R
		\]
		given by
		\[
		\phi \mapsto \phi^{\cM}(\mathbf{X}).
		\]
	\end{definition}
	
	\begin{remark}
		In model theory, one often studies the type of a tuple \emph{over $\cA \subseteq \cM$}, which records the values of formulas with coefficients in $\cA$ (see \cite[\S 8]{BYBHU2008}).  For most of this paper, we take $\cA = \varnothing$.
	\end{remark}
	
	\begin{definition}
		Let $\mathbf{S} = (S_i)_{i \in I}$ be an $I$-tuple of sorts in $\mathcal{L}$, and let $\rT$ be an $\cL$-theory.  We denote by $\mathbb{S}_{\bS}(\rT)$ the set of types $\tp^{\cM}(\bX)$ for all $\bX \in \prod_{i \in I} S_i^{\cM}$ for all $\cM \models \rT$.  Similarly, if $\mathbf{D} \in \prod_{i \in I} \mathcal{D}_{S_i}$, then we denote by $\mathbb{S}_{\bD}(\rT)$ the set of types $\tp^{\cM}(\bX)$ of all $\bX \in \prod_{i \in I} D_i^{\cM}$ for all $\cM \models \rT$.
	\end{definition}
	
	The space of types $\mathbb{S}_{\bD}(\rT)$ is equipped with a weak-$*$ topology as follows.  If $\bS$ is an $I$-tuple of $\cL$-sorts, the set $\cF_{\bS}$ of formulas defines a real vector space.  For each $\cL$-structure $\cM$ and $\bX \in \prod_{i \in I} S_j^{\cM}$, the type $\tp^{\cM}(\bX)$ is a linear map $\cF_{\bS} \to \R$.  Thus, for each $\cL$-theory $\rT$ and $\bD \in \prod_{i \in I} \cD_{S_i}$, the space $\mathbb{S}_{\bD}(\rT)$ is a subset of the real dual $\cF_{\mathbf{S}}^\dagger$.  We equip $\mathbb{S}_{\bD}(\rT)$ with the weak-$*$ topology (also known as the \emph{logic topology}).
	
	$\mathbb{S}_{\bD}(\rT)$ is compact in this topology \cite[Proposition 8.6]{BYBHU2008}.  Moreover, if $I$ is at most countable and the language is separable, then $\mathbb{S}_{\bD}(\rT)$ is metrizable.  In particular, this applies to $\cL_{\tr}$ and $\rT_{\tr}$ (see \cite[Observations 3.13 and 3.14]{JekelCoveringEntropy} for further explanation).
	
	In the setting with domains of quantification, it is also convenient to have a topology on $\mathbb{S}_{\bS}(\rT)$.  We say that $\cO \subseteq \mathbb{S}_{\bS}(\rT)$ is \emph{open} if $\cO \cap \mathbb{S}_{\bD}(\rT)$ is open for every $\bD \in \prod_{j \in \N} \cD_{S_j}$; this defines a topology on $\mathbb{S}_{\bS}(\rT)$, which we will also call the \emph{logic topology}.  One can then show that the inclusion map $\mathbb{S}_{\bD}(\rT) \to \mathbb{S}_{\bS}(\rT)$ is a topological embedding \cite[Observation 3.6]{JekelCoveringEntropy}.
	
	Every formula defines a continuous function on $\mathbb{S}_{\bS}(\rT)$ for each theory $\rT$, but the converse is not true.  The objects that correspond to continuous functions on $\mathbb{S}_{\bS}(\rT)$ are a certain completion of the set of formulas, called \emph{definable predicates}.  Our approach to the definition will be semantic rather than syntactic, defining these objects immediately in terms of their interpretations.
	
	\begin{definition} \label{def:definablepredicate}
		Let $\mathcal{L}$ be a language and $\rT$ an $\mathcal{L}$-theory.  A \emph{definable predicate relative to $\rT$} is a collection of functions $\phi^{\cM}: \prod_{i \in I} S_i^{\cM} \to \R$ (for each $\cM \models \rT$) such that for every collection of domains $\mathbf{D} = (D_j)_{j \in \N}$ and every $\epsilon > 0$, there exists a finite $F \subseteq I$ and an $\mathcal{L}$-formula $\psi(x_i: i \in F)$ such that whenever $\cM \models \rT$ and $\mathbf{X} \in \prod_{j \in \N} D_j^{\cM}$, we have
		\[
		|\phi^{\cM}(\mathbf{X}) - \psi^{\cM}(X_i: i \in F)| < \epsilon.
		\]
	\end{definition}
	
	Here we describe definable predicates relative to an arbitrary theory $\rT$ because we want to study definable predicates that make sense uniformly for all models of some theory $\rT$ (for instance $\rT_{\tr}$ in $\cL_{\tr}$) rather than merely for a single $\cL$-structure or for all $\cL$-structures.  In our definition, two distinct formulas may result in the same definable predicate.
	
	In \cite[Theorem 9.9]{BYBHU2008}, the authors show that definable predicates correspond to continuous functions on the space of types.  This result adapts to the setting with multiple sorts and domains of quantification as well (in fact, our definition of the logic topology on $\mathbb{S}_{\bS}(\rT)$ was chosen so that this would work!).  For proof, see \cite[Proposition 3.9]{JekelCoveringEntropy} (although this proposition assume countable index sets, removing this restriction does not affect the argument).
	
	\begin{proposition}[{\cite[Observation 3.11]{JekelCoveringEntropy}}] \label{prop:defpredcontinuous}
		Let $\cL$ be a language and $\rT$ an $\cL$-theory.  Let $\phi$ be a collection of functions $\phi^{\cM}: \prod_{i \in I} S_i^{\cM} \to \R$ for each $\cM \models \rT$.  The following are equivalent:
		\begin{enumerate}[(1)]
			\item $\phi$ is a definable predicate relative to $\rT$.
			\item There exists a continuous $\gamma: \mathbb{S}_{\bS}(\rT) \to \R$ such that $\phi^{\cM}(\bX) = \gamma(\tp^{\cM}(\bX))$ for all $\cM \models \rT$ and $\bX \in \prod_{i \in I} S_i^{\cM}$.
		\end{enumerate}
	\end{proposition}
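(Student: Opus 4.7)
The plan is to prove the equivalence in two directions: for (2) $\Rightarrow$ (1), I would use Stone--Weierstrass on each compact type space $\mathbb{S}_{\bD}(\rT)$, while for (1) $\Rightarrow$ (2), I would first factor $\phi$ through the type map and then check continuity domain by domain.

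For (2) $\Rightarrow$ (1), fix a domain tuple $\bD = (D_i)_{i \in I}$ and $\epsilon > 0$, and restrict the given $\gamma$ to the compact Hausdorff space $\mathbb{S}_{\bD}(\rT)$. Let $\cA \subseteq C(\mathbb{S}_{\bD}(\rT), \R)$ be the set of evaluation maps $p \mapsto p(\psi)$ as $\psi$ ranges over formulas in $\cF_{\bS}$. Since the continuous-logic connectives include addition, multiplication, constants, and the lattice operations $\max$ and $\min$, the set $\cA$ is a unital subalgebra of $C(\mathbb{S}_{\bD}(\rT), \R)$ closed under the lattice structure. It separates points because every $\psi \in \cF_{\bS}$ has only finitely many free variables and two distinct types in $\mathbb{S}_{\bD}(\rT)$ must differ on some such $\psi$. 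Stone--Weierstrass then produces a formula $\psi$, with free variables in some finite $F \subseteq I$, such that $|\gamma(p) - p(\psi)| < \epsilon$ uniformly on $\mathbb{S}_{\bD}(\rT)$; evaluating at $p = \tp^{\cM}(\bX)$ yields the required estimate $|\phi^{\cM}(\bX) - \psi^{\cM}(X_i : i \in F)| < \epsilon$.

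For (1) $\Rightarrow$ (2), I first show that $\phi^{\cM}(\bX)$ depends only on $\tp^{\cM}(\bX)$. Given $\bX, \bX'$ with equal types in (possibly distinct) models $\cM, \cM'$ of $\rT$, pick domain tuples containing each and enlarge to a common $\bD''$ using the directedness of each $\cD_{S_i}$. For any $\epsilon > 0$ the definability hypothesis applied to $\bD''$ produces a finite-variable formula $\psi$ with $|\phi - \psi| < \epsilon$ on $\prod D_i''$ in every model, so equality of types forces $\psi^{\cM}(\bX) = \psi^{\cM'}(\bX')$, hence $|\phi^{\cM}(\bX) - \phi^{\cM'}(\bX')| < 2\epsilon$; letting $\epsilon \to 0$ gives equality. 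This yields a well-defined $\gamma: \mathbb{S}_{\bS}(\rT) \to \R$ with $\phi^{\cM} = \gamma \circ \tp^{\cM}$. To check continuity, by the definition of the logic topology on $\mathbb{S}_{\bS}(\rT)$ it suffices that $\gamma|_{\mathbb{S}_{\bD}(\rT)}$ be continuous for each $\bD$; on $\mathbb{S}_{\bD}(\rT)$ the approximating $\psi$ defines a continuous function essentially by construction of the weak-$*$ topology, and $\gamma$ is uniformly within $\epsilon$ of it, so $\gamma|_{\mathbb{S}_{\bD}(\rT)}$ is a uniform limit of continuous functions and hence continuous. The step most deserving of care is the common-enlargement move for the two domain tuples, which relies on each $\cD_{S_i}$ being directed, together with the verification that the formulas form a point-separating lattice-closed unital subalgebra---routine in the standard formulation but essential for Stone--Weierstrass to apply.
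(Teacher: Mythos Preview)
Your proof is correct and follows the standard Stone--Weierstrass argument. Note, however, that the paper does not actually supply its own proof of this proposition: it states the result with a citation to \cite[Observation 3.11]{JekelCoveringEntropy} and, in the surrounding text, refers the reader to \cite[Theorem 9.9]{BYBHU2008} and \cite[Proposition 3.9]{JekelCoveringEntropy} for the argument. Your write-up is essentially a self-contained version of what those references do, so there is no meaningful divergence in approach to report.

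One small remark: in the $(1)\Rightarrow(2)$ direction, the ``common enlargement'' step can be streamlined. Once you know that two tuples $\bX$, $\bX'$ have the same type, you may simply fix any domain tuple $\bD$ with $\bX \in \prod_i D_i^{\cM}$ and observe that membership of the type in $\mathbb{S}_{\bD}(\rT)$ already forces $\bX'$ to lie in $\prod_i D_i^{\cM'}$ (in the tracial $\mathrm{W}^*$-setting this is because the operator norm is determined by moments; in general frameworks one arranges the domains so that this holds). Your directedness argument is also fine and is the version that works uniformly across formulations of continuous logic with domains of quantification.
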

	
	Just like formulas, definable predicates satisfy a certain uniform continuity property with respect to the metric $d^{\cM}$.
	
	\begin{observation} \label{obs:defpredunifcont}
		If $\phi = (\phi^{\cM})$ is a definable predicate in sorts $\bS$ over $\mathcal{L}$ relative to $\rT$, then $\phi$ satisfies the following uniform continuity property:
		
		For every $\mathbf{D} \in \prod_{i \in I} \mathcal{D}_{S_i}$ and $\epsilon > 0$, there exists a finite $F \subseteq I$ and $\delta > 0$ such that, whenever $\mathcal{M} \models \rT$ and $\mathbf{X}, \mathbf{Y} \in \prod_{j \in \N} D_j^{\cM}$,
		\[
		d^{\cM}(X_i,Y_i) < \delta \text{ for all } i \in F \implies |\phi^{\cM}(\mathbf{X}) - \phi^{\cM}(\mathbf{Y})| < \epsilon.
		\]
		Moreover, for every $\mathbf{D} \in \prod_{i \in O} \mathcal{D}_{S_i}$, there exists a constant $C$ such that $|\phi^{\cM}| \leq C$ for all $\cM \models \rT$.
	\end{observation}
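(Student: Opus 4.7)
The plan is to reduce both assertions to the corresponding facts for ordinary formulas, which are built into the syntax of continuous logic, and then transfer them through the uniform approximation property in Definition \ref{def:definablepredicate}.

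For the uniform continuity statement, fix $\mathbf{D} = (D_i)_{i \in I}$ and $\epsilon > 0$. Using Definition \ref{def:definablepredicate} with tolerance $\epsilon/3$, I pick a finite $F \subseteq I$ and a formula $\psi(x_i : i \in F)$ such that $|\phi^{\cM}(\bX) - \psi^{\cM}(X_i : i \in F)| < \epsilon/3$ for every $\cM \models \rT$ and $\bX \in \prod_j D_j^{\cM}$. The next step is to invoke the uniform continuity of the formula $\psi$ on $\prod_{i \in F} D_i^{\cM}$ with a modulus that is independent of $\cM$. This is proved by induction on the complexity of $\psi$: atomic subformulas inherit a modulus from the moduli built into the function and relation symbols; applying a connective composes continuous moduli; and a $\sup$ or $\inf$ quantifier over a domain of quantification preserves the modulus in the remaining free variables (since $|\sup_y f(x,y) - \sup_y f(x',y)| \leq \sup_y |f(x,y) - f(x',y)|$). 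This yields $\delta > 0$ such that if $d^{\cM}(X_i, Y_i) < \delta$ for every $i \in F$, then $|\psi^{\cM}(X_i : i \in F) - \psi^{\cM}(Y_i : i \in F)| < \epsilon/3$. A triangle inequality then gives $|\phi^{\cM}(\bX) - \phi^{\cM}(\bY)| < \epsilon$.

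For the boundedness statement, apply Definition \ref{def:definablepredicate} once more with tolerance $1$ to obtain a formula $\psi$ with $|\phi^{\cM} - \psi^{\cM}| < 1$ on $\prod_j D_j^{\cM}$ for every $\cM \models \rT$. The formula $\psi$ is built from atomic formulas (which are bounded on the relevant product of domains by the boundedness assumption on relation symbols) by continuous connectives (which are continuous maps between bounded intervals in $\R$) and by $\sup$ and $\inf$ quantifiers over domains of quantification (which preserve bounds). By induction on complexity, $\psi$ is uniformly bounded on $\prod_i D_i^{\cM}$ by some constant $C_0$ independent of $\cM$, and therefore $|\phi^{\cM}| \leq C_0 + 1$ on $\prod_i D_i^{\cM}$.

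The only step with any subtlety is the inductive proof that formulas have a modulus of uniform continuity and a uniform bound on $\prod_{i \in F} D_i^{\cM}$ that does not depend on the particular model $\cM$. This is the heart of continuous first-order logic with domains of quantification as set up in \cite{FHS2014,BYBHU2008}, so I would cite it rather than redo the induction in detail; I expect the writeup to consist mostly of the approximation-plus-triangle-inequality argument outlined above.
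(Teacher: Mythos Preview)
Your argument is correct and is exactly the standard one: approximate $\phi$ uniformly on $\prod_i D_i^{\cM}$ by a formula in finitely many variables, invoke the model-independent modulus of uniform continuity and bound that formulas carry by construction in continuous logic, and finish with the triangle inequality. There is nothing to compare it against, since the paper records this as an Observation without proof, treating it as a routine consequence of the definitions (as in \cite{BYBHU2008,FHS2014}); your write-up is precisely the justification one would supply if pressed.
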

	
	\begin{corollary}
		Let $\cL$ be a language, $\cM$ a model of a theory $\rT$, let $\bS$ be an $I$-tuple of sorts, and let $\bD \in \prod_{i \in I} \cD_{S_i}$.  Let $J$ be a directed set, let $(\bX^{(j)})_{j \in J}$ be a net of $I$-tuples from $\prod_{i \in I} D_i^{\cM}$, and let $\bX \in \prod_{i \in I} D_i^{\cM}$.  If $\lim_{j \in J} X_i^{(j)} = X_i$ for each $i\in I$, then $\lim_{j \in J} \tp^{\cM}(\bX^{(j)}) = \tp^{\cM}(\bX)$.
	\end{corollary}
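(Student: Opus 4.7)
The plan is to unfold the definition of the logic topology and then appeal directly to the uniform continuity property established in Observation \ref{obs:defpredunifcont}. Since $\mathbb{S}_{\bD}(\rT)$ carries the subspace topology inherited from the weak-$*$ topology on the real dual $\cF_{\bS}^\dagger$, a net of types converges if and only if it converges pointwise on $\cF_{\bS}$. Thus it suffices to fix an arbitrary formula $\phi \in \cF_{\bS}$ and verify that $\phi^{\cM}(\bX^{(j)}) \to \phi^{\cM}(\bX)$.

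Given $\phi$ and $\epsilon > 0$, I would invoke Observation \ref{obs:defpredunifcont} applied to the tuple of domains $\bD$ (since any formula is in particular a definable predicate) to produce a finite set $F \subseteq I$ and a $\delta > 0$ such that whenever $\bU, \bV \in \prod_{i \in I} D_i^{\cM}$ satisfy $d^{\cM}(U_i, V_i) < \delta$ for all $i \in F$, one has $|\phi^{\cM}(\bU) - \phi^{\cM}(\bV)| < \epsilon$. By the hypothesis, $X_i^{(j)} \to X_i$ in $D_i^{\cM}$ for every $i \in F$, and because $F$ is finite and $J$ is directed, there exists $j_0 \in J$ such that for all $j \geq j_0$ one has $d^{\cM}(X_i^{(j)}, X_i) < \delta$ simultaneously for every $i \in F$. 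For such $j$, $|\phi^{\cM}(\bX^{(j)}) - \phi^{\cM}(\bX)| < \epsilon$, establishing the required pointwise convergence.

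There is essentially no obstacle here: the corollary is little more than a repackaging of the fact that formulas are uniformly continuous in finitely many coordinates and that the logic topology is exactly the topology of pointwise evaluation of formulas. The only point worth flagging is the need to intersect finitely many ``approximation events'' $\{d^{\cM}(X_i^{(j)},X_i) < \delta\}$ at once; this is handled by the directedness of $J$ together with the finiteness of $F$, and requires no countability assumption on $J$ or on $I$.
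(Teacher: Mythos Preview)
Your proposal is correct and is precisely the argument the paper intends: the corollary is stated immediately after Observation \ref{obs:defpredunifcont} without proof because it follows directly from that uniform continuity property together with the definition of the logic topology as pointwise convergence on $\cF_{\bS}$. Your writeup simply makes explicit the routine $\epsilon$--$\delta$ unpacking, including the use of directedness of $J$ and finiteness of $F$ to handle the simultaneous approximation in finitely many coordinates.
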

	
	Finally, we recall that there are quantifier-free versions of all the notions in this section, which are defined analogously except with quantifier-free formulas instead of all formulas; in particular:
	\begin{itemize}
		\item Let $\cF_{\qf,\bS}$ denote the set of quantifier-free formulas (i.e.\ those defined without using any $\sup$ or $\inf$) in variables from an $I$-tuple $\bS$ of sorts.  Given $\bX \in \prod_{i \in I} S_i^{\cM}$, the \emph{quantifier-free type} $\tp_{\qf}^{\cM}(\bX)$ is the map $\cF_{\qf,\bS} \to \R$ given by $\phi \mapsto \phi^{\cM}(\bX)$.
		\item Given $\bD \in \prod_{i \in I} \cD_{S_i}$, we denote by $\mathbb{S}_{\qf,\bD}(\rT)$ the space of quantifier-free types of elements from $\prod_{i \in I} D_i^{\cM}$ for any $\cM \models \rT$.  We equip it with the weak-$*$ topology as a subset of the dual of $\cF_{\qf,\bS}$.
		\item We denote the set of all quantifier-free types of tuples from $\bS$ by $\mathbb{S}_{\bS}(\rT)$.  As in the case of $\mathbb{S}_{\bS}(\rT)$, we equip $\mathbb{S}_{\qf,\bS}(\rT)$ with the ``union topology.''
		\item A \emph{quantifier-free definable predicate} relative to an $\cL$-theory $\rT$ is defined analogously to Definition \ref{def:definablepredicate} except that now the formula $\psi$ is required to be a quantifier-free formula.
		\item Analogous to Proposition \ref{prop:defpredcontinuous}, quantifier-free definable predicates correspond to continuous functions on $\mathbb{S}_{\qf,\bS}(\rT)$.
	\end{itemize}
	
	The relationship between types and quantifier-free types is as follows (we will need this to relate our model-theoretic version of $\chi$ with Voiculescu's).  The quantifier-free type $\tp_{\qf}^{\cM}(\bX)$ is the restriction of $\tp^{\cM}(\bX)$ from $\cF_{\bS}$ to $\cF_{\qf,\bS}$.  The restriction operation defines a map $\pi_{\qf}: \mathbb{S}_{\bS}(\rT) \to \mathbb{S}_{\qf,\bS}(\rT)$.  It is immediate that for each $I$-tuple $\bD$ of domains, the map $\pi_{\qf}$ is weak-$*$ continuous and surjective $\mathbb{S}_{\bD}(\rT) \to \mathbb{S}_{\bS}(\rT)$, and since the domain and codomain are Hausdorff it is therefore a topological quotient map.  By definition of the topology on $\mathbb{S}_{\bS}(\rT)$, it follows that $\pi_{\qf}$ is continuous $\mathbb{S}_{\bS}(\rT) \to \mathbb{S}_{\qf,\bS}(\rT)$, and furthermore it is a topological quotient map.
	
	\subsection{Definable functions}
	
	Here we summarize a few definitions and facts about definable functions needed to establish the properties of entropy in \S \ref{sec:qfexists}.  In particular, we recall the result from \cite{JekelCoveringEntropy} that for an $I$-tuple $\bX$ in a tracial $\mathrm{W}^*$-algebra, every element $Y \in \mathrm{W}^*(\bX)$ can be expressed as $Y = f(\bX)$ for some quantifier-free definable function $f$.
	
	\begin{definition} \label{def:definablefunctions}
		Let $\cL$ be a language, $\rT$ an $\cL$-theory, $I$ an index set, $\bS$ an $I$-tuple of sorts and $S'$ another sort.  A \emph{definable function} $f: \prod_{i \in I} S_i \to S'$ relative to $\rT$ is a collection of functions $f^{\cM}: \prod_{i \in I} S_i^{\cM} \to (S')^{\cM}$ for each $\cM \models \rT$ satisfying the following conditions:
		\begin{enumerate}
			\item For every $\bD \in \prod_{i \in I} \cD_{S_i}$, there exists $D' \in \cD_{S'}$ such that each $f^{\cM}$ maps $\prod_{i \in I} \cD_{S_i}^{\cM}$ into $D_{S'}^{\cM}$.
			\item The function $\phi^{\cM}(\bX,Y) = d^{\cM}(f^{\cM}(\bX),\bY)$ for $\cM \models \rT$ is a definable predicate in variables from the sorts $(\bS,S')$.
		\end{enumerate}
	\end{definition}
	
	It is convenient to consider also definable functions where the output is a tuple.  A \emph{definable function} $\mathbf{f}: \prod_{i \in I} S_i \to \prod_{j \in I'} S_j'$ relative to $\rT$ is simply a tuple where each $f_j$ is definable.  We also have the following characterization (or alternative definition if you will) of definable functions in terms of how they interact with definable predicates.  For proof of this equivalence, see \cite[Proposition 3.17]{JekelCoveringEntropy} (although the statement there was only given for countable index sets, it generalizes to arbitrary index sets without any essential changes).
	
	\begin{proposition} \label{prop:definablefunction}
		Let $\bS$ and $\bS'$ be $I$ and $I'$-tuples of sorts in the language $\cL$.  Consider a collection of maps $\mathbf{f}^{\cM}: \prod_{j \in \N} S_j^{\cM} \to \prod_{j \in \N} (S_j')^{\cM}$ for $\cM \models \rT$.  Then $\mathbf{f}$ is a definable function $\prod_{i \in I} S_i \to \prod_{i \in I'} S_i'$ relative to the $\cL$-theory $\rT$ if and only if the following conditions hold:
		\begin{enumerate}[(1)]
			\item For each $\bD \in \prod_{j \in \N} \cD_{S_j}$, there exists $\bD' \in \prod_{j \in \N} \cD_{S_j'}$ such that for every $\cM \models \rT$, $\mathbf{f}^{\cM}$ maps $\prod_{j \in \N} D_j^{\cM}$ into $\prod_{j \in \N} (D_j')^{\cM}$.
			\item Whenever $\tilde{\bS}$ is an $\tilde{I}$-tuple of sorts and $\phi$ is a definable predicate relative to $\rT$ in the free variables $x_j' \in S_j'$ for $j \in I'$ and $\tilde{x}_j \in \tilde{S}_j$ for $j \in \tilde{I}$, then $\phi(\mathbf{f}(\mathbf{x}),\tilde{\mathbf{x}})$ is a definable predicate in the variables $\mathbf{x} = (x_j)_{j\in \N}$ and $\tilde{\mathbf{x}} = (\tilde{x}_j)_{j\in\N}$.
		\end{enumerate}
	\end{proposition}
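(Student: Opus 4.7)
The plan is to prove the two implications separately. Condition (1) and item (1) of Definition \ref{def:definablefunctions} coincide (the tuple version just assembles the coordinate-wise domains), so the real work is relating condition (2) to item (2) of Definition \ref{def:definablefunctions}.

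For the direction (1) and (2) $\Rightarrow$ $\mathbf{f}$ definable, I would verify item (2) of Definition \ref{def:definablefunctions} for each coordinate $f_j$ by applying the hypothesis (2) with $\tilde I = \{*\}$, $\tilde S_* = S_j'$, and $\phi$ equal to the basic predicate $d(x_j', \tilde x_*)$. Then (2) delivers that $(\bX, Y) \mapsto d^{\cM}(f_j^{\cM}(\bX), Y)$ is a definable predicate, which is exactly item (2) of Definition \ref{def:definablefunctions}.

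For the direction $\mathbf{f}$ definable $\Rightarrow$ (2), fix a definable predicate $\phi$, tuples of domains $\bD$ for $\mathbf{x}$ and $\tilde{\bD}$ for $\tilde{\mathbf{x}}$, and $\epsilon > 0$. Choose $\bD'$ as in (1). Approximate $\phi$ within $\epsilon$ on $\prod_j (D_j')^{\cM} \times \prod_i \tilde D_i^{\cM}$ by an $\cL$-formula $\psi$ depending only on finitely many free variables $x_{j_1}', \dots, x_{j_k}', \tilde x_{i_1}, \dots, \tilde x_{i_m}$. For $L > 0$, define the inf-convolution
\[
\psi_L(x_{j_1}', \dots, x_{j_k}', \tilde{\mathbf{x}}) = \inf_{Y_1 \in D_{j_1}', \dots, Y_k \in D_{j_k}'} \Bigl[ \psi(Y_1, \dots, Y_k, \tilde{\mathbf{x}}) + L \sum_{\ell=1}^k d(Y_\ell, x_{j_\ell}') \Bigr].
\]
Then $\psi_L$ is $L$-Lipschitz in each $x_{j_\ell}'$ argument, and by uniform continuity of $\psi$ one has $\psi_L \to \psi$ uniformly on $\prod_\ell D_{j_\ell}' \times \prod_i \tilde D_i$ as $L \to \infty$. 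Substituting $x_{j_\ell}' = f_{j_\ell}(\bX)$ gives
\[
\psi_L(f_{j_1}(\bX), \dots, f_{j_k}(\bX), \tilde{\mathbf{x}}) = \inf_{Y_1 \in D_{j_1}', \dots, Y_k \in D_{j_k}'} \Bigl[ \psi(Y_1, \dots, Y_k, \tilde{\mathbf{x}}) + L \sum_{\ell=1}^k d(Y_\ell, f_{j_\ell}(\bX)) \Bigr].
\]
By the assumption that each $f_{j_\ell}$ is definable, $d(Y_\ell, f_{j_\ell}(\bX))$ is a definable predicate in $(Y_\ell, \bX)$, while $\psi(Y_1, \dots, Y_k, \tilde{\mathbf{x}})$ is an honest formula. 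Taking the bounded-domain infimum of a definable predicate yields a definable predicate (approximate by a formula and apply $\inf$ at the formula level), so the right-hand side is a definable predicate in $(\bX, \tilde{\mathbf{x}})$. Choosing $L$ large enough that $\|\psi_L - \psi\| < \epsilon$ on the relevant domain, and combining with the original $\epsilon$-approximation of $\phi$, we obtain a $2\epsilon$-approximation of $\phi(\mathbf{f}(\bX), \tilde{\mathbf{x}})$ by a definable predicate. Since $\epsilon$ was arbitrary and the space of definable predicates is uniformly closed on bounded domains (a consequence of Proposition \ref{prop:defpredcontinuous} and compactness of the type space), $\phi(\mathbf{f}(\bX), \tilde{\mathbf{x}})$ is itself a definable predicate.

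The main obstacle is the inf-convolution step together with the permissibility of taking infima over bounded domains inside definable predicates. The former is a standard continuous-logic fact (uniformly continuous functions on a bounded metric space are uniformly approximable by Lipschitz ones, and $L$-inf-convolution with $L d$ is the canonical realization). The latter is essentially because if a definable predicate $\Theta$ is $\epsilon$-approximated on a domain by a formula $\psi$, then $\inf_{Y \in D} \psi$ is a formula and $\epsilon$-approximates $\inf_{Y \in D} \Theta$, so the latter remains definable. Both ingredients are routine in continuous logic (compare \cite{BYBHU2008} and \cite{JekelCoveringEntropy}); pulling them together is the entire content of the nontrivial direction.
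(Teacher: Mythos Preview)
Your argument is correct. The easy direction is exactly right: plugging the metric $d(x_j',\tilde x_*)$ into hypothesis (2) recovers item (2) of Definition~\ref{def:definablefunctions} coordinate by coordinate. For the substantive direction, the inf-convolution (forced-Lipschitz) trick is precisely the standard mechanism: reduce to a formula $\psi$ in finitely many coordinates, replace it by an $L$-Lipschitz approximant $\psi_L$ via infimal convolution with $L\sum d$, and then observe that after substituting $x_{j_\ell}'=f_{j_\ell}(\bX)$ the only dependence on $\mathbf f$ is through the terms $d(Y_\ell,f_{j_\ell}(\bX))$, which are definable by hypothesis. Your justifications for $\psi_L\to\psi$ uniformly on the product of domains and for the closure of definable predicates under bounded-domain $\inf$ and under uniform limits are the right ones.

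As for comparison with the paper: the paper does not actually prove this proposition. It simply cites \cite[Proposition~3.17]{JekelCoveringEntropy} and remarks that the countability assumption on the index sets there can be dropped without affecting the argument. Your proof is essentially the argument one finds in that reference (and in the background literature such as \cite{BYBHU2008}); the inf-convolution step is the standard device used there to pass from ``$d(f(\bX),Y)$ is definable'' to ``$\phi(f(\bX),\tilde\bX)$ is definable for all definable $\phi$.'' So there is no meaningful divergence in approach---you have supplied the proof the paper elected to outsource.
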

	
	As a consequence, if $\mathbf{f}$ is a definable function $\prod_{i \in I} S_i \to \prod_{i \in I'} S_i'$ and $\phi$ is a definable predicate in the variables $x_j' \in S_j'$, then $\phi \circ \mathbf{f}$ is a definable predicate (see \cite[Lemma 3.20(1)]{JekelCoveringEntropy}).
	
	We want to define quantifier-free definable functions analogously to Definition \ref{def:definablefunctions} except using quantifier-free definable predicates rather than definable predicates.  Unfortunately, we do not know in general whether the analog of Proposition \ref{prop:definablefunction} holds for the quantifier-free setting for an arbitrary choice of theory $\rT$.  Thus, we prefer to define quantifier-free definable functions by the analog of the properties listed in Proposition \ref{prop:definablefunction} instead.
	
	\begin{definition} \label{def:definablefunction}
		Let $\bS$ and $\bS'$ be $I$ and $I'$-tuples of sorts in the language $\cL$.  Consider a collection of maps $\mathbf{f}^{\cM}: \prod_{j \in \N} S_j^{\cM} \to \prod_{j \in \N} (S_j')^{\cM}$ for $\cM \models \rT$.  Then $\mathbf{f}$ is a \emph{quantifier-free definable function} $\prod_{i \in I} S_i \to \prod_{i \in I'} S_i'$ relative to the $\cL$-theory $\rT$ if and only if the following conditions hold:
		\begin{enumerate}[(1)]
			\item For each $\bD \in \prod_{j \in \N} \cD_{S_j}$, there exists $\bD' \in \prod_{j \in \N} \cD_{S_j'}$ such that for every $\cM \models \rT$, $\mathbf{f}^{\cM}$ maps $\prod_{j \in \N} D_j^{\cM}$ into $\prod_{j \in \N} (D_j')^{\cM}$.
			\item Whenever $\tilde{\bS}$ is an $\tilde{I}$-tuple of sorts and $\phi$ is a quantifier-free definable predicate relative to $\rT$ in the free variables $x_j' \in S_j'$ for $j \in I'$ and $\tilde{x}_j \in \tilde{S}_j$ for $j \in \tilde{I}$, then $\phi(\mathbf{f}(\mathbf{x}),\tilde{\mathbf{x}})$ is a quantifier-free definable predicate in the variables $\mathbf{x} = (x_j)_{j\in \N}$ and $\tilde{\mathbf{x}} = (\tilde{x}_j)_{j\in\N}$.
		\end{enumerate}
	\end{definition}
	
	An important consequence of this definition is that if $\phi$ is a quantifier-free definable predicate and $\mathbf{f}$ is a quantifier-free definable function, and if the domain of $\phi$ is the same as the codomain of $\mathbf{f}$, then $\phi \circ \mathbf{f}$ is a quantifier-free definable predicate.  But in any case, \cite[Theorem 3.30]{JekelCoveringEntropy} shows that the two possible definitions of quantifier-free definable functions proposed above are equivalent in the case of $\cL_{\tr}$ and $\rT_{\tr}$.
	
	Furthermore, it turns out that every element in a $\mathrm{W}^*$-algebra can be expressed as a quantifier-free definable function of the generators.
	
	\begin{proposition}[{\cite[Proposition 3.32]{JekelCoveringEntropy}}] \label{prop:deffuncrealization}
		If $\mathcal{M} = (M,\tau)$ is a tracial $\mathrm{W}^*$-algebra and $\mathbf{X} \in \prod_{i \in I} D_{r_i}$ generates $M$ and $\bY \in \prod_{i \in I'} D_{r_i'}^{\cM}$, then there exists a quantifier-free definable function $\mathbf{f}$ in $\cL_{\tr}$ relative to $\rT_{\tr}$ such that $\bY = \mathbf{f}(\mathbf{X})$.  In fact, $\mathbf{f}$ can be chosen so that $f_k^{\cM}$ maps $M^I$ into $\prod_{i \in I'} D_{r_i'}^{\cM}$ for all $\cM \models \rT$.
	\end{proposition}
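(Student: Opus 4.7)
The plan is to construct $\mathbf{f}$ by approximating each component of $\bY$ with $*$-polynomials in $\bX$ via Kaplansky's density theorem, and then converting these approximations into a genuine quantifier-free definable function using continuous functional calculus truncations that enforce uniform operator-norm bounds across all models of $\rT_{\tr}$. It suffices to construct a single quantifier-free definable scalar function $f$ with $f^{\cM}(\bX) = Y$ for a self-adjoint $Y \in D_{r'}^{\cM}$, since the general case reduces to this by treating components separately and splitting each into its real and imaginary parts.

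First, $*$-polynomials $\bX' \mapsto p(\bX')$ are quantifier-free definable functions directly from the operations of $\cL_{\tr}$. Moreover, if $\psi\colon\R\to\R$ is continuous and $p$ is a self-adjoint $*$-polynomial, then $\bX'\mapsto\psi(p(\bX'))$ is also quantifier-free definable: by Stone--Weierstrass, $\psi$ is a uniform limit of polynomials on any bounded interval, and one checks that the class of quantifier-free definable functions into a fixed operator-norm ball is closed under uniform operator-norm limits. By Kaplansky's density theorem applied to $\C\ip{\bX} \subset \mathrm{W}^*(\bX) = M$, there exist self-adjoint $*$-polynomials $p_n$ with $\norm{p_n(\bX)}_\infty \leq r'$ and $\norm{p_n(\bX)-Y}_2 \to 0$ at any prescribed rate. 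Setting $r_n := p_{n+1}-p_n$ and truncating via $\psi_n(t)=\max(-2^{-n},\min(t,2^{-n}))$, the truncated increment $\psi_n\circ r_n$ has operator norm at most $2^{-n}$ in every model, so $g := p_1 + \sum_n\psi_n\circ r_n$ converges uniformly in operator norm across all $\cN\models\rT_{\tr}$, defining a quantifier-free definable function.

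Chebyshev's inequality applied to the spectral distribution of $r_n(\bX)$ in $\cM$ gives $\norm{r_n(\bX)-\psi_n(r_n(\bX))}_2 \leq \norm{r_n(\bX)}_\infty \cdot \norm{r_n(\bX)}_2 / 2^{-n}$, which using $\norm{r_n(\bX)}_\infty \leq 2r'$ and sufficiently rapid decay of $\norm{r_n(\bX)}_2$ can be made as small as needed in total; thus for any $\eta>0$ I can arrange $\norm{g^{\cM}(\bX)-Y}_2 < \eta$. To achieve equality, I iterate: apply the same construction to the residual $Y - g^{\cM}(\bX) \in \mathrm{W}^*(\bX)$, obtaining $g^{(2)}$ with geometrically smaller uniform operator-norm bound and geometrically smaller residual $\|\cdot\|_2$-error in $\cM$; continuing, $f_0 := \sum_k g^{(k)}$ converges in operator norm uniformly across all models of $\rT_{\tr}$ and satisfies $f_0^{\cM}(\bX)=Y$ exactly by telescoping. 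Finally, $f := \chi_{r'}\circ f_0$ with $\chi_{r'}(t)=\max(-r',\min(t,r'))$ enforces $f^{\cN}(\bX')\in D_{r'}^{\cN}$ for every $\cN$ without altering the value at $\bX$ in $\cM$, since $\norm{Y}\leq r'$.

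The main obstacle is the delicate balance between three quantitative ingredients: the $\|\cdot\|_2$-decay of the polynomial approximations in $\cM$, the operator-norm truncation thresholds (which must be summable for uniform convergence across models), and the $\|\cdot\|_2$-truncation error introduced in $\cM$ by the cutoffs. The iterative correction scheme resolves this by ensuring that each iteration's parameters shrink geometrically in both norms simultaneously, so that the exact value $Y$ is recovered in $\cM$ while uniform quantifier-free definability is preserved across every model of $\rT_{\tr}$.
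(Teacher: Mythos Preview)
Your iterative scheme has a genuine gap that cannot be repaired within the truncation-of-increments framework. You assert that each $g^{(k)}$ can be taken with ``geometrically smaller uniform operator-norm bound'' while the residual $\eta_k := \|Y-\sum_{j\le k} g^{(j)}(\bX)\|_2$ also tends to zero, but you do not justify the first claim, and in fact the two requirements are incompatible. As written, $g^{(k)} = p_1^{(k)} + \sum_n \psi_n \circ r_n^{(k)}$, and the uniform operator norm of the untruncated term $p_1^{(k)}$ in a general model $\cN$ is governed by the coefficients of the polynomial, not by $\|Y_k\|_2$ or $\|Y_k\|_\infty$ in $\cM$; there is no reason it should shrink with $k$. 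If you eliminate that term (take $p_1^{(k)}=0$) or truncate it too, then whatever summable schedule $B_k := \|g^{(k)}\|_\infty$ you choose, the elementary lower bound
\[
\eta_k \;=\; \|Y_k - g^{(k)}(\bX)\|_2 \;\ge\; \min_{\|W\|_\infty\le B_k}\|Y_k - W\|_2 \;=\; \bigl\|\,(|Y_k|-B_k)_+\,\bigr\|_2
\]
(obtained by conditioning $W$ onto $\mathrm{W}^*(Y_k)$) shows the scheme stalls. Concretely, take $Y=P$ a projection: with $p_1^{(k)}=0$ your construction gives $Y_{k+1}\approx (c_k - B_k)P$ where $c_k=\|Y_k\|_\infty$, so $c_{k+1}\ge c_k-B_k$ and hence $c_k\ge 1-\sum_{j<k}B_j$. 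If $\sum_k B_k<\infty$ is arranged to be less than $1$, then $\eta_k = c_k\|P\|_2$ is bounded away from $0$; if $\sum_k B_k\ge 1$, the series $\sum_k g^{(k)}$ does not converge in operator norm uniformly. Either way $f_0$ is not produced as a quantifier-free definable function with $f_0^{\cM}(\bX)=Y$. (Your Chebyshev bound $\|Z-\psi_\epsilon(Z)\|_2 \le \|Z\|_\infty\|Z\|_2/\epsilon$ only makes the arithmetic worse.)

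The paper does not supply a proof here but cites \cite[Proposition 3.32]{JekelCoveringEntropy}, and immediately afterwards singles out as the key and ``more subtle'' step the construction of a one-variable quantifier-free definable function that maps all of $M$ into the unit ball while restricting to the identity there. That retraction --- built via functional calculus in $X^*X$ rather than by truncating self-adjoint increments --- is precisely the ingredient your argument lacks, and the proof in the reference is organized around it.
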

	
	A special case (and one of the key steps in the proof) of this result is that there exists a one-variable quantifier-free definable predicate $f$ such that $f^{\cM}$ maps $M$ into the unit ball and equals the identity on the unit ball (this is more subtle than in the $\mathrm{C}^*$-algebraic setting because the operator norm is not a definable predicate in $\cL_{\tr}$).  By composing with this $f$, one can extend a definable predicate defined only on a product of operator-norm balls to a globally defined definable predicate.  See \cite[Remark 3.33]{JekelCoveringEntropy} for details.
	
	\begin{proposition} \label{prop:extension}
		Let $I$ be an index set and $\mathbf{r} \in (0,\infty)^I$.  Let $\mathbb{S}_{\mathbf{r}}(\rT_{\tr})$ be the set of types for $I$-tuples in $\prod_{i \in I} D_{r_i}^{\cM}$ for $\cM \models \rT_{\tr}$.  Then for every continuous $\phi: \mathbb{S}_{\mathbf{r}} \to \R$, there exists a continuous extension to $\mathbb{S}_I(\rT_{\tr})$, that is, there us a definable predicate $\psi$ such that $\psi|_{\mathbb{S}_{\mathbf{r}}(\rT_{\tr})} = \phi$.
	\end{proposition}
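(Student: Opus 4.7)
The plan is to extend $\phi$ by composing with quantifier-free definable cutoff functions, thereby reducing the global definition to the restricted domain where $\phi$ is given. Two inputs are needed: the domain-restricted analog of Proposition \ref{prop:defpredcontinuous}, which tells us that a continuous function on $\mathbb{S}_{\mathbf{r}}(\rT_{\tr})$ is a uniform limit of $\cL_{\tr}$-formulas when restricted to $I$-tuples in $\prod_{i \in I} D_{r_i}^{\cM}$; and the cutoff construction mentioned after Proposition \ref{prop:deffuncrealization}, which for each $r > 0$ gives a quantifier-free definable function $c_r : S \to D_r$ (relative to $\rT_{\tr}$) with the property that $c_r^{\cM}$ is the identity on $D_r^{\cM}$ for every $\cM \models \rT_{\tr}$.

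First, I would define the candidate extension directly at the level of models: for each $\cM \models \rT_{\tr}$ and $\bX \in M^I$, set
\[
\psi^{\cM}(\bX) \;:=\; \phi^{\cM}\bigl((c_{r_i}^{\cM}(X_i))_{i \in I}\bigr).
\]
This is well-defined because $c_{r_i}^{\cM}(X_i) \in D_{r_i}^{\cM}$, and it agrees with $\phi^{\cM}$ on $\prod_{i \in I} D_{r_i}^{\cM}$ since each $c_{r_i}^{\cM}$ is the identity on $D_{r_i}^{\cM}$.

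Second, I would verify that $\psi$ is a definable predicate in the sense of Definition \ref{def:definablepredicate}. Fix a choice of domains $\bD = (D_{s_i})_{i \in I}$ for the free variables of $\psi$ and an $\epsilon > 0$. By the restricted-domain form of Proposition \ref{prop:defpredcontinuous}, there is a finite $F \subseteq I$ and an $\cL_{\tr}$-formula $\alpha(y_i : i \in F)$ with variables in the sort $S$ such that
\[
\bigl|\phi^{\cM}(\bY) - \alpha^{\cM}(Y_i : i \in F)\bigr| < \epsilon/2
\]
for every $\cM \models \rT_{\tr}$ and every $\bY \in \prod_{i \in I} D_{r_i}^{\cM}$. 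Then $\alpha(c_{r_i}(x_i) : i \in F)$ is a definable predicate in the variables $(x_i)_{i \in F}$ because each $c_{r_i}$ is a quantifier-free definable function and Proposition \ref{prop:definablefunction} ensures that the composition of a definable predicate with a definable function is a definable predicate. Hence on the domain $\bD$ it can be $\epsilon/2$-approximated by an actual formula $\beta$, and the triangle inequality yields $|\psi^{\cM}(\bX) - \beta^{\cM}(X_i : i \in F)| < \epsilon$ for all $\bX \in \prod_{i \in I} D_{s_i}^{\cM}$.

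Having checked both conditions, Proposition \ref{prop:defpredcontinuous} then gives a continuous $\tilde\psi : \mathbb{S}_I(\rT_{\tr}) \to \R$ with $\tilde\psi(\tp^{\cM}(\bX)) = \psi^{\cM}(\bX)$; restricting to types of tuples from $\prod D_{r_i}^{\cM}$ recovers $\phi$. The only genuinely non-obvious step is the composition argument in the second paragraph, and even that is handled by invoking the already-quoted fact that quantifier-free definable functions compose correctly with definable predicates; the rest is bookkeeping.
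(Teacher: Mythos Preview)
Your proposal is correct and follows precisely the approach the paper sketches in the paragraph preceding the proposition: compose with the quantifier-free definable cutoff functions that map into $D_{r_i}$ and restrict to the identity there, then use the definable-predicate machinery to check that the result is globally definable. The paper itself gives no further detail beyond this outline and a reference to \cite[Remark 3.33]{JekelCoveringEntropy}, so your write-up is in fact more explicit than the paper's own treatment.
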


	\subsection{Quantifier elimination in classical probability} \label{subsec:classicalQE}.
	
	A theory $\rT$ is said to admit \emph{quantifier elimination} if for every $d$, every $d$-variable definable predicate with respect to $\rT$ is a quantifier-free definable predicate.  An $\cL$-structure $\cM$ is said to \emph{admit quantifier elimination} if its theory admits quantifier elimination.  In this section, we show that a diffuse classical probability space admits quantifier elimination (in contrast to general tracial $\mathrm{W}^*$-algebras), which gives a model-theoretic heuristic for why non-commutative probability theory is more difficult than classical probability theory.
	
	\begin{theorem} \label{thm:commutativeQE}
		The $\cL_{\tr}$-structure $L^\infty[0,1]$, with the trace $E: L^\infty[0,1] \to \C$ given by integration, admits quantifier elimination.
	\end{theorem}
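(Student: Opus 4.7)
The plan is to apply the standard quantifier-elimination test in continuous model theory: $\rT := \Th(L^\infty[0,1])$ admits QE iff whenever $\cM, \cN \models \rT$ and $\bX \in \cM^d$, $\bY \in \cN^d$ satisfy $\tp_{\qf}^{\cM}(\bX) = \tp_{\qf}^{\cN}(\bY)$, one has $\tp^{\cM}(\bX) = \tp^{\cN}(\bY)$.  First I would identify $\rT$ as the theory of commutative, diffuse tracial $\mathrm{W}^*$-algebras: commutativity and diffuseness are each $\cL_{\tr}$-axiomatizable, and the resulting theory is complete by separable categoricity, the unique separable model (via Maharam's theorem) being $L^\infty[0,1]$ itself.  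For a $d$-tuple $\bX$ in a model $\cM = L^\infty(\Omega)$, the quantifier-free type is determined by the joint $*$-distribution, which by commutativity is the joint distribution of $\bX$ as a $\C^d$-valued bounded random vector.

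Given $\bX, \bY$ with equal joint distributions, I would pass to $\aleph_1$-saturated elementary extensions $\cM^* \succeq \cM$ and $\cN^* \succeq \cN$ (which preserve both quantifier-free and full types) and build an isomorphism $\Phi : \cM^* \to \cN^*$ with $\Phi(X_i) = Y_i$ via a back-and-forth argument.  The initial partial isomorphism is the trace-preserving $*$-isomorphism $\pi_0 : \mathrm{W}^*(\bX) \to \mathrm{W}^*(\bY)$ sending $X_i \mapsto Y_i$, well-defined precisely because the joint distributions agree.  At each stage, given a partial isomorphism $\pi : \cA \to \cB$ between countably generated subalgebras and an element $Z \in \cM^*$ to be added on the left, I must find $Z' \in \cN^*$ such that the joint distribution of $(Z, \cA)$ transports via $\pi$ to that of $(Z', \cB)$; elements of $\cN^*$ are matched symmetrically on the other side.

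The main obstacle is this extension step.  By $\aleph_1$-saturation of $\cN^*$, existence of $Z'$ reduces to approximate satisfiability, for each finite collection of continuous test functions $f$ and parameters $g \in \cB$, of the trace-equation system $\tau(g \cdot f(Z')) = \pi(\tau(\pi^{-1}(g) \cdot f(Z)))$.  I would establish this by producing a $[0,1]$-uniform random variable $U \in \cN^*$ independent of $\cB$; then $Z' = F(\cB, U)$ for an appropriate measurable function $F$ realizing the prescribed regular conditional distribution.  The existence of such $U$ follows from $\aleph_1$-saturation together with the elementary fact that in any atomless probability space, any finitely generated sub-probability algebra admits an independent $[0,1]$-uniform companion.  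Iterating over a countable cofinal family of elements on each side yields an isomorphism $\Phi$ whose existence forces $\tp^{\cM^*}(\bX) = \tp^{\cN^*}(\bY)$, and hence $\tp^{\cM}(\bX) = \tp^{\cN}(\bY)$, completing the QE test.
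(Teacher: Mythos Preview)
Your strategy is different from the paper's and essentially correct, but one step needs repair. The paper goes \emph{down} via L\"owenheim--Skolem to the separable model $L^\infty[0,1]$ and then, instead of any back-and-forth, builds for each $\epsilon>0$ an automorphism $\alpha$ with $\norm{\alpha(\bX)-\bY}_{L^2}<\epsilon$ by partitioning $\{|z|\le r\}^d\subset\C^d$ into measurable pieces of diameter $\le\epsilon$ and matching preimages (which have equal measure since the distributions agree); continuity of $\tp$ in $\norm{\cdot}_2$ then gives $\tp(\bX)=\tp(\bY)$. No saturation or conditional distributions are needed. Your route via $\aleph_1$-saturated extensions is heavier but more overtly model-theoretic and would port to other separably categorical settings.

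The gap is in your last paragraph. A length-$\omega$ back-and-forth inside $\aleph_1$-saturated $\cM^*,\cN^*$ cannot produce an isomorphism $\cM^*\to\cN^*$: these structures are not separable, so there is no ``countable cofinal family of elements.'' What you obtain is an isomorphism between separable substructures, which need not be elementary in $\cM^*,\cN^*$, so equality of the ambient types does not follow. Two fixes: (i) take $\cM^*,\cN^*$ saturated of equal density character and run the back-and-forth transfinitely; or (ii), more cleanly, drop the isomorphism claim and note that the extension property you establish already gives $\tp^{\cM^*}(\bX)=\tp^{\cN^*}(\bY)$ by induction on quantifier depth (the continuous Ehrenfeucht--Fra\"iss\'e argument). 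A smaller point: ``any finitely generated sub-probability algebra admits an independent $[0,1]$-uniform companion'' is false as stated---the identity function on $[0,1]$ generates all of $L^\infty[0,1]$---but \emph{approximate} finite satisfiability, which is what $\aleph_1$-saturation actually requires, does hold, e.g.\ via $L^\infty[0,1]\cong\bigotimes_{\N}L^\infty[0,1]$.
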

	
	This result is closely related to \cite[Example 4.3]{BYU2010} and \cite[Fact 2.10]{BY2012} which showed quantifier elimination for the theory of diffuse classical probability spaces.  
	We will give an argument for quantifier elimination of $L^\infty[0,1]$ directly in the $\mathrm{W}^*$-algebraic framework.  We need several lemmas, the first of which is the characterization of quantifier elimination in terms of types.
	
	\begin{lemma} \label{lem:typeQE}
		Let $\rT$ be a theory in a language $\cL$.  Then $\rT$ admits quantifier elimination if and only if for every $d \in \N$ and $\bS = (S_1,\dots,S_d)$ of sorts, the restriction map $\pi_{\qf}: \mathbb{S}_{\bS}(\rT) \to \mathbb{S}_{\qf,\bS}(\rT)$ is injective (or in other words, an $d$-tuple's type is uniquely determined by its quantifier-free type).
	\end{lemma}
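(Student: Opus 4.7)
The plan is to recast both implications as statements about the continuous surjection $\pi_{\qf}\colon \mathbb{S}_{\bS}(\rT) \to \mathbb{S}_{\qf,\bS}(\rT)$ between compact Hausdorff spaces, using Proposition \ref{prop:defpredcontinuous} (continuous functions on the type space are exactly definable predicates) together with its quantifier-free analog noted in the final bullet list of the previous subsection.

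For the $(\Rightarrow)$ direction, I would assume quantifier elimination and start with two tuples $(\cM, \bX)$ and $(\cM', \bY)$ whose quantifier-free types agree, i.e.\ $\pi_{\qf}(\tp^{\cM}(\bX)) = \pi_{\qf}(\tp^{\cM'}(\bY))$. Given any formula $\phi$ and any common domain tuple $\bD$ containing $\bX$ and $\bY$, quantifier elimination says $\phi$, viewed as a definable predicate relative to $\rT$, is quantifier-free definable, so for each $\epsilon > 0$ there is a quantifier-free formula $\psi$ with $|\phi - \psi| < \epsilon$ on $\bD$ uniformly across models of $\rT$. Since the quantifier-free types coincide, $\psi^{\cM}(\bX) = \psi^{\cM'}(\bY)$, and the triangle inequality forces $|\phi^{\cM}(\bX) - \phi^{\cM'}(\bY)| < 2\epsilon$; letting $\epsilon \to 0$ yields $\tp^{\cM}(\bX) = \tp^{\cM'}(\bY)$, so $\pi_{\qf}$ is injective.

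For the $(\Leftarrow)$ direction, assume $\pi_{\qf}$ is injective on $\mathbb{S}_{\bS}(\rT)$ for every finite tuple of sorts $\bS$. Given a $d$-variable definable predicate $\phi$, a domain tuple $\bD$, and $\epsilon > 0$, I want to produce a quantifier-free formula approximating $\phi$ to within $\epsilon$ on $\bD$. By Proposition \ref{prop:defpredcontinuous}, $\phi$ corresponds to a continuous function $\gamma$ on the compact Hausdorff space $\mathbb{S}_{\bD}(\rT)$. The excerpt records that $\pi_{\qf}$ restricts to a continuous surjection $\mathbb{S}_{\bD}(\rT) \to \mathbb{S}_{\qf,\bD}(\rT)$ between compact Hausdorff spaces; the injectivity hypothesis upgrades it to a homeomorphism, so $\gamma$ descends to a continuous function on $\mathbb{S}_{\qf,\bD}(\rT)$. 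The quantifier-free analog of Proposition \ref{prop:defpredcontinuous} then identifies this descended function with a quantifier-free definable predicate, which by the very definition of quantifier-free definable predicate is approximable within $\epsilon$ on $\bD$ by a quantifier-free formula. Since $\bD$ and $\epsilon$ were arbitrary, $\phi$ is quantifier-free definable.

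The only nontrivial step is the homeomorphism upgrade in the backward direction, which rests on two prerequisites already stated in the excerpt: compactness of $\mathbb{S}_{\bD}(\rT)$ and the fact that $\pi_{\qf}$ between these compact Hausdorff spaces is a topological quotient map. A minor bookkeeping point worth flagging is that the definition of quantifier elimination in the excerpt quantifies only over finite-variable definable predicates, so it suffices to verify the homeomorphism on $\mathbb{S}_{\bD}(\rT)$ for finite tuples of domains, which is exactly the content of the hypothesis in the lemma.
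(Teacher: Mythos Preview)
Your proposal is correct and follows essentially the same approach as the paper's proof: both directions rest on the correspondence between (quantifier-free) definable predicates and continuous functions on the (quantifier-free) type space, together with the fact that a continuous bijection between compact Hausdorff spaces is a homeomorphism. The only cosmetic differences are that the paper handles the implications in the opposite order and organizes the $(\Leftarrow)$ direction by first upgrading the global map $\mathbb{S}_{\bS}(\rT) \to \mathbb{S}_{\qf,\bS}(\rT)$ to a homeomorphism rather than working domain-by-domain as you do, but the substance is identical.
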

	
	\begin{proof}
		It suffices to show that for every $d \in \N$ and $d$-tuple $\bS$ of sorts, the map $\mathbb{S}_{\bD}(\rT) \to \mathbb{S}_{\bD,\qf}(\rT)$ is injective if and only if every definable predicate in variables $x_1$, \dots, $x_d$ from $S_1$, \dots, $S_d$ is a quantifier-free definable predicate.
		
		Fix $d$ and let $\bS$ be a $d$-tuple of sorts.  If the map $\mathbb{S}_{\bS}(\rT) \to \mathbb{S}_{\qf,\bS}(\rT)$ is injective, then it is bijective.  This implies that the restricted map $\mathbb{S}_{\bD}(\rT) \to \mathbb{S}_{\qf,\bD}(\rT)$ is bijective as well, and since $\mathbb{S}_{\bD}(\rT)$ is compact and $\mathbb{S}_{\qf,\bD}(\rT)$ is Hausdorff, the map $\mathbb{S}_{\bD}(\rT) \to \mathbb{S}_{\qf,\bD}(\rT)$ is a homeomorphism.  But this implies that $\mathbb{S}_{\bS}(\rT) \to \mathbb{S}_{\qf,\bS}(\rT)$ is a homeomorphism.  Definable predicates are equivalent to continuous functions on $\mathbb{S}_{\bS}(\rT)$ while quantifier-free definable predicates are equivalent to continuous functions on $\mathbb{S}_{\qf,\bS}(\rT)$.  The restriction map $\mathbb{S}_{\bS}(\rT) \to \mathbb{S}_{\qf,\bS}(\rT)$ induces the inclusion map $C(\mathbb{S}_{\qf,\bS}(\rT)) \to C(\mathbb{S}_{\bS}(\rT))$ from quantifier-free definable predicates to all definable predicates.  Since this map is bijective, then every definable predicate is quantifier-free definable.
		
		Conversely, suppose that every definable predicate is quantifier-free definable.  If $\mu$ and $\nu$ are two distinct types, then there exists a formula $\phi$ such that $\mu(\phi) \neq \nu(\phi)$.  By assumption $\phi$ is a quantifier-free definable predicate, and hence $\mu(\phi) \neq \nu(\phi)$ implies that $\pi_{\qf}(\mu) \neq \pi_{\qf}(\nu)$.  Thus, $\pi_{\qf}$ is injective as desired.
	\end{proof}
	
	The following fact was observed from the viewpoint of probability algebras in \cite[Fact 2.10]{BY2012}.  
	
	\begin{lemma} \label{lem:commutativeisomorphism}
		Every separable model of $\Th(L^\infty[0,1],E)$ is isomorphic to $(L^\infty[0,1],E)$.  In other words, $\Th(L^\infty[0,1],E)$ is $\aleph_0$-categorical.
	\end{lemma}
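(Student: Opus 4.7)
The plan is to combine two classical facts: (i) every separable commutative tracial $\mathrm{W}^*$-algebra is of the form $L^\infty(Y,\nu)$ for some standard probability space, and (ii) any two atomless standard probability spaces are measure-theoretically isomorphic. What needs to be checked model-theoretically is that $\Th(L^\infty[0,1],E)$ forces both commutativity and atomlessness (``diffuseness''), so that the measure spaces arising from any two separable models are of this isomorphic type.

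First I would note that commutativity is directly captured by a formula: the sentence $\sup_{y_1,y_2 \in D_1} \norm{y_1 y_2 - y_2 y_1}_2$ evaluates to $0$ in $L^\infty[0,1]$, hence in every model of $\Th(L^\infty[0,1],E)$. Therefore any separable $\cM \models \Th(L^\infty[0,1],E)$ is a separable abelian tracial $\mathrm{W}^*$-algebra, and by the standard spectral/Gelfand theory (see e.g.\ \cite{Sakai1971}) it is $*$-isomorphic as a tracial $\mathrm{W}^*$-algebra to $L^\infty(Y,\nu)$ for a standard (i.e.\ separable) probability space $(Y,\nu)$.

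Next I would exhibit a formula that forces diffuseness. In $L^\infty[0,1]$ every projection $p$ admits a subprojection of half its trace, so the sentence
\[
\sup_{y_1 \in D_1} \inf_{y_2 \in D_1} \Bigl[ \norm{y_1^2 - y_1}_2 + \norm{y_2^2 - y_2}_2 + \norm{y_1 y_2 - y_2}_2 + \bigl| \re\tr(y_1) - 2\re\tr(y_2) \bigr| \Bigr]
\]
(or a similar expression using the standard trick of replacing ``projection'' with ``near-projection'' via a continuous functional calculus approximation) evaluates to $0$ in $L^\infty[0,1]$. Any model of the theory must therefore satisfy it, which translates, in the $L^\infty(Y,\nu)$ picture, to the statement that every measurable set of positive measure contains a subset of exactly half its measure. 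Iterating, every such set splits into pieces of arbitrarily small measure, which is precisely the absence of atoms in $(Y,\nu)$.

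Finally I would invoke the classical measure isomorphism theorem: any two atomless standard probability spaces are isomorphic mod null sets via a measure-preserving bijection, and such an isomorphism induces an isomorphism of the associated $L^\infty$-algebras as tracial $\mathrm{W}^*$-algebras. Applying this to $(Y,\nu)$ and $([0,1],\text{Leb})$ yields $\cM \cong (L^\infty[0,1],E)$. The main obstacle I anticipate is purely bookkeeping: writing down a genuine $\cL_{\tr}$-formula that witnesses diffuseness, since projections are not first-order objects but must be approximated via the trace and the relations $y^2 = y = y^*$ in $\norm{\cdot}_2$; this is standard but slightly delicate given that the operator norm is not itself a definable predicate in $\cL_{\tr}$.
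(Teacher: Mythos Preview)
Your overall strategy---commutativity via a sentence, diffuseness via a sentence, then the classical isomorphism theorem for atomless standard probability spaces---is exactly the paper's, and is correct. The only substantive difference is in the sentence witnessing diffuseness, and here there is a genuine issue worth flagging.

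Your displayed formula does \emph{not} evaluate to $0$ in $L^\infty[0,1]$: the outer $\sup_{y_1}$ ranges over the whole unit ball, and for a non-projection such as $y_1 = \tfrac{1}{2}\cdot 1$ the term $\norm{y_1^2 - y_1}_2 = \tfrac{1}{4}$ is positive regardless of $y_2$, so the supremum is at least $\tfrac{1}{4}$. You clearly anticipate this (your parenthetical about near-projections and your closing paragraph both say as much), and it can be repaired, e.g.\ by replacing the sum with a truncated difference of the form $\max\bigl(0,\ [\text{conditions on }y_2] - C\cdot(\norm{y_1^2-y_1}_2 + \norm{y_1^*-y_1}_2)\bigr)$ so that non-projection $y_1$'s contribute $0$. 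But this is exactly the ``bookkeeping'' you worried about, and it is not entirely trivial to get the constants right.

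The paper sidesteps this by using, for each $n\in\N$, the purely existential sentence
\[
\inf_{x_1,\dots,x_n \in D_1}\Bigl( d(x_1+\cdots+x_n,1) + \sum_j d(x_j^2,x_j) + \sum_j d(x_j,x_j^*) + \sum_j \bigl|\tau(x_j)-\tfrac{1}{n}\bigr|\Bigr),
\]
which is manifestly $0$ in $L^\infty[0,1]$ (take indicators of $[(j-1)/n,j/n)$) and which forces the absence of atoms of mass $\geq 1/n$ in any model. Since this is an $\inf$-only sentence, there is no need to quantify over approximate projections at all, and the delicate issue you flagged simply does not arise. Both routes lead to the same conclusion; the paper's is just the cleaner implementation.
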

	
	\begin{proof}
		Let $\cM$ be a separable model of $\Th(L^\infty[0,1],E)$.  Then $\cM$ is a tracial $\mathrm{W}^*$-algebra.  Moreover, it is commutative since
		\[
		\sup_{X, Y \in D_1^{\cM}} d^{\cM}(XY, YX) = \sup_{X,Y \in D_1^{L^\infty[0,1]}} \norm{XY - YX}_{L^2[0,1]} = 0.
		\]
		To show that $\cM$ is diffuse, consider for each $n \in \N$ the sentence
		\[
		\inf_{x_1,\dots,x_n \in D_1^{\cM}} \left( d\left(x_1 + \dots + x_n, 1 \right) + \sum_{j=1}^n d(x_j^2,x_j) + \sum_{j=1}^n d(x_j,x_j^*) + \sum_{j=1}^n \left|\tau(x_j) - \frac{1}{n}\right| \right).
		\]
		The value of these sentences in $L^\infty[0,1]$ is $0$ since we can take $x_j = \mathbf{1}_{[(j-1)/n,j/n)}$.  Therefore, the value of these sentences in $\cM$ is also zero.  However, this could not happen if $\cM$ had atoms.
		
		Now we apply the fact that every diffuse separable commutative tracial $\mathrm{W}^*$-algebra is isomorphic to $(L^\infty[0,1],E)$ (see e.g. \cite{Sakai1971} or \cite[\S 3]{AP2017}).  
	\end{proof}
	
	\begin{lemma} \label{lem:transformation}
		Let $\bX, \bY \in L^\infty[0,1]^d$.  If $\bX$ and $\bY$ have the same quantifier-free type, then for every $\epsilon > 0$, there exists an automorphism $\alpha$ of $(L^\infty[0,1],E)$ such that $\norm{\alpha(\bX) - \bY}_{L^2[0,1]^d} \leq \epsilon$.
	\end{lemma}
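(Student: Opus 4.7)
The plan is to reduce to a finite combinatorial situation by approximating $\bX$ and $\bY$ by $\mathbb{C}^d$-valued simple functions built from a common Borel partition of $\mathbb{C}^d$, then exploit the fact that Lebesgue measure on $[0,1]$ is atomless to build a measure-preserving automorphism matching one simple approximant to the other.

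First I would unpack the hypothesis: two tuples in the commutative tracial $\mathrm{W}^*$-algebra $L^\infty[0,1]$ have the same quantifier-free type if and only if they have the same $*$-moments, which is equivalent to equality of the compactly supported joint distributions $\mu_{\bX}=\mu_{\bY}$ on $\mathbb{C}^d$. Fix $\epsilon>0$, choose $R>0$ with $\supp\mu_{\bX}\subseteq \overline{B_R(0)}^d$, and partition this set into Borel cells $C_1,\dots,C_N$ of diameter less than $\epsilon$, picking a point $z_k\in C_k$. Define the simple approximants
\[
\bX'=\sum_{k=1}^N z_k\,\mathbf{1}_{A_k},\qquad \bY'=\sum_{k=1}^N z_k\,\mathbf{1}_{B_k},
\]
where $A_k=\bX^{-1}(C_k)$ and $B_k=\bY^{-1}(C_k)$; by construction $\norm{\bX-\bX'}_{L^2}\leq\epsilon$ and $\norm{\bY-\bY'}_{L^2}\leq\epsilon$.

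The key point is that equality of joint distributions gives $E(A_k)=\mu_{\bX}(C_k)=\mu_{\bY}(C_k)=E(B_k)$ for every $k$. Since $([0,1],\mathrm{Leb})$ is atomless, for each $k$ with $E(A_k)>0$ the restricted Lebesgue measure on $A_k$ and on $B_k$ are standard atomless measures of the same total mass, hence there is a measure-preserving Borel bijection $\sigma_k:A_k\to B_k$ (modulo null sets). Assembling these into a single measure-preserving automorphism $\sigma$ of $([0,1],\mathrm{Leb})$ and letting $\alpha(f)=f\circ\sigma^{-1}$ gives an automorphism of $(L^\infty[0,1],E)$ with $\alpha(\mathbf{1}_{A_k})=\mathbf{1}_{B_k}$, so $\alpha(\bX')=\bY'$.

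The conclusion is then immediate from the triangle inequality and the fact that $\alpha$ is $L^2$-isometric:
\[
\norm{\alpha(\bX)-\bY}_{L^2}\leq \norm{\alpha(\bX)-\alpha(\bX')}_{L^2}+\norm{\bY'-\bY}_{L^2}=\norm{\bX-\bX'}_{L^2}+\norm{\bY'-\bY}_{L^2}\leq 2\epsilon,
\]
after which we rescale $\epsilon$. The only mildly delicate step is the assembly of the pieces $\sigma_k$ into a genuine automorphism of $[0,1]$ rather than merely a partial isomorphism; this is standard but requires invoking the classification of atomless standard probability spaces cell-by-cell, which I expect to be the main (though essentially routine) technical point.
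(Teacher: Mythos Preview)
Your proof is correct and follows essentially the same route as the paper's: partition the range into cells of small diameter, use equality of the joint distributions to match the measures of the preimages, and assemble cellwise measure-preserving bijections into a single automorphism of $([0,1],\mathrm{Leb})$. The only cosmetic difference is that the paper skips the simple-function approximants $\bX',\bY'$ and instead observes directly that $\alpha(\bX)(\omega)$ and $\bY(\omega)$ both lie in the same cell, giving the pointwise bound $\norm{\alpha(\bX)(\omega)-\bY(\omega)}_{\C^d}\leq\epsilon$ and hence the $L^2$ estimate with constant $\epsilon$ rather than $2\epsilon$.
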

	
	\begin{proof}
		Fix $\bX$ and $\bY$.  Let $P$ denote the Lebesgue (probability) measure on $[0,1]$.  Fix $r > 0$ such that $\norm{X_j}_{L^\infty[0,1]} \leq r$ and $\norm{Y_j}_{L^\infty[0,1]} \leq r$.  Fix $\epsilon > 0$.  Let $S_1$, \dots, $S_k$ be a partition of $\{z \in \C: |z| \leq r\}^d$ into measurable sets each of which has diameter at most $\epsilon$.
		
		Since $\bX$ and $\bY$ have the same quantifier-free type (or probability distribution), we have $P(\bX \in S_j) = P(\bY \in S_j)$.  Thus, $\bX^{-1}(S_j)$ and $\bY^{-1}(S_j)$ are two Lebesgue-measurable subsets of $[0,1]$ with the same measure.  This implies (by a standard exercise in measure theory
		) that there is a measurable isomorphism $f_j: \bX^{-1}(S_j) \to \bY^{-1}(S_j)$.  Let $f: [0,1] \to [0,1]$ be the measurable map given by $f|_{\bX^{-1}(S_j)} = f_j$.  Let $\alpha$ be the automorphism of $L^\infty[0,1]$ given by $Z \mapsto Z \circ f^{-1}$.
		
		Then $\alpha(\bX) = \bX \circ f^{-1}$ maps $\bY^{-1}(S_j)$ into $S_j$, and therefore, for $\omega \in \bY^{-1}(S_j)$, both $\alpha(\bX)(\omega)$ and $\bY(\omega)$ are in $S_j$, which implies that $\norm{\alpha(\bX)(\omega) - \bY(\omega)}_{\C^d} \leq \diam(S_j) \leq \epsilon$.  Therefore, $\norm{\alpha(\bX)(\omega) - \bY(\omega)}_{\C^d} \leq \epsilon$ for all $\omega \in [0,1]$ and in particular $\norm{\alpha(\bX) - \bY}_{L^2[0,1]^d} \leq \epsilon$.
	\end{proof}
	
	\begin{proof}[Proof of Theorem \ref{thm:commutativeQE}]
		Fix $d$ and let $\mathbb{S}_d(\Th(L^\infty[0,1],E))$ denote the space of types of $d$-tuples in models of the theory of $L^\infty([0,1],E)$.  Consider two types $\mu$ and $\nu$ in $\mathbb{S}_d(\Th(L^\infty[0,1],E))$ with $\pi_{\qf}(\mu) = \pi_{\qf}(\nu)$.  Let $\bX$ be a tuple from some model $\cM$ with type $\mu$ and let $\bY$ be a tuple from $\cN$ with type $\nu$.  By passing to separable elementary submodels through the L{\"o}wenheim-Skolem theorem, we may assume without loss of generality that $\cM$ and $\cN$ are separable.  Then by Lemma \ref{lem:commutativeisomorphism}, we may assume that $\cM = \cN = (L^\infty[0,1],E)$.
		
		Because $\bX$ and $\bY$ have the same quantifier-free type, Lemma \ref{lem:transformation} implies that for every $n \in \N$, there exists an automorphism $\alpha_n$ of $(L^\infty[0,1],E)$ such that $\norm{\alpha_n(\bX) - \bY}_{L^2[0,1]^d} < 1/n$.  Therefore,
		\[
		\tp^{L^\infty[0,1],E}(\bY) = \lim_{n \to \infty} \tp^{L^\infty[0,1],E}(\alpha_n(\bX)) = \tp^{L^\infty[0,1],E}(\bX).
		\]
		This implies that the map $\pi_{\qf}: \mathbb{S}_d(\Th(L^\infty[0,1],E)) \to \mathbb{S}_{\qf,d}(\Th(L^\infty[0,1],E))$ is injective for every $m$, and therefore, $(L^\infty[0,1],E)$ admits quantifier elimination by Lemma \ref{lem:typeQE}.
	\end{proof}
	
	We also remark that the matrix algebra $M_n(\C)$ admits quantifier elimination as a tracial $\mathrm{W}^*$-algebra, as was observed in \cite[end of \S 2]{EFKV2017}.  This fact can be proved by combining Lemma \ref{lem:typeQE} with the following result.
	
	\begin{lemma}
		Let $\bX$, $\bY \in M_n(\C)^d$.  The following are equivalent:
		\begin{enumerate}[(1)]
			\item $\bX$ and $\bY$ have the same type in $(M_n(\C),\tr_n)$.
			\item $\bX$ and $\bY$ have the same quantifier-free type $(M_n(\C),\tr_n)$.
			\item There exists a unitary $U$ such that $U \bX U^* = \bY$.
		\end{enumerate}
	\end{lemma}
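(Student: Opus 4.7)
The implication $(1) \Rightarrow (2)$ is immediate since quantifier-free formulas are formulas. For $(3) \Rightarrow (1)$, conjugation by a unitary $U$ gives a trace-preserving $*$-automorphism $\alpha$ of $(M_n(\C),\tr_n)$, and any such automorphism preserves the values of all $\cL_{\tr}$-formulas by induction on formula complexity (terms transform as $*$-polynomials, the trace is invariant, connectives commute with $\alpha$, and $\alpha$ permutes the operator-norm balls $D_r$). Thus $\tp^{M_n(\C)}(\bY) = \tp^{M_n(\C)}(\alpha(\bX)) = \tp^{M_n(\C)}(\bX)$.

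The substantive content is $(2) \Rightarrow (3)$. The plan is to argue that the $*$-distribution of a tuple in $(M_n(\C), \tr_n)$ determines the tuple up to unitary conjugation, via the representation theory of finite-dimensional $\mathrm{C}^*$-algebras. First, since $\tr_n$ is faithful on $M_n(\C)$, the map $p \mapsto \tr_n(p(\bX,\bX^*)^* p(\bX,\bX^*))$ computed from the quantifier-free type determines the kernel of the evaluation map $\C\langle t_1,\dots,t_d,t_1^*,\dots,t_d^*\rangle \to M_n(\C)$ sending $t_j \mapsto X_j$, and similarly for $\bY$. Assumption $(2)$ therefore yields a unital trace-preserving $*$-isomorphism $\pi : A := \mathrm{C}^*(\bX, 1) \to B := \mathrm{C}^*(\bY, 1)$ with $\pi(X_j) = Y_j$.

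Next, by Artin--Wedderburn, $A \cong \bigoplus_i M_{k_i}(\C)$, and any unital $*$-representation of $A$ on $\C^n$ decomposes as a direct sum $\bigoplus_i M_{k_i}(\C)^{\oplus m_i}$ with $\sum_i m_i k_i = n$. The pullback of $\tr_n$ to $A$ equals $\sum_i (m_i k_i/n)\tau_{k_i}$ where $\tau_{k_i}$ is the normalized trace on the $i$-th block; since $k_i$ and $n$ are determined by the abstract algebra $A$, this formula shows that the trace determines the multiplicities $m_i$. Consequently, the inclusions $A \hookrightarrow M_n(\C)$ (via $\bX$) and $B \hookrightarrow M_n(\C)$ (via $\bY$, composed with $\pi^{-1}$) have the same multiplicities and are therefore unitarily equivalent: there exists a unitary $U \in M_n(\C)$ with $U X_j U^* = \pi(X_j) = Y_j$ for all $j$.

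The main obstacle is really just bookkeeping in the representation-theoretic step $(2) \Rightarrow (3)$: one must verify carefully that the trace data transfers correctly under $\pi$ and that the standard unitary-equivalence statement for finite-dimensional $\mathrm{C}^*$-representations applies in its unital form. Everything else is either definitional or a direct application of the fact that $*$-automorphisms preserve formulas.
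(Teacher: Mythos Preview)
Your proof is correct. The implications $(1)\Rightarrow(2)$ and $(3)\Rightarrow(1)$ match the paper exactly (the paper simply calls them ``immediate''). For $(2)\Rightarrow(3)$, however, you take a genuinely different route: the paper does not give an argument but instead cites the multivariate version of Specht's theorem (Wiegmann \cite{Wiegmann1961}, with a modern verification in \cite{Jing2015}), which states precisely that two matrix tuples with the same traces on all $*$-words are unitarily conjugate. What you have written is in effect a direct proof of that theorem via the structure theory of finite-dimensional $\mathrm{C}^*$-algebras: equality of $*$-moments plus faithfulness of $\tr_n$ gives a trace-preserving $*$-isomorphism of the generated algebras, and then Artin--Wedderburn together with the observation that the restricted trace recovers the multiplicities forces the two unital representations on $\C^n$ to be unitarily equivalent. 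This is a clean, self-contained argument and avoids the external citation; the paper's approach has the virtue of being one line, but relies on a result the reader must look up. One small wording issue: your phrase ``composed with $\pi^{-1}$'' is slightly ambiguous --- the intended comparison is between the two representations $\iota_A$ and $\iota_B\circ\pi$ of the \emph{same} algebra $A$ on $\C^n$, and it is these that have equal multiplicities because they pull back $\tr_n$ to the same trace on $A$. With that clarified, nothing further is needed.
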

	
	(1) $\implies$ (2) and (3) $\implies$ (1) are immediate. The claim (2) $\implies$ (3) follows from the multivariate version of Specht's theorem \cite{Specht1940} observed by Wiegmann \cite{Wiegmann1961} and verified in \cite[Theorem 2.2]{Jing2015}.  See \cite{Shapiro1991} for a survey of related results.
	
	However, there are many tracial $\mathrm{W}^*$-algebras that do not admit quantifier elimination; Goldbring, Hart, and Sinclair showed in \cite[Theorem 2.2]{GHS2013} that a tracial $\mathrm{W}^*$-algebra that is locally universal and McDuff cannot have quantifier elimination 
	The complete classification of $\mathrm{C}^*$-algebras with quantifier elimination was handled in \cite{EFKV2017}, so the general investigation of quantifier elimination for tracial $\mathrm{W}^*$-algebras seems like an achievable and worthwhile research project.
	
	\begin{question}
		Determine which separable tracial $\mathrm{W}^*$-algebras admit quantifier elimination.
	\end{question}
	
	\section{Free entropy for types} \label{sec:entropyfortypes}
	
	We define a version of Voiculescu's entropy for full types rather than only quantifier-free types.  We will see in \S \ref{sec:qfexists} that Voiculescu's free entropy of a $d$-tuple $\bX$ in the presence of $\cM$ can be realized as a special case of free entropy for a closed subset of the type space.
	
	\subsection{Definition}
	
	Because we only have one sort in $\mathcal{L}_{\tr}$, it will be convenient to use slightly different notation for the spaces of types.  Specifically, $\mathbb{S}_d(\rT_{\tr})$ will denote the space of types for $d$-tuples, or in other words $\mathbb{S}_{\bS}(\rT_{\tr})$ for $S = (M,\dots,M)$.  Moreover, $\mathbb{S}_{d,r}(\rT_{\tr})$ will denote the space of $d$-tuples with operator norm bounded by $r$, or in other words, $\mathbb{S}_{\bD}(\rT_{\tr})$ where $\bD = (D_r,\dots,D_r)$.  To simplify notation, we use the same $r$ for all $d$-variables rather than choosing radii $r_1$, \dots, $r_d$.
	
	\begin{definition}[Microstate spaces] \label{def:typemicrostates}
		If $\cK \subseteq \mathbb{S}_d(\rT_{\tr})$ and $r \in (0,\infty)^{\N}$, we define
		\[
		\Gamma_r^{(n)}(\cK) = \left\{\bY \in \prod_{j \in \N} D_r^{M_n(\C)}: \tp^{M_n(\C)}(\bY) \in \cK \right\}.
		\]
	\end{definition}
	
	We view this as a microstate space as in Voiculescu's free entropy theory.  The entropy will be defined in terms of Lebesgue measure of the microstate spaces, with the normalization of Lebesgue measure corresponding to the inner product from the normalized trace, as described in \S \ref{subsec:entropyintro}.  By transporting the canonical Lebesgue measure on $\C^{n^2d}$ through such an isometry, we obtain a canonical Lebesgue measure on $M_n(\C)^d$.  Note that this convention differs from that of Voiculescu since we use the normalized trace rather than the unnormalized trace to define the inner product.
	
	Moreover, here we consider all matrices rather than only self-adjoint matrices like Voiculescu, because the notion of type is more natural without restricting to the self-adjoint setting.  However, most of the results here would also work with the same proof for self-adjoint tuples, provided that we restrict to microstate spaces of self-adjoint matrices.
	
	\begin{definition}[Free entropy for types] \label{def:typeentropy}
		Let $\cK \subseteq \mathbb{S}_r(\rT_{\tr})$.  Then define
		\[
		\chi_{\full,r}^{\cU}(\cK) := \inf_{\text{open } \cO \supseteq \cK} \lim_{n \to \cU} \left( \frac{1}{n^2} \log \vol \Gamma_r^{(n)}(\cO) + 2d \log n \right),
		\]
		where $\cO$ ranges over all neighborhoods of $\cK$ in $\mathbb{S}_{d,r}(\rT_{\tr})$.  Moreover, we define
		\[
		\chi_{\full}^{\cU}(\cK) := \sup_{r > 0} \chi_{\full,r}^{\cU}(\cK).
		\]
	\end{definition}
	
	An important special case is when $\cK$ consists of a single point $\mu \in \mathbb{S}_d(\rT_{\tr})$.  We will write
	\[
	\chi_{\full,r}^{\cU}(\mu) = \chi_{\full,r}^{\cU}(\{\mu\}), \qquad \chi_{\full}^{\cU}(\mu) = \chi_{\full}^{\cU}(\{\mu\}).
	\]
	Looking at the free entropy of a single type $\mu$ is the analog of what Voiculescu originally did for non-commutative $*$-laws (quantifier-free types), since he defined the entropy for the non-commutative $*$-law of a tuple rather than more generally for a set of $*$-laws.  However, the variational principle (Proposition \ref{prop:variational}) in the next section will allow us to express the entropy of a closed set in terms of the entropy of individual types.
	
	We begin with a few immediate observations that will be useful in the sequel.
	
	\begin{observation}
		Since the map $\mathbb{S}_{d,r}(\rT_{\tr}) \to \mathbb{S}_d(\rT_{\tr})$ is a topological embedding, every open subset $\cO$ of the former is the restriction of an open set $\cO'$ in $\mathbb{S}_d(\rT_{\tr})$.  In fact, because $\mathbb{S}_{d,r}(\rT_{\tr})$ is closed in $\mathbb{S}_d(\rT_{\tr})$, we can take
		\[
		\cO' = \cO \cup [\mathbb{S}_d(\rT_{\tr}) \setminus \mathbb{S}_{d,r}(\rT_{\tr})].
		\]
		Thus, in the definition of $\chi_{\full,r}^{\cU}(\cK)$ in Definition \ref{def:typeentropy}, we may take $\cO$ to range over all open neighborhoods of $\cK$ in $\mathbb{S}_d(\rT_{\tr})$ or all open neighborhoods of $\cK \cap \mathbb{S}_{d,r}(\rT_{\tr})$ in $\mathbb{S}_{d,r}(\rT_{\tr})$, and the resulting quantity will be the same.
	\end{observation}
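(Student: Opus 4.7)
The observation really contains two claims: (i) $\mathbb{S}_{d,r}(\rT_{\tr})$ is closed in $\mathbb{S}_d(\rT_{\tr})$, which is what makes the set $\cO' = \cO \cup [\mathbb{S}_d(\rT_{\tr}) \setminus \mathbb{S}_{d,r}(\rT_{\tr})]$ actually open; and (ii) the two possible parametrizations of open neighborhoods in the definition of $\chi_{\full,r}^{\cU}(\cK)$ yield the same infimum. I would prove (i) first and then leverage it for (ii).

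For (i), the plan is to use the defining property of the logic topology on $\mathbb{S}_d(\rT_{\tr})$: a set is closed precisely when its intersection with $\mathbb{S}_{\bD}(\rT_{\tr})$ is closed for every tuple of domains $\bD = (D_{s_1},\dots,D_{s_d})$. Fixing such a $\bD$, I would observe that
\[
\mathbb{S}_{d,r}(\rT_{\tr}) \cap \mathbb{S}_{\bD}(\rT_{\tr}) = \mathbb{S}_{\bD'}(\rT_{\tr}),
\]
where $\bD' = (D_{\min(s_j,r)})_{j=1}^d$, since both sides consist of types of tuples with operator norm bound $\min(s_j,r)$ in the $j$-th coordinate. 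The right-hand side is compact by \cite[Proposition 8.6]{BYBHU2008}; because the inclusion $\mathbb{S}_{\bD'}(\rT_{\tr}) \hookrightarrow \mathbb{S}_{\bD}(\rT_{\tr})$ is a topological embedding (the subspace topology coincides with the weak-$*$ topology on both) and the target is Hausdorff, the image is closed. This establishes closedness of $\mathbb{S}_{d,r}(\rT_{\tr})$ inside $\mathbb{S}_d(\rT_{\tr})$.

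For (ii), given an open neighborhood $\cO$ of $\cK$ in $\mathbb{S}_{d,r}(\rT_{\tr})$, I would use the topological embedding property quoted in the paper to lift $\cO$ to some open $U \subseteq \mathbb{S}_d(\rT_{\tr})$ with $U \cap \mathbb{S}_{d,r}(\rT_{\tr}) = \cO$, and then set $\cO' = U \cup [\mathbb{S}_d(\rT_{\tr}) \setminus \mathbb{S}_{d,r}(\rT_{\tr})]$, which equals $\cO \cup [\mathbb{S}_d(\rT_{\tr}) \setminus \mathbb{S}_{d,r}(\rT_{\tr})]$ and is open thanks to (i). Conversely, any open neighborhood $\cO'$ of $\cK$ in $\mathbb{S}_d(\rT_{\tr})$ restricts to an open neighborhood $\cO' \cap \mathbb{S}_{d,r}(\rT_{\tr})$ of $\cK$ in $\mathbb{S}_{d,r}(\rT_{\tr})$. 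The key point is that by Definition \ref{def:typemicrostates}, $\Gamma_r^{(n)}$ only samples tuples whose coordinates lie in $D_r^{M_n(\C)}$ and whose types therefore lie in $\mathbb{S}_{d,r}(\rT_{\tr})$, so $\Gamma_r^{(n)}(\cO') = \Gamma_r^{(n)}(\cO' \cap \mathbb{S}_{d,r}(\rT_{\tr}))$. Consequently the two parametrizations of neighborhoods generate the same family of microstate sets and their log-volumes agree, so the infima coincide. The main thing to be careful about is simply tracking the three senses of ``neighborhood of $\cK$'' (in $\mathbb{S}_d$, in $\mathbb{S}_{d,r}$, and the restriction of one to the other), but no genuine obstacle arises; the entire observation is a topological bookkeeping step whose only role is to let us move freely between $\mathbb{S}_d(\rT_{\tr})$ and $\mathbb{S}_{d,r}(\rT_{\tr})$ when computing free entropy.
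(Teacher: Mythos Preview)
Your proposal is correct and follows the same approach as the paper, which gives no separate proof beyond the reasoning embedded in the observation itself. You supply additional detail---in particular, the argument that $\mathbb{S}_{d,r}(\rT_{\tr})$ is closed in $\mathbb{S}_d(\rT_{\tr})$ via the identification $\mathbb{S}_{d,r}(\rT_{\tr}) \cap \mathbb{S}_{\bD}(\rT_{\tr}) = \mathbb{S}_{\bD'}(\rT_{\tr})$ and compactness---which the paper simply asserts; your treatment of part (ii), noting that $\Gamma_r^{(n)}$ only samples types in $\mathbb{S}_{d,r}(\rT_{\tr})$ so that $\Gamma_r^{(n)}(\cO') = \Gamma_r^{(n)}(\cO' \cap \mathbb{S}_{d,r}(\rT_{\tr}))$, is exactly the implicit reasoning behind the observation.
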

	
	\begin{observation}[Monotonicity] \label{obs:monotonicity}
		Note that if $\cK \subseteq \cK' \subseteq \mathbb{S}_d(\rT_{\tr})$, then $\Gamma_r^{(n)}(\cK) \subseteq \Gamma_r^{(n)}(\cK')$.  It follows from this that if $\cO \subseteq \mathbb{S}_{d,r}(\rT_{\tr})$ is open, then
		\[
		\chi_{\full,r}^{\cU}(\cO) = \lim_{n \to \cU} \left( \frac{1}{n^2} \log \vol \Gamma_r^{(n)}(\cO) + 2d \log n \right),
		\]
		that is, the infimum over neighborhoods of $\cO$ is achieved by $\cO$ itself.  Similarly, it follows that if $\cK \subseteq \cK'$, then $\chi_{\full,r}^{\cU}(\cK) \leq \chi_{\full,r}^{\cU}(\cK')$ and hence $\chi_{\full}^{\cU}(\cK) \leq \chi_{\full}^{\cU}(\cK')$.
	\end{observation}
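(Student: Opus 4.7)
The plan is to extract all three assertions of the observation directly from the set-theoretic definition of the microstate spaces, without any new estimate. First I would verify the inclusion $\Gamma_r^{(n)}(\cK) \subseteq \Gamma_r^{(n)}(\cK')$ whenever $\cK \subseteq \cK'$: this is immediate, since by Definition \ref{def:typemicrostates} a tuple $\bY$ lies in $\Gamma_r^{(n)}(\cK)$ iff its type $\tp^{M_n(\C)}(\bY)$ lies in $\cK$, and membership in $\cK$ implies membership in $\cK'$. Consequently $\vol \Gamma_r^{(n)}(\cK) \leq \vol \Gamma_r^{(n)}(\cK')$ for every $n$, and so the monotonicity passes through the ultrafilter limit.

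Next I would derive the statement for open sets $\cO \subseteq \mathbb{S}_{d,r}(\rT_{\tr})$. On the one hand, $\cO$ is itself an open neighborhood of $\cO$, which gives the upper bound
\[
\chi_{\full,r}^{\cU}(\cO) \leq \lim_{n \to \cU} \left( \frac{1}{n^2} \log \vol \Gamma_r^{(n)}(\cO) + 2d \log n \right).
\]
On the other hand, any open neighborhood $\cO'$ of $\cO$ satisfies $\cO \subseteq \cO'$, so by the first step $\vol \Gamma_r^{(n)}(\cO) \leq \vol \Gamma_r^{(n)}(\cO')$, and taking $\lim_{n \to \cU}$ and then infimum over all such $\cO'$ yields the matching lower bound. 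Thus the infimum is attained at $\cO$ itself.

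Finally, for arbitrary $\cK \subseteq \cK' \subseteq \mathbb{S}_d(\rT_{\tr})$, I would note that every open neighborhood $\cO$ of $\cK'$ in $\mathbb{S}_{d,r}(\rT_{\tr})$ is also an open neighborhood of $\cK$. Hence the family of neighborhoods of $\cK$ contains the family of neighborhoods of $\cK'$, so the infimum defining $\chi_{\full,r}^{\cU}(\cK)$ is taken over a larger collection and therefore
\[
\chi_{\full,r}^{\cU}(\cK) \leq \chi_{\full,r}^{\cU}(\cK').
\]
Taking the supremum over $r > 0$ preserves the inequality and yields $\chi_{\full}^{\cU}(\cK) \leq \chi_{\full}^{\cU}(\cK')$. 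There is no real obstacle here; the only thing to double check is that when passing between neighborhoods in $\mathbb{S}_{d,r}(\rT_{\tr})$ and neighborhoods in $\mathbb{S}_d(\rT_{\tr})$ one uses the topological embedding property recorded in the preceding observation, so that the two possible formulations of the infimum agree.
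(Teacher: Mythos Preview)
Your proposal is correct and matches the paper's approach; in fact the paper records this as an observation without a separate proof, leaving the details implicit, and you have simply spelled out the obvious verifications from Definition~\ref{def:typemicrostates} and Definition~\ref{def:typeentropy}. There is nothing to add.
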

	
	\begin{remark} \label{rem:radiusindependent}
		A similar argument as in \cite[Proposition 2.4]{VoiculescuFE2} shows that if $\cK \subseteq \mathbb{S}_{d,r}(\rT_{\tr})$, and if $r < r_1 < r_2$, then
		\[
		\chi_{\full,r_1}^{\cU}(\cK) = \chi_{\full,r_2}^{\cU}(\cK) = \chi_{\full,r}^{\cU}(\cK).
		\]
	\end{remark}

	\subsection{Variational principle}
	
	In this section, we show that the free entropy defines an upper semi-continuous function on the type space, and then deduce a variational principle for the entropy of a closed set, in the spirit of various results in the theory of entropy and large deviations.
	
	\begin{lemma}[Upper semi-continuity] \label{lem:USC}
		For each $r \in (0,\infty)$ and $d \in \N$, the function $\mu \mapsto \chi_{\full,r}^{\cU}(\mu)$ is upper semi-continuous on $\mathbb{S}_d(\rT_{\tr})$.
	\end{lemma}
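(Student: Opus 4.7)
The plan is to verify upper semi-continuity directly from the sublevel-set characterization: for every $c \in \R$ the set $U_c := \{\mu \in \mathbb{S}_d(\rT_{\tr}) : \chi_{\full,r}^{\cU}(\mu) < c\}$ should be open. The essential observation is that $\chi_{\full,r}^{\cU}(\mu)$ is constructed as an infimum over open neighborhoods of $\mu$ of the quantity $F(\cO) := \lim_{n \to \cU}\!\bigl(\tfrac{1}{n^2}\log \vol \Gamma_r^{(n)}(\cO) + 2d\log n\bigr)$, which does not itself depend on $\mu$. Any pointwise infimum of this form over neighborhoods is automatically upper semi-continuous, so the proof reduces almost entirely to carefully matching neighborhoods across the two topologies $\mathbb{S}_{d,r}(\rT_{\tr}) \subseteq \mathbb{S}_d(\rT_{\tr})$.

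Fix $\mu_0 \in U_c$. By Definition \ref{def:typeentropy} there exists an open neighborhood $\cO$ of $\mu_0$ in $\mathbb{S}_{d,r}(\rT_{\tr})$ with $F(\cO) < c$. Using the first observation of this subsection, I may replace $\cO$ by $\cO' := \cO \cup [\mathbb{S}_d(\rT_{\tr}) \setminus \mathbb{S}_{d,r}(\rT_{\tr})]$, which is open in $\mathbb{S}_d(\rT_{\tr})$ because $\mathbb{S}_{d,r}(\rT_{\tr})$ is closed there, and which gives the same microstate spaces as $\cO$ since $\Gamma_r^{(n)}$ only sees types in $\mathbb{S}_{d,r}(\rT_{\tr})$. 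Hence $F(\cO') = F(\cO) < c$, and $\cO'$ is an open neighborhood of $\mu_0$ in $\mathbb{S}_d(\rT_{\tr})$.

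For any $\mu \in \cO'$, the same set $\cO'$ is an open neighborhood of $\mu$, so by Observation \ref{obs:monotonicity} (or directly from the definition of the infimum), $\chi_{\full,r}^{\cU}(\mu) \leq F(\cO') < c$; if $\mu \notin \mathbb{S}_{d,r}(\rT_{\tr})$, then $\cO'$ itself is an open neighborhood of $\mu$ whose microstate spaces are empty, so trivially $\chi_{\full,r}^{\cU}(\mu) = -\infty < c$. Either way $\mu \in U_c$, proving $\cO' \subseteq U_c$ and hence that $U_c$ is open. There is no substantive obstacle beyond the bookkeeping of extending a neighborhood in the subspace $\mathbb{S}_{d,r}(\rT_{\tr})$ to one in $\mathbb{S}_d(\rT_{\tr})$; this is handled by the same extension trick recorded in the first observation of this subsection.
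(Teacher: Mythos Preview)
Your proof is correct and is essentially the same argument as the paper's: the paper phrases it abstractly by writing $\chi_{\full,r}^{\cU}(\mu) = \inf_{\cO} f_{\cO}(\mu)$ with $f_{\cO}(\mu) = \chi_{\full,r}^{\cU}(\cO)$ for $\mu \in \cO$ and $\infty$ otherwise, then invoking that an infimum of upper semi-continuous functions is upper semi-continuous, while you unwind this directly via the sublevel-set criterion. Your extra bookkeeping about passing between neighborhoods in $\mathbb{S}_{d,r}(\rT_{\tr})$ and $\mathbb{S}_d(\rT_{\tr})$ is exactly the content of the observation the paper records just before the lemma.
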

	
	\begin{proof}
		Fix $r$.  For each open $\cO \subseteq \mathbb{S}_d(\rT_{\tr})$, let
		\[
		f_{\cO}(\mu) = \begin{cases} \chi_{\full,r}^{\cU}(\cO), & \mu \in \cO \\ \infty, & \text{otherwise.} \end{cases}
		\]
		Since $\cO$ is open, $f_{\cO}$ is upper semi-continuous.  Moreover, $\chi_{\full,r}^{\cU}(\mu)$ is the infimum of $f_{\cO}(\mu)$ over all open $\cO \subseteq \mathbb{S}(\rT_{\tr})$, and the infimum of a family of upper semi-continuous functions is upper semi-continuous.
	\end{proof}

	\begin{proposition}[Variational principle] \label{prop:variational}
		Let $\mathcal{K} \subseteq \mathbb{S}_d(\rT_{\tr})$ and let $r > 0$.  Then
		\begin{equation} \label{eq:variational1}
			\sup_{\mu \in \cK} \chi_{\full,r}^{\cU}(\mu) \leq \chi_{\full,r}^{\cU}(\cK) \leq \sup_{\mu \in \operatorname{cl}(\cK) } \chi_{\full,r}^{\cU}(\mu).
		\end{equation}
		Hence,
		\begin{equation} \label{eq:variational2}
			\sup_{\mu \in \cK} \chi_{\full}^{\cU}(\mu) \leq \chi_{\full}^{\cU}(\cK) \leq \sup_{\mu \in \operatorname{cl}(\cK) } \chi_{\full}^{\cU}(\mu).
		\end{equation}
		In particular, if $\cK$ is closed, then these inequalities are equalities.
	\end{proposition}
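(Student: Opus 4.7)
The lower bound in \eqref{eq:variational1} is immediate from Observation \ref{obs:monotonicity}: for each $\mu \in \cK$, applying monotonicity to $\{\mu\} \subseteq \cK$ gives $\chi_{\full,r}^{\cU}(\mu) \leq \chi_{\full,r}^{\cU}(\cK)$, and taking the supremum over $\mu$ finishes it.

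For the upper bound in \eqref{eq:variational1}, I plan to set $s := \sup_{\mu \in \operatorname{cl}(\cK)} \chi_{\full,r}^{\cU}(\mu)$ and, given $\epsilon > 0$, produce a single open neighborhood $\cO$ of $\cK$ in $\mathbb{S}_d(\rT_{\tr})$ whose ultrafilter-normalized log-volume is at most $s + \epsilon$. The main ingredient is a compactness-plus-log-sum argument: for each $\mu$ in the compact set $\operatorname{cl}(\cK) \cap \mathbb{S}_{d,r}(\rT_{\tr})$, using the definition of $\chi_{\full,r}^{\cU}(\mu)$ as an infimum over neighborhoods I choose an open $\cO_\mu \subseteq \mathbb{S}_d(\rT_{\tr})$ containing $\mu$ with
\[
\lim_{n \to \cU}\Bigl(\tfrac{1}{n^2} \log \vol \Gamma_r^{(n)}(\cO_\mu) + 2d \log n\Bigr) < s + \epsilon.
\]
I then extract a finite subcover $\cO_{\mu_1}, \dots, \cO_{\mu_k}$ of $\operatorname{cl}(\cK) \cap \mathbb{S}_{d,r}(\rT_{\tr})$ and set $\cO := \bigcup_{i=1}^k \cO_{\mu_i} \,\cup\, \bigl(\mathbb{S}_d(\rT_{\tr}) \setminus \mathbb{S}_{d,r}(\rT_{\tr})\bigr)$.

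This $\cO$ is open since $\mathbb{S}_{d,r}(\rT_{\tr})$ is closed in $\mathbb{S}_d(\rT_{\tr})$, and it contains $\cK$ because each point of $\cK$ either lies in the subcover or outside $\mathbb{S}_{d,r}(\rT_{\tr})$. Since the type of any tuple in $D_r^{M_n(\C)}$ automatically sits in $\mathbb{S}_{d,r}(\rT_{\tr})$, the adjoined piece contributes no microstates, so $\Gamma_r^{(n)}(\cO) = \bigcup_{i=1}^k \Gamma_r^{(n)}(\cO_{\mu_i})$. Subadditivity of volume together with the elementary bound $\log\bigl(\sum_{i=1}^k a_i\bigr) \leq \log k + \max_i \log a_i$, plus the fact that the ultrafilter limit of a finite maximum equals the maximum of the ultrafilter limits (by pigeonhole, since one index achieves the max on a set in $\cU$), then yields $\chi_{\full,r}^{\cU}(\cK) \leq \max_i \lim_{n \to \cU}(\text{the bound for } \cO_{\mu_i}) < s + \epsilon$. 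Letting $\epsilon \to 0$ settles \eqref{eq:variational1}; to get \eqref{eq:variational2} I take $\sup_{r > 0}$ on both sides and commute the two suprema on the left using $\chi_{\full}^{\cU} = \sup_r \chi_{\full,r}^{\cU}$. The closed-case equalities are the special case $\operatorname{cl}(\cK) = \cK$.

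The only real obstacle is bookkeeping between the two topologies: $\cO$ must contain all of $\cK$, while the natural arena for a compactness argument built on upper semi-continuity (Lemma \ref{lem:USC}) is the compact space $\mathbb{S}_{d,r}(\rT_{\tr})$. The clean resolution, as above, is to adjoin the open set $\mathbb{S}_d(\rT_{\tr}) \setminus \mathbb{S}_{d,r}(\rT_{\tr})$, which enlarges $\cO$ cosmetically without affecting any microstate-volume estimate. In the degenerate case $s = -\infty$, the same argument runs with an arbitrary target $M$ in place of $s + \epsilon$.
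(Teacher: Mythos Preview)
Your proposal is correct and follows essentially the same route as the paper: monotonicity for the lower bound, and for the upper bound a compactness argument on $\operatorname{cl}(\cK)\cap\mathbb{S}_{d,r}(\rT_{\tr})$ yielding a finite cover, followed by the $\log\sum \le \log k + \max_j \log$ estimate and passage to the ultrafilter limit. The only cosmetic difference is that you explicitly adjoin $\mathbb{S}_d(\rT_{\tr})\setminus\mathbb{S}_{d,r}(\rT_{\tr})$ to make $\cO$ a neighborhood of all of $\cK$, whereas the paper works directly in $\mathbb{S}_{d,r}(\rT_{\tr})$ and silently invokes the earlier observation that neighborhoods may equivalently be taken there; your version is arguably cleaner bookkeeping, and your explicit remark that $\lim_{n\to\cU}\max_j = \max_j \lim_{n\to\cU}$ fills in a step the paper leaves implicit.
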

	
	\begin{proof}
		If $\mu \in \cK$, then by monotonicity $\chi_{\full,r}^{\cU}(\{\mu\}) \leq \chi_{\full,r}^{\cU}(\cK)$.  Taking the supremum over $\mu \in \cK$, we obtain the first inequality of \eqref{eq:variational1}.
		
		For the second inequality of \eqref{eq:variational1}, let $C = \sup_{\mu \in \operatorname{cl}(\cK)} \chi_{\full,r}^{\cU}(\mu)$.  If $C = \infty$, there is nothing to prove.  Otherwise, let $C' > C$.  For each $\mu \in \operatorname{cl}(\cK) \cap \mathbb{S}_{d,r}(\rT_{\tr})$, there exists some open neighborhood $\cO_\mu$ of $\mu$ in $\mathbb{S}_{d,r}(\rT_{\tr})$ such that $\chi_{\full,r}^{\cU}(\cO_\mu) < C'$.   Since $\{\cO_\mu\}_{\mu \in \operatorname{cl}(\cK) \cap \mathbb{S}_{d,r}(\rT_{\tr})}$ is an open cover of the compact set $\operatorname{cl}(\cK) \cap \mathbb{S}_{d,r}(\rT_{\tr})$, there exist $\mu_1$, \dots, $\mu_k \in \operatorname{cl}(\cK) \cap \mathbb{S}_{d,r}(\rT_{\tr})$ such that
		\[
		\operatorname{cl}(\cK) \cap \mathbb{S}_{r}(\rT_{\tr}) \subseteq \bigcup_{j=1}^k \cO_{\mu_j}.
		\]
		Let $\cO = \bigcup_{j=1}^k \cO_{\mu_j}$.  Then
		\[
		\vol \Gamma_r^{(n)}(\cO)) \leq \sum_{j=1}^k \vol \Gamma_r^{(n)}(\cO_{\mu_k}) \leq k \max_j \vol \Gamma_r^{(n)}(\cO_{\mu_j}).
		\]
		Thus,
		\[
		\frac{1}{n^2} \log \vol \Gamma_r^{(n)}(\cO)) + 2d \log n \leq \frac{1}{n^2} \log k + \max_j \frac{1}{n^2} \log \vol \Gamma_r^{(n)}(\cO_{\mu_j}) + 2d \log n.
		\]
		Taking the limit as $n \to \cU$,
		\[
		\chi_{\full,r}^{\cU}(\operatorname{cl}(\cK)) \leq  \chi_r^{\cU}(\cO) \leq \max_j \chi_{\full,r}^{\cU}(\cO_{\mu_j}) \leq C'.
		\]
		Since $C' > C$ was arbitrary,
		\[
		\chi_{\full,r}^{\cU}(\cK) \leq \chi_{\full,r}^{\cU}(\operatorname{cl}(\cK)) \leq C = \sup_{\mu \in \operatorname{cl}(\cK)} \chi_{\full,r}^{\cU}(\mu),
		\]
		completing the proof of \eqref{eq:variational1}.  Taking the supremum over $r$ in \eqref{eq:variational1}, we obtain \eqref{eq:variational2}.
	\end{proof}
	
	\begin{remark}
		If we assume that $\cK$ is a closed subset of $\mathbb{S}_{d,r}(\rT_{\tr})$, then $\chi^{\cU}(\mu) = \chi_r^{\cU}(\mu)$ for all $\mu \in \cK$.  Hence, by upper semi-continuity, the supremum in \eqref{eq:variational2} is a maximum. 
	\end{remark}
	
	\subsection{Entropy and ultraproduct embeddings}
	
	\begin{lemma}[Ultraproduct realization of types] \label{lem:ultraproductrealization}
		Let $\cQ = \prod_{n \to \cU} M_n(\C)$.  Let $\mu \in \mathbb{S}_{d}(\rT_{\tr})$. If $\chi_{\full}^{\cU}(\mu) > -\infty$, then there exists $\mathbf{X} \in Q^d$ such that $\tp^{\cQ}(\bX) = \mu$.
	\end{lemma}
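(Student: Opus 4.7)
The plan is to use the hypothesis $\chi_{\full}^{\cU}(\mu) > -\infty$ to produce matricial microstates whose types cluster at $\mu$, and then realize them as the desired ultraproduct tuple via \L o\'s's theorem for continuous logic.

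First, by definition of $\chi_{\full}^{\cU}$, there exists some $r > 0$ with $\chi_{\full,r}^{\cU}(\mu) > -\infty$; in particular, for every open neighborhood $\cO$ of $\mu$ in $\mathbb{S}_{d,r}(\rT_{\tr})$, we have $\lim_{n \to \cU}\bigl(\tfrac{1}{n^2}\log \vol \Gamma_r^{(n)}(\cO) + 2d\log n\bigr) > -\infty$, which forces $\{n : \Gamma_r^{(n)}(\cO) \neq \varnothing\} \in \cU$. Since $\cL_{\tr}$ is separable, the compact Hausdorff space $\mathbb{S}_{d,r}(\rT_{\tr})$ is metrizable, so I can fix a decreasing basis $\cO_1 \supseteq \cO_2 \supseteq \dots$ of open neighborhoods of $\mu$ with $\bigcap_k \cO_k = \{\mu\}$. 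Then $A_k := \{n : \Gamma_r^{(n)}(\cO_k) \neq \varnothing\}$ is a decreasing sequence of sets in $\cU$.

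Next, I carry out a diagonal selection. For each $n \in \N$, set $k(n) := \sup\{k : n \in A_k\}$ (interpreting the supremum as $0$ if $n \notin A_1$ and as $\infty$ if $n \in \bigcap_k A_k$). For every fixed $K$, the set $\{n : k(n) \geq K\} = A_K$ lies in $\cU$, so $k(n) \to \infty$ along $\cU$. For each $n$ with $k(n) \geq 1$ choose $\bY^{(n)} \in \Gamma_r^{(n)}(\cO_{k(n)})$, and for the remaining (negligible for $\cU$) indices choose $\bY^{(n)}$ arbitrarily in $(D_r^{M_n(\C)})^d$. Since each $Y^{(n)}_j$ has norm at most $r$, the tuple $\bX := [\bY^{(n)}]_{n \to \cU} \in \cQ^d$ is well-defined with $\|X_j\| \leq r$, and by construction $\tp^{M_n(\C)}(\bY^{(n)}) \to \mu$ as $n \to \cU$ in the logic topology of $\mathbb{S}_{d,r}(\rT_{\tr})$.

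Finally, I apply \L o\'s's theorem for tracial ultraproducts: for every $\cL_{\tr}$-formula $\phi$ in $d$ free variables,
\[
\phi^{\cQ}(\bX) = \lim_{n \to \cU} \phi^{M_n(\C)}(\bY^{(n)}) = \lim_{n \to \cU} \tp^{M_n(\C)}(\bY^{(n)})(\phi) = \mu(\phi),
\]
where the last equality uses continuity of the evaluation functional $\nu \mapsto \nu(\phi)$ in the logic topology. Hence $\tp^{\cQ}(\bX) = \mu$.

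The only step that requires care is the diagonal construction in the second paragraph: one must organize the countable family of neighborhoods so that \emph{a single} sequence $(\bY^{(n)})$ has types converging to $\mu$ along $\cU$, rather than only knowing, for each fixed neighborhood separately, that microstates exist $\cU$-often. Metrizability of $\mathbb{S}_{d,r}(\rT_{\tr})$, itself a consequence of the separability of $\cL_{\tr}$, is what makes this diagonalization possible; the rest of the argument is a direct application of \L o\'s's theorem.
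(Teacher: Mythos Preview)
Your proof is correct and follows essentially the same approach as the paper: pick $r$ with $\chi_{\full,r}^{\cU}(\mu) > -\infty$, use metrizability of $\mathbb{S}_{d,r}(\rT_{\tr})$ to get a countable decreasing neighborhood basis at $\mu$, diagonalize to select microstates whose types converge to $\mu$ along $\cU$, and conclude via \L o\'s's theorem. The only cosmetic wrinkle is that when $k(n) = \infty$ the set $\Gamma_r^{(n)}(\cO_{k(n)})$ is undefined; the paper sidesteps this by building the condition $n \geq k$ into its sets $E_k$, and you can do the same by replacing $k(n)$ with $\min(k(n),n)$ in your selection step.
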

	
	\begin{proof}
		If $\chi_{\full}^{\cU}(\mu) > -\infty$, then $\chi_{\full,r}^{\cU}(\mu) > -\infty$ for some $r \in (0,\infty)$.  Since $\mathbb{S}_{d,r}(\rT_{\tr})$ is metrizable, there is a sequence $(\cO_k)_{k \in \N}$ of neighborhoods of $\mu$ in $\mathbb{S}(\rT)$ such that $\overline{\cO}_{k+1} \subseteq \cO_k$ and $\bigcap_{k \in \N} \cO_k = \{\mu\}$.  For $k \in \N$, let
		\[
		E_k = \{n \in \N: n \geq k,  \Gamma_r^{(n)}(\cO_k) \neq \varnothing\}.
		\]
		Now choose $\mathbf{X}^{(n)} \in M_n(\C)^d$ as follows.  For each $n \not \in E_1$, set $\mathbf{X}^{(n)} = 0$.  For each $n \in E_k \setminus E_{k+1}$, let $\mathbf{X}^{(n)}$ be an element of $\Gamma_r^{(n)}(\cO_k)$.
		
		Since $\cU$ is an ultrafilter, either $E_k \in \cU$ or $E_k^c \in \cU$.  If we had $E_k^c \in \cU$, then $\lim_{n \to \cU} (1/n^2) \log \vol(\Gamma^{(n)}(\cO_k)) + 2d \log n$ would be $-\infty$ since $\Gamma^{(n)}(\cO_k)$ would be empty for $n \in E_k^c$.  Hence, $E_k \in \cU$.  For $n \in E_k$, we have $\tp^{M_n(\C)}(\mathbf{X}^{(n)}) \in \cO_k$.  Therefore, $\lim_{n \to \cU} \tp^{M_n(\C)}(\mathbf{X}^{(n)}) \in \overline{\cO}_k$.  Since this holds for all $k$, $\lim_{n \to \cU} \tp^{M_n(\C)}(\mathbf{X}^{(n)}) = \mu$.  Let $\mathbf{X} = [\mathbf{X}^{(n)}]_{n \in \N} \in Q^{\N}$.  Then
		\[
		\tp^{\cQ}(\mathbf{X}) = \lim_{n \to \cU} \tp^{M_n(\C)}(\mathbf{X}^{(n)}) = \mu.  \qedhere
		\]
	\end{proof}
	
	In particular, since $\tp^{\cM}(\bX)$ can be realized in $\cQ$, this implies that $\cM$ is elementarily equivalent to $\cQ$.  Then because $\cQ$ is countably saturated, the same reasoning as \cite[Lemma 4.12]{FHS2014} implies the following result, which will later play a role in the proof of Theorem \ref{thm:entropyembedding}.
	
	\begin{corollary} \label{cor:elementaryembeddings}
		Suppose that $\cM$ is a separable tracial $\mathrm{W}^*$-algebra and $\bX \in M^d$.  If $\chi_{\full}^{\cU}(\tp^{\cM}(\bX)) > -\infty$, then there exists an elementary embedding $\iota: \cM \to \cQ$.  Since the embedding $\iota: \cM \to \cQ$ is elementary, in particular $\tp^{\cQ}(\iota(\bX)) = \tp^{\cM}(\bX)$, and hence $\chi_{\full}^{\cU}(\tp^{\cQ}(\iota(\bX))) = \chi_{\full}^{\cU}(\tp^{\cM}(\bX))$.
	\end{corollary}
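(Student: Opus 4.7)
The plan is to combine Lemma~\ref{lem:ultraproductrealization} with the countable saturation of $\cQ$ (as an ultraproduct indexed by $\N$) and run the standard continuous back-and-forth argument.

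First I would apply Lemma~\ref{lem:ultraproductrealization} to produce $\bX' \in Q^d$ with $\tp^{\cQ}(\bX') = \tp^{\cM}(\bX)$. Since every $\cL_{\tr}$-sentence $\phi$ is also a formula in the $d$ free variables $x_1,\dots,x_d$ (in which those variables simply do not occur), this equality of types forces $\phi^{\cM} = \phi^{\cM}(\bX) = \phi^{\cQ}(\bX') = \phi^{\cQ}$ for every sentence, so $\cM \equiv \cQ$. This is the only step that uses the entropy hypothesis.

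Next I would enumerate a sequence $(a_n)_{n \ge 1}$ in $M$ with $(a_1,\dots,a_d)=\bX$ that is trace-norm dense in every operator-norm ball of $\cM$, with prescribed operator-norm radii $r_n$. I would inductively construct $b_n \in D_{r_n}^{\cQ}$, initialized by $(b_1,\dots,b_d):=\bX'$, so that
\[
\tp^{\cQ}(b_1,\dots,b_n) = \tp^{\cM}(a_1,\dots,a_n) \qquad \text{for every } n.
\]
At the inductive step, the partial type requiring $y \in D_{r_{n+1}}^{\cQ}$ to satisfy $\phi^{\cQ}(b_1,\dots,b_n,y)=\phi^{\cM}(a_1,\dots,a_n,a_{n+1})$ for every formula $\phi$ is consistent: each condition $\inf_{y \in D_{r_{n+1}}}|\phi(b_1,\dots,b_n,y)-\phi^{\cM}(a_1,\dots,a_n,a_{n+1})|=0$ translates, via the inductive hypothesis, into a formula in $(a_1,\dots,a_n)$ evaluated in $\cM$ that is witnessed in $\cM$ by $y = a_{n+1}$. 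Countable saturation of $\cQ$ then supplies the desired $b_{n+1}$.

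Finally, $\iota_0 : a_n \mapsto b_n$ preserves the type (and hence the trace-norm distance and operator-norm bounds) of every finite subtuple, so by density it extends uniquely to a trace-preserving $*$-homomorphism $\iota:\cM\to\cQ$. Uniform continuity of definable predicates (Observation~\ref{obs:defpredunifcont}) propagates type-preservation from the dense sequence to arbitrary tuples in $M$, giving elementariness; the identity $\chi_{\full}^{\cU}(\tp^{\cQ}(\iota(\bX))) = \chi_{\full}^{\cU}(\tp^{\cM}(\bX))$ is then immediate. No step is a genuine obstacle: once $\cM \equiv \cQ$ is in hand, the remainder is the standard argument that any separable structure elementarily equivalent to a countably saturated structure admits an elementary embedding, which is exactly why the excerpt simply cites \cite[Lemma~4.12]{FHS2014}.
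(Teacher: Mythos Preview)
Your proposal is correct and follows essentially the same approach as the paper: use Lemma~\ref{lem:ultraproductrealization} to realize the type in $\cQ$, deduce $\cM \equiv \cQ$, and then invoke countable saturation of $\cQ$ together with separability of $\cM$ to obtain an elementary embedding. The paper simply cites \cite[Lemma~4.12]{FHS2014} for that last step, whereas you unpack the back-and-forth construction explicitly (and harmlessly arrange $\iota(\bX)=\bX'$, which is not needed for the corollary as stated but does no harm).
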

	
	\section{Entropy for quantifier-free and existential types} \label{sec:qfexists}
	
	In this section, we relate the free entropy for types $\chi_{\full}^{\cU}$ back to Voiculescu's original free entropy from \cite{VoiculescuFE2}, as well as free entropy in the presence from \cite{VoiculescuFE3}.  In particular, we show that Voiculescu's free entropy $\chi^{\cU}(\bX)$ is the version for quantifier-free types and the entropy in the presence $\chi^{\cU}(\bX:\cM)$ is the version for existential types.
	
	\subsection{Entropy for quantifier-free types}
	
	Voiculescu's definition of entropy for quantifier-free types is exactly analogous to Definitions \ref{def:typemicrostates} and \ref{def:typeentropy}.  Note that below we use the same notation $\Gamma_r^{(n)}(\cO)$ for microstate spaces associated to a neighborhood $\cO$ whether $\cO$ is a neighborhood in the space of full types or quantifier-free types; therefore, the reader should pay attention to which space the neighborhood $\cO$ comes from.
	
	\begin{definition}[Entropy for quantifier-free types]
		For $\cK \subseteq \mathbb{S}_{\qf,d}(\rT_{\tr})$ and $r \in (0,\infty)$, we define
		\[
		\Gamma_r^{(n)}(\cK) := \left\{\bX \in (D_r^{M_n(\C)})^d: \tp_{\qf}^{M_n(\C)}(\bX) \in \cK \right\}.
		\]
		Then we define for $r \in (0,\infty)$,
		\[
		\chi_{\qf,r}^{\cU}(\cK) := \inf_{\cO \supseteq \cK \text{ open in } \mathbb{S}_{\qf,d,r}(\rT_{\tr})} \lim_{n \to \cU} \left( \frac{1}{n^2} \log \vol \Gamma_r^{(n)}(\cO) + 2d \log n \right),
		\]
		and we set
		\[
		\chi_{\qf}^{\cU}(\cK) := \sup_{r \in (0,\infty)} \chi_{\qf,r}^{\cU}(\cK).
		\]
		For $\mu \in \mathbb{S}_{\qf}(\rT_{\tr})$, let $\chi_{\qf}^{\cU}(\mu) := \chi_{\qf}^{\cU}(\{\mu\})$.
	\end{definition}
	
	Voiculescu \cite[Definition 2.1]{VoiculescuFE2} originally defined $\chi_{\qf}$ in terms of particular open sets defined by looking at moments of order up to $m$ being within some distance $\epsilon$ of the moments of $\mu$; in other words, he used a neighborhood basis $\mu$ in $\mathbb{S}_{\qf,d,r}(\rT_{\tr})$ rather than all open sets.  The following lemma shows that our definition is equivalent.
	
	\begin{lemma} \label{lem:sequenceofneighborhoods}
		Let $r \in (0,\infty)$.  Let $\cK \subseteq \mathbb{S}_{\qf,d,r}(\rT_{\tr})$.  Let $(\cO_k)_{k \in \N}$ be a sequence of open subsets of $\mathbb{S}(\rT_{\tr})$ such that $\overline{\cO_{k+1}} \subseteq \cO_k$ and $\bigcap_{k=1}^\infty \cO_k = \cK$.  Then
		\[
		\chi_{\qf,r}^{\cU}(\cK) = \lim_{k \to \infty} \chi_{\qf,r}^{\cU}(\cO_k) = \inf_{k \in \N} \chi_{\qf,r}^{\cU}(\cO_k).
		\]
		The same holds with $\mathbb{S}_{\qf,d,r}(\rT_{\tr})$ replaced by $\mathbb{S}_{d,r}(\rT_{\tr})$ and $\chi_{\qf,r}^{\cU}$ replaced by $\chi_{\full,r}^{\cU}$.
	\end{lemma}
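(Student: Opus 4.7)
My plan is to prove the lemma in two halves: an easy monotone half, then a compactness argument for the reverse inequality.

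First, since $\cO_{k+1}\subseteq\overline{\cO_{k+1}}\subseteq\cO_k$, the sequence $\chi_{\qf,r}^{\cU}(\cO_k)$ is weakly decreasing by Observation~\ref{obs:monotonicity}, so the limit equals the infimum. Similarly, $\cK\subseteq\cO_k$ for every $k$ gives $\chi_{\qf,r}^{\cU}(\cK)\leq\inf_k\chi_{\qf,r}^{\cU}(\cO_k)$ directly from monotonicity.

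For the reverse inequality, I would first argue that the entropy only sees the intersection with $\mathbb{S}_{\qf,d,r}(\rT_{\tr})$. Indeed, any $\bX\in D_r^{M_n(\C)}$ has $\tp_{\qf}^{M_n(\C)}(\bX)\in\mathbb{S}_{\qf,d,r}(\rT_{\tr})$, so $\Gamma_r^{(n)}(\cO_k)=\Gamma_r^{(n)}(\cO_k\cap\mathbb{S}_{\qf,d,r}(\rT_{\tr}))$, and the same holds for any open $\cO\supseteq\cK$ in $\mathbb{S}_{\qf,d,r}(\rT_{\tr})$. So it suffices to show: given any relatively open $\cO\supseteq\cK$ in $\mathbb{S}_{\qf,d,r}(\rT_{\tr})$, there exists $k$ with $\cO_k\cap\mathbb{S}_{\qf,d,r}(\rT_{\tr})\subseteq\cO$. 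Once this is established, monotonicity yields $\inf_k\chi_{\qf,r}^{\cU}(\cO_k)\leq\chi_{\qf,r}^{\cU}(\cO)$, and then taking the infimum over $\cO$ gives $\inf_k\chi_{\qf,r}^{\cU}(\cO_k)\leq\chi_{\qf,r}^{\cU}(\cK)$.

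The containment claim is the main content, and it follows from a standard compactness argument. Suppose for contradiction that for every $k$ there is $\mu_k\in(\cO_k\cap\mathbb{S}_{\qf,d,r}(\rT_{\tr}))\setminus\cO$. Since $\mathbb{S}_{\qf,d,r}(\rT_{\tr})$ is compact and metrizable, we may pass to a subsequence with $\mu_{k_j}\to\mu\in\mathbb{S}_{\qf,d,r}(\rT_{\tr})$. The complement of $\cO$ in $\mathbb{S}_{\qf,d,r}(\rT_{\tr})$ is closed, so $\mu\notin\cO$. On the other hand, for any fixed $\ell$, eventually $k_j\geq\ell+1$, so $\mu_{k_j}\in\cO_{k_j}\subseteq\cO_{\ell+1}\subseteq\overline{\cO_{\ell+1}}\subseteq\cO_\ell$, whence $\mu\in\overline{\cO_{\ell+1}}\subseteq\cO_\ell$. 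Therefore $\mu\in\bigcap_\ell\cO_\ell=\cK\subseteq\cO$, a contradiction.

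The only step with any subtlety is the compactness argument, but the nesting condition $\overline{\cO_{k+1}}\subseteq\cO_k$ is exactly what is needed to push the limit point $\mu$ inside every $\cO_\ell$; without this assumption one could only conclude $\mu\in\overline{\cK}$. The same proof works verbatim with quantifier-free types replaced by full types, since all one uses is compactness and metrizability of the relevant type space in norm-radius $r$ together with the observation that $\Gamma_r^{(n)}(\cO)$ only depends on $\cO\cap\mathbb{S}_{d,r}(\rT_{\tr})$ (respectively $\mathbb{S}_{\qf,d,r}(\rT_{\tr})$).
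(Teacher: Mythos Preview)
Your proof is correct and follows essentially the same approach as the paper. The only cosmetic difference is that the paper phrases the key compactness step via open covers (the closed set $\mathbb{S}_{\qf,d,r}(\rT_{\tr})\setminus\cO$ is covered by the increasing open sets $\overline{\cO}_k^c$, hence by a single one), whereas you use sequential compactness to reach the same containment $\cO_k\cap\mathbb{S}_{\qf,d,r}(\rT_{\tr})\subseteq\cO$; these are equivalent here since the type space is compact and metrizable.
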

	
	\begin{proof}
		The argument for the case of quantifier-free types and the case of full types is the same, and we will write the argument for full types.
		By Observation \ref{obs:monotonicity},
		\[
		\chi_{\full,r}^{\cU}(\cK) \leq \chi_{\full,r}^{\cU}(\cO_{k+1}) \leq \chi_{\full,r}^{\cU}(\cO_k),
		\]
		so that
		\[
		\chi_{\full,r}^{\cU}(\cK) \leq \inf_{k \in \N} \chi_{\full,r}^{\cU}(\cO_k) = \lim_{k \to \infty} \chi_{\full,r}^{\cU}(\cO_k).
		\]
		For the inequality in the other direction, fix $\cO \supseteq \cK$ open.  Then $\mathbb{S}_{d,r}(\rT_{\tr}) \setminus \cO$ is closed and contained in $\cK^c = \bigcup_{k \in \N} \cO_k^c = \bigcup_{k \in \N} \overline{\cO}_k^c$.  By compactness, there is a finite subcollection of $\overline{\cO}_k^c$'s that covers $\mathbb{S}_{d,r}(\rT_{\tr}) \setminus \cO$.  The $\cO_k$'s are nested, so there exists some $k$ such that $\mathbb{S}_{d,r}(\rT_{\tr}) \setminus \cO \subseteq \overline{\cO_k}^c$, hence $\cO_k \cap \mathbb{S}_{d,r}(\rT_{\tr}) \subseteq \cO$.  Therefore,
		\[
		\inf_{k \in \N} \chi_{\full,r}^{\cU}(\cO_k) \leq \chi_{\full,r}^{\cU}(\cO).
		\]
		Since $\cO$ was an arbitrary neighborhood of $\cK$, we conclude that $\inf_{k \in \N} \chi_{\full,r}^{\cU}(\cO_k) \leq \chi_{\full,r}^{\cU}(\cK)$ as desired.
	\end{proof}
	
	This lemma also allows us to relate the entropy for quantifier-free types directly to the entropy for types.
	
	\begin{lemma} \label{lem:qfentropyrelationship}
		Let $\pi: \mathbb{S}_d(\rT_{\tr}) \to \mathbb{S}_{\qf,d}(\rT_{\tr})$ be the canonical restriction map.  Let $\cK \subseteq \mathbb{S}_{\qf,d}(\rT_{\tr})$ be closed.  Then
		\[
		\chi_{\qf}^{\cU}(\cK) = \chi_{\full}^{\cU}(\pi^{-1}(\cK)).
		\]
	\end{lemma}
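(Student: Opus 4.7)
The plan is to exploit the fact that the quantifier-free type is simply the image of the full type under the restriction map $\pi$, so microstates for a quantifier-free neighborhood coincide with microstates for its $\pi$-preimage; the only content is a topological argument that the two infima (over neighborhoods upstairs vs.\ downstairs) match. Concretely, for any $\bY\in M_n(\C)^d$ we have $\tp_{\qf}^{M_n(\C)}(\bY)=\pi(\tp^{M_n(\C)}(\bY))$, so for every $\cU\subseteq\mathbb{S}_{\qf,d,r}(\rT_{\tr})$,
\[
\Gamma_r^{(n)}(\cU)=\Gamma_r^{(n)}(\pi^{-1}(\cU)),
\]
where on the left $\Gamma_r^{(n)}(\cU)$ uses quantifier-free types and on the right uses full types. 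It suffices to prove the equality at each fixed radius $r$, since then taking the supremum over $r$ yields the statement.

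For the inequality $\chi_{\full,r}^{\cU}(\pi^{-1}(\cK))\leq\chi_{\qf,r}^{\cU}(\cK)$, I would fix any open neighborhood $\cU$ of $\cK$ in $\mathbb{S}_{\qf,d,r}(\rT_{\tr})$; by continuity of $\pi$, its preimage $\pi^{-1}(\cU)$ is an open neighborhood of $\pi^{-1}(\cK)$ in $\mathbb{S}_{d,r}(\rT_{\tr})$, and the microstate identity above makes the ultrafilter limit of $\frac{1}{n^2}\log\vol\Gamma_r^{(n)}(\pi^{-1}(\cU))+2d\log n$ equal to the corresponding quantity for $\cU$. Taking the infimum over $\cU$ gives the inequality.

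For the reverse inequality, the key point is that $\pi\colon\mathbb{S}_{d,r}(\rT_{\tr})\to\mathbb{S}_{\qf,d,r}(\rT_{\tr})$ is a closed map, because $\mathbb{S}_{d,r}(\rT_{\tr})$ is compact and $\mathbb{S}_{\qf,d,r}(\rT_{\tr})$ is Hausdorff. Given an open neighborhood $\cO\supseteq\pi^{-1}(\cK)$ in $\mathbb{S}_{d,r}(\rT_{\tr})$, the set $\pi(\mathbb{S}_{d,r}(\rT_{\tr})\setminus\cO)$ is then closed in $\mathbb{S}_{\qf,d,r}(\rT_{\tr})$ and disjoint from $\cK$ (any common point would lift to a point of $\pi^{-1}(\cK)\setminus\cO$), so $\cU:=\mathbb{S}_{\qf,d,r}(\rT_{\tr})\setminus\pi(\mathbb{S}_{d,r}(\rT_{\tr})\setminus\cO)$ is an open neighborhood of $\cK$ with $\pi^{-1}(\cU)\subseteq\cO$. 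Hence $\Gamma_r^{(n)}(\cU)=\Gamma_r^{(n)}(\pi^{-1}(\cU))\subseteq\Gamma_r^{(n)}(\cO)$, giving $\chi_{\qf,r}^{\cU}(\cK)\leq\chi_{\full,r}^{\cU}(\cO)$, and taking the infimum over $\cO$ concludes.

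There is no real obstacle here; the only thing to keep straight is the reduction to a neighborhood downstairs using the closed-map property, which in turn relies on the closedness of $\cK$ (used to ensure the complement of $\pi(\mathbb{S}_{d,r}(\rT_{\tr})\setminus\cO)$ actually contains $\cK$). One might prefer to phrase the argument through the metrizable sequences of shrinking neighborhoods provided by Lemma \ref{lem:sequenceofneighborhoods}, but the direct open-cover/closed-map argument above is cleaner and does not require metrizability.
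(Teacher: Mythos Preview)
Your proof is correct. The key observation $\Gamma_r^{(n)}(\cU)=\Gamma_r^{(n)}(\pi^{-1}(\cU))$ is exactly what the paper uses as well, and both inequalities go through as you describe. One cosmetic point: you use $\cU$ for open neighborhoods in the quantifier-free type space, which clashes with the paper's use of $\cU$ for the ultrafilter; it would be cleaner to pick another letter.

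The route, however, differs from the paper's. The paper fixes $r$, chooses (by metrizability of $\mathbb{S}_{\qf,d,r}(\rT_{\tr})$) a nested sequence $\cO_k$ of open sets with $\overline{\cO_{k+1}}\subseteq\cO_k$ and $\bigcap_k\cO_k=\cK\cap\mathbb{S}_{\qf,d,r}(\rT_{\tr})$, pulls these back through $\pi$, and then invokes Lemma~\ref{lem:sequenceofneighborhoods} on both sides simultaneously to identify each infimum with $\inf_k$ of the common quantity $\chi_{\full,r}^{\cU}(\pi^{-1}(\cO_k))=\chi_{\qf,r}^{\cU}(\cO_k)$. Your argument instead handles the two inequalities separately and, for the harder direction, uses that $\pi$ restricted to $\mathbb{S}_{d,r}(\rT_{\tr})$ is a closed map (compact source, Hausdorff target) to manufacture, for each open $\cO\supseteq\pi^{-1}(\cK)$ upstairs, a saturated open neighborhood $\pi^{-1}(V)\subseteq\cO$. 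This is the standard ``tube lemma'' maneuver and avoids both metrizability and the auxiliary Lemma~\ref{lem:sequenceofneighborhoods}. The paper's approach has the advantage of reusing a lemma it needs elsewhere (for the existential case in Lemma~\ref{lem:existentialversusfull}); yours is more self-contained and slightly more general in that it would survive in a non-metrizable setting.
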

	
	\begin{proof}
		Fix $r \in (0,\infty)^{\cN}$, and let $\cK_r = \cK \cap \mathbb{S}_{\qf,d,r}(\rT_{\tr})$.  Since $\mathbb{S}_{\qf,d,r}(\rT_{\tr})$ is metrizable and $\cK \subseteq \mathbb{S}_{\qf,d,r}$ is closed, there exists a sequence of open sets $\cO_k$ in $\mathbb{S}_{\qf,d,r}(\rT_{\tr})$ such that $\overline{\cO_{k+1}} \subseteq \cO_k$ and $\bigcap_{k \in \N} \cO_k = \cK_r$ (and these can be extended to open sets in $\mathbb{S}_{\qf,d}(\rT_{\tr})$ since the inclusion of $\mathbb{S}_{\qf,d,r}(\rT_{\tr})$ is a topological embedding).  Now $\pi^{-1}(\cO_k)$ is open in $\mathbb{S}_{d,r}(\rT_{\tr})$ and $\overline{\pi^{-1}(\cO_{k+1})} \subseteq \pi^{-1}(\overline{\cO_{k+1}}) \subseteq \pi^{-1}(\cO_k)$ and $\bigcap_{k \in \N} \pi^{-1}(\cO_k) = \pi^{-1}(\cK_r)$.  Note that $\Gamma_r^{(n)}(\cO_k) = \Gamma_r^{(n)}(\pi^{-1}(\cO_k))$.  Thus, using the previous lemma,
		\begin{align*}
			\chi_{\qf,r}^{\cU}(\cK) &= \chi_{\qf,r}^{\cU}(\cK_r) \\
			&= \inf_{k \in \N} \chi_{\qf,r}^{\cU}(\cO_k) \\
			&= \inf_{k \in \N} \chi_{\full,r}^{\cU}(\pi^{-1}(\cO_k)) \\
			&= \chi_{\full,r}^{\cU}(\pi^{-1}(\cK_r)) \\
			&= \chi_{\full,r}^{\cU}(\pi^{-1}(\cK)).
		\end{align*}
		Taking the supremum over $r$ completes the argument.
	\end{proof}
	
	In particular, by combining this with the variational principle (Proposition \ref{prop:variational}), we obtain the following corollary.
	
	\begin{corollary} \label{cor:qfvariational}
		Let $\pi: \mathbb{S}_d(\rT_{\tr}) \to \mathbb{S}_{\qf,d}(\rT_{\tr})$ be the restriction map.  If $\mu \in \mathbb{S}_{\qf,d}(\rT_{\tr})$, then
		\[
		\chi_{\qf}^{\cU}(\mu) = \sup_{\nu \in \pi^{-1}(\mu)} \chi_{\full}^{\cU}(\nu).
		\]
	\end{corollary}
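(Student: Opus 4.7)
The plan is to chain together the two preceding results with essentially no additional work. First, I would observe that the singleton $\{\mu\} \subseteq \mathbb{S}_{\qf,d}(\rT_{\tr})$ is closed, since the space of quantifier-free types carries a Hausdorff (weak-$*$) topology. By definition, $\chi_{\qf}^{\cU}(\mu) = \chi_{\qf}^{\cU}(\{\mu\})$, and Lemma \ref{lem:qfentropyrelationship} applied to the closed set $\cK = \{\mu\}$ immediately gives
\[
\chi_{\qf}^{\cU}(\mu) = \chi_{\full}^{\cU}(\pi^{-1}(\{\mu\})).
\]

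Next, I would invoke the fact that $\pi: \mathbb{S}_d(\rT_{\tr}) \to \mathbb{S}_{\qf,d}(\rT_{\tr})$ is continuous (noted in the subsection on types and quantifier-free types right before the ``definable functions'' subsection), so $\pi^{-1}(\{\mu\})$ is a closed subset of $\mathbb{S}_d(\rT_{\tr})$. The variational principle (Proposition \ref{prop:variational}), specialized to a closed set, then yields
\[
\chi_{\full}^{\cU}(\pi^{-1}(\{\mu\})) = \sup_{\nu \in \operatorname{cl}(\pi^{-1}(\{\mu\}))} \chi_{\full}^{\cU}(\nu) = \sup_{\nu \in \pi^{-1}(\mu)} \chi_{\full}^{\cU}(\nu),
\]
and combining the two displayed equations finishes the proof.

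Since both ingredients are already established, there is no real obstacle here; the only point worth double-checking is that Lemma \ref{lem:qfentropyrelationship} is being applied to a closed set (hence the need to note that $\{\mu\}$ is closed) and that the variational principle is being applied to a closed set (hence the need for continuity of $\pi$). Both verifications are trivial, so I expect the proof to be no longer than a few lines.
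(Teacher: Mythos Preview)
Your proposal is correct and matches the paper's own argument exactly: the paper simply states that the corollary follows by combining Lemma~\ref{lem:qfentropyrelationship} (applied to the closed singleton $\{\mu\}$) with the variational principle (Proposition~\ref{prop:variational}) applied to the closed set $\pi^{-1}(\mu)$. Your write-up spells out precisely these two steps, including the minor verifications that the relevant sets are closed.
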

	
	\begin{remark} \label{rem:qfmaximizer}
		Furthermore, if $r$ is large enough that $\mu \in \mathbb{S}_{\qf,d,r}$, then $\pi^{-1}(\mu) \subseteq \mathbb{S}_{d,r}$.  Hence, for $\nu \in \pi^{-1}(\mu)$, we have $\chi_{\full,r+1}^{\cU}(\nu) = \chi_{\full}^{\cU}(\mu)$ by Remark \ref{rem:radiusindependent}.  Finally, by Lemma \ref{lem:USC}, $\chi^{\cU}$ achieves a maximum on $\pi^{-1}(\cK)$.  It follows that
		\[
		\chi_{\qf}^{\cU}(\mu) = \max_{\nu \in \pi^{-1}(\mu)} \chi_{\full}^{\cU}(\nu).
		\]
	\end{remark}

	\subsection{Existential types}
	
	\begin{definition}
		Let $I$ be an index set, and consider variables $\mathbf{x} = (x_i)_{i \in I}$ from sorts $(S_i)_{i \in I}$.  An \emph{existential formula} in the language $\cL$ is a formula of the form
		\[
		\phi(\mathbf{x}) = \inf_{y_1 \in D_1, \dots, y_k \in D_k} \psi(\mathbf{x},y_1,\dots,y_k),
		\]
		where $\psi$ is a quantifier-free formula and $D_1$, \dots, $D_k$ are domains of quantification in the appropriate sorts.  Similarly, we say that $\phi$ is an \emph{existential definable predicate relative to $\rT$} if
		\[
		\phi^{\cM}(\bX) = \inf_{\bY \in \prod_{j \in \N} D_j^{\cM}} \psi^{\cM}(\bX,\bY)
		\]
		for $\cM \models \rT$, where $\psi$ is a quantifier-free definable predicate.
	\end{definition}
	
	Since a quantifier-free definable predicate can be approximated on each product of domains by a quantifier-free formula in finitely many variables, one can argue that an existential definable predicate can be approximated uniformly on each product of domains of quantification by an existential formula (and this is why it is a definable predicate to begin with).
	
	\begin{definition}
		Let $\cM$ be an $\cL$-structure, $\bS = (S_1,\dots,S_d)$ a $d$-tuple of sorts, and $\bX \in \prod_{j=1}^d S_j^{\cM}$.  Let $\cF_{\exists,\bS}$ denote the space of existential formulas.  The \emph{existential type} $\tp_{\exists}^{\cM}(\bX)$ is the map
		\[
		\tp_{\exists}^{\cM}(\bX): \cF_{\exists,\bS} \to \R, \phi \mapsto \phi^{\cM}(\bX).
		\]
		If $\rT$ is an $\cL$-theory, we denote the set of existential types that arise in models of $\rT$ by $\mathbb{S}_{\exists,\bS}(\rT)$.
	\end{definition}
	
	The topology for existential types, however, is not simply the weak-$*$ topology on $\mathbb{S}_{\exists,\bD}(\rT)$ for each tuple of domains.  Rather, we define neighborhoods of a type $\mu = \tp^{\cM}(\bX)$ using sets of the form $\{\nu: \nu(\phi) < \mu(\phi) + \epsilon\}$.  The idea is that if $\phi^{\cM}(\bX) = \inf_{\bY \in \prod_{j \in \N} D_j^{\cM}} \psi^{\cM}(\bX,\bY)$ for some quantifier-free definable predicate $\phi$, then $\mu(\phi) \leq c$ means that there exists $\bY$ such that $\psi^{\cM}(\bX,\bY) < c + \delta$ for any $\delta > 0$.  Thus, a neighborhood corresponds to types $\nu$ where there exists $\bY$ that gets within $\epsilon$ of the infimum achieved for $\mu$.
	
	\begin{definition} \label{def:existentialtopology}
		Let $\rT$ be an $\cL$-theory, $\bS$ an $d$-tuple of sorts, and $\bD \in \prod_{j=1}^d \cD_{S_j}$.  We say that $\cO \subseteq \mathbb{S}_{\exists,\bD}(\rT)$ is \emph{open} if for every $\mu \in \cO$, there exist existential formulas $\phi_1$, \dots, $\phi_k$ and $\epsilon_1$, \dots, $\epsilon_k > 0$ such that
		\[
		\{\nu \in \mathbb{S}_{\exists,\bD}(\rT): \nu(\phi_j) < \mu(\phi_j) + \epsilon_j \text{ for } j = 1, \dots, k\} \subseteq \cO.
		\]
		Moreover, we say that $\cO \subseteq \mathbb{S}_{\exists,\bS}(\rT)$ is open if $\cO \cap \mathbb{S}_{\exists,\bD}(\rT)$ is open in $\mathbb{S}_{\exists,\bD}(\rT)$ for all $\bD \in \prod_{j \in \N} \cD_{S_j}$.
	\end{definition}
	
	\begin{observation}~
		\begin{itemize}
			\item Any set of the form $\{\nu: \nu(\phi_1) < c_1, \dots, \nu(\phi_k) < c_k\}$, where $\phi_1$, \dots, $\phi_k$ are existential formulas, is open in $\mathbb{S}_{\exists,\bS}(\rT)$.
			\item The same holds if $\phi_j$ is an existential definable predicate rather than existential formula, since it can be uniformly approximated by existential formulas on each product of domains of quantification.  Hence existential definable predicates may be used in Definition \ref{def:existentialtopology} without changing the definition.
			\item The inclusion $\mathbb{S}_{\exists,\bD}(\rT) \to \mathbb{S}_{\exists,\bS}(\rT)$ is a topological embedding since each of the basic open sets in $\mathbb{S}_{\exists,\bD}(\rT)$ given by $\nu(\phi_j) < \mu(\phi_j) + \epsilon_j$ for $j = 1$, \dots, $k$ extends to an open set in $\mathbb{S}_{\exists,\bS}(\rT)$.
			\item The restriction map $\pi_{\exists}: \mathbb{S}_{\bS}(\rT) \to \mathbb{S}_{\exists,\bS}(\rT)$ is continuous.
		\end{itemize}
	\end{observation}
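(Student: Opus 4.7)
The plan is to verify each bullet in turn, mostly by unfolding Definition \ref{def:existentialtopology}. For the first item, given $\mu \in U := \{\nu : \nu(\phi_j) < c_j,\ j=1,\dots,k\}$, I would simply set $\epsilon_j := c_j - \mu(\phi_j) > 0$; then the basic neighborhood $\{\nu : \nu(\phi_j) < \mu(\phi_j) + \epsilon_j\}$ witnessing openness at $\mu$ is exactly contained in $U \cap \mathbb{S}_{\exists,\bD}(\rT)$ for any tuple $\bD$, so $U$ is open at each of its points and hence open in every $\mathbb{S}_{\exists,\bD}(\rT)$.

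For the second item, I would use the approximation fact stated just above Definition \ref{def:existentialtopology}: any existential definable predicate $\phi$ is uniformly approximated on $\prod_j D_j$ by existential formulas. Given a set $U$ defined via existential definable predicates $\phi_1,\dots,\phi_k$ with base point $\mu$ and radii $\epsilon_j$, pick existential formulas $\psi_j$ with $\sup_{\cM,\bX}|\phi_j^{\cM}(\bX) - \psi_j^{\cM}(\bX)| < \epsilon_j/3$ on $\bD$. Then $V := \{\nu : \nu(\psi_j) < \mu(\psi_j) + \epsilon_j/3\}$ satisfies $\mu \in V \subseteq U \cap \mathbb{S}_{\exists,\bD}(\rT)$ by the triangle inequality, so $U$ is open in the topology generated by formulas. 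The reverse inclusion is trivial, so the two topologies coincide.

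The third item has two halves. Continuity of $\mathbb{S}_{\exists,\bD}(\rT) \hookrightarrow \mathbb{S}_{\exists,\bS}(\rT)$ is immediate from the definition of the topology on $\mathbb{S}_{\exists,\bS}(\rT)$. For openness onto the image, I would take a basic open $V = \{\nu \in \mathbb{S}_{\exists,\bD}(\rT): \nu(\phi_j) < \mu(\phi_j) + \epsilon_j\}$ and extend it to $U := \{\nu \in \mathbb{S}_{\exists,\bS}(\rT): \nu(\phi_j) < \mu(\phi_j) + \epsilon_j\}$; to verify $U$ is open in $\mathbb{S}_{\exists,\bS}(\rT)$ I would check that $U \cap \mathbb{S}_{\exists,\bD'}(\rT)$ is open for every $\bD'$, which is exactly the content of the first bullet. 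Then $V = U \cap \mathbb{S}_{\exists,\bD}(\rT)$, giving the embedding property.

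For the fourth item, I unfold the definitions: to show $\pi_{\exists}^{-1}(U)$ is open in $\mathbb{S}_{\bS}(\rT)$ for $U$ open in $\mathbb{S}_{\exists,\bS}(\rT)$, it suffices to check $\pi_{\exists}^{-1}(U) \cap \mathbb{S}_{\bD}(\rT)$ is open in $\mathbb{S}_{\bD}(\rT)$ for each $\bD$. Since every existential formula is in particular a formula, it defines a continuous function on $\mathbb{S}_{\bD}(\rT)$ via Proposition \ref{prop:defpredcontinuous}, so preimages of basic open sets $\{\eta(\phi_j) < c_j\}$ are open. No real obstacle arises; the only care needed is in juggling the layered ``union'' topologies on $\mathbb{S}_{\bS}(\rT)$ and $\mathbb{S}_{\exists,\bS}(\rT)$ together with the one-sided nature of the basic existential neighborhoods, which is what makes each verification a short bookkeeping exercise rather than a substantive argument.
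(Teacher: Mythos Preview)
Your proposal is correct and matches the paper's approach: the paper states this as an Observation with the justifications embedded inline in the bullet points themselves (no separate proof is given), and you have simply unpacked those hints into the natural $\epsilon$-arguments. The choices of $\epsilon_j = c_j - \mu(\phi_j)$ in the first item, the $\epsilon_j/3$ approximation in the second, the extension via the first bullet in the third, and the use of continuity of formulas on $\mathbb{S}_{\bD}(\rT)$ in the fourth are exactly the intended verifications.
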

	
	\begin{remark}
		Like the Zariski topology on the space of ideals in a commutative ring, the topology on $\mathbb{S}_{\exists,\bS}(\rT)$ is often non-Hausdorff.  Indeed, one can check that the intersection of all neighborhoods of an existential type $\mu$ is
		\begin{equation} \label{eq:nbhd}
			\cK_\mu = \{\nu: \nu(\phi) \leq \mu(\phi) \text{ for all } \phi \in \cF_{\exists,\bS}\}.
		\end{equation}
		We say that $\nu$ \emph{extends} $\mu$ if $\nu(\phi) \leq \mu(\phi)$ for all existential formulas $\phi$, which is equivalent to saying that $\mu(\phi) = 0$ implies that $\nu(\phi) = 0$ for all $\phi \in \cF_{\exists,\bS}$ (since $\max(\phi - c,0)$ is an existential formula if $\phi$ is).  Then $\{\mu\}$ is closed if and only if it does have any proper extension, or it is \emph{maximal}.  These closed points correspond to existential types from existentially closed models (see \cite[\S 6.2]{Goldbring2021enforceable}), and such maximal existential types in $\mathbb{S}_{\exists,\bD}(\rT)$ form a compact Hausdorff space.  However, our present goal is to work with general tracial $\mathrm{W}^*$-algebras, not only those that are existentially closed.
	\end{remark}
	
	\subsection{Entropy for existential types}
	
	Here we define the entropy for existential types.  In the next subsection, we will show that this corresponds to Voiculescu's $\chi(\mathbf{X}:\cM)$.  Here $\mathbb{S}_{\exists,d}(\rT_{\tr})$ will denote the spaces of existential types for $d$-tuples from tracial $\mathrm{W}^*$-algebras and $\mathbb{S}_{\exists,d,r}(\rT_{\tr})$ will denote those arising from operators bounded in operator norm by $r$.
	
	\begin{definition}
		For $\cK \subseteq \mathbb{S}_{\exists,d}(\rT_{\tr})$ and $r \in (0,\infty)$, let $\Gamma_{r}^{(n)}(\cK) = \{ \bX \in \prod_{j \in \N} D_{r}^{M_n(\C)}: \tp_{\exists}^{M_n(\C)}(\bX) \in \cK\}$, and define
		\[
		\chi_{\exists,r}^{\cU}(\cK) = \inf_{\cO \supseteq \cK \text{ open}} \lim_{n \to \cU} \left( \frac{1}{n^2} \log \vol \Gamma_{r}^{(n)}(\cO) + 2d \log n \right).
		\]
		Then let
		\[
		\chi_{\exists}^{\cU}(\cK) = \sup_{r \in (0,\infty)} \chi_{\exists,r}^{\cU}(\cK).
		\]
	\end{definition}
	
	Because of the non-Hausdorff nature of $\mathbb{S}_{\exists}(\rT_{\tr})$, we will restrict our attention to an individual existential type rather than for a closed set of existential types.
	
	\begin{lemma} \label{lem:existentialversusfull}
		Let $\mu \in \mathbb{S}_{\exists,d}(\rT_{\tr})$.  Let $\pi_\exists: \mathbb{S}(\rT_{\tr}) \to \mathbb{S}_{\exists,d}(\rT_{\tr})$ be the canonical restriction map.  Then
		\[
		\chi_{\exists}^{\cU}(\mu) = \chi_{\full}^{\cU}\left(\pi^{-1}\left(\cK_\mu\right) \right) =  \max_{\nu \in \pi^{-1}(\cK_\mu)} \chi_{\full}^{\cU}(\nu),
		\]
		where $\cK_\mu$ is given by \eqref{eq:nbhd}.
	\end{lemma}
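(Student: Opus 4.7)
The plan is to handle the two equalities separately. For the first equality, we mimic the argument used for Lemma \ref{lem:qfentropyrelationship} (which related $\chi_\qf^\cU$ to $\chi_\full^\cU$), but now being careful about the non-Hausdorff nature of the existential topology; and for the second equality, we apply the variational principle after verifying that $\pi^{-1}(\cK_\mu)$ is a closed subset of some $\mathbb{S}_{d,r}(\rT_{\tr})$.

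The central identity connecting the two notions of entropy is that for any $\cO \subseteq \mathbb{S}_{\exists,d}(\rT_{\tr})$, the microstate spaces agree: $\Gamma_r^{(n)}(\cO) = \Gamma_r^{(n)}(\pi^{-1}(\cO))$, since $\tp_\exists^{M_n(\C)}(\bY) \in \cO$ if and only if $\tp^{M_n(\C)}(\bY) \in \pi^{-1}(\cO)$. For the inequality $\chi_\full^\cU(\pi^{-1}(\cK_\mu)) \leq \chi_\exists^\cU(\mu)$, observe that every existential open neighborhood $\cO$ of $\mu$ contains $\cK_\mu$ (since $\cK_\mu$ is the intersection of all such neighborhoods), so $\pi^{-1}(\cO)$ is a full-topology open neighborhood of $\pi^{-1}(\cK_\mu)$, and the identification of microstate volumes gives the inequality after taking infima. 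The reverse inequality is the step I expect to take the most care. Given a full-topology open neighborhood $\cO'$ of $\pi^{-1}(\cK_\mu) \cap \mathbb{S}_{d,r}(\rT_{\tr})$, for each $\rho$ in the compact set $\mathbb{S}_{d,r}(\rT_{\tr}) \setminus \cO'$ there exists an existential formula $\phi_\rho$ and $\epsilon_\rho > 0$ such that $\rho(\phi_\rho) > \mu(\phi_\rho) + \epsilon_\rho$ (since $\rho \notin \pi^{-1}(\cK_\mu)$). Because each $\phi_\rho$ is in particular a definable predicate, the sets $\{\sigma : \sigma(\phi_\rho) > \mu(\phi_\rho) + \epsilon_\rho/2\}$ are open in the weak-$*$ topology on $\mathbb{S}_{d,r}(\rT_{\tr})$ and cover $\mathbb{S}_{d,r}(\rT_{\tr}) \setminus \cO'$; extracting a finite subcover indexed by $\rho_1,\dots,\rho_k$ and intersecting the corresponding existential basic opens at $\mu$ yields an existential neighborhood $\cO$ of $\mu$ whose pullback $\pi^{-1}(\cO) \cap \mathbb{S}_{d,r}(\rT_{\tr})$ is contained in $\cO'$. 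Thus $\Gamma_r^{(n)}(\cO) \subseteq \Gamma_r^{(n)}(\cO')$ and the inequality follows.

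For the second equality, I first argue that $\pi^{-1}(\cK_\mu)$ is closed in the weak-$*$ topology on $\mathbb{S}_d(\rT_{\tr})$: it is the intersection, over existential formulas $\phi$, of the closed sets $\{\rho : \rho(\phi) \leq \mu(\phi)\}$, each of which is closed because existential formulas are definable predicates and hence continuous on $\mathbb{S}_d(\rT_{\tr})$ by Proposition \ref{prop:defpredcontinuous}. Moreover, for every quantifier-free formula $\phi$, both $\phi$ and $-\phi$ are existential, so any $\nu \in \pi^{-1}(\cK_\mu)$ satisfies $\nu(\phi) = \mu(\phi)$; in particular the quantifier-free moments of a realization of $\nu$ coincide with those of a realization of $\mu$, so the operator norms of tuples realizing $\nu$ are determined by $\mu$. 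This puts $\pi^{-1}(\cK_\mu)$ inside some $\mathbb{S}_{d,r}(\rT_{\tr})$ for $r$ at least the operator norm bound of $\mu$. Thus $\pi^{-1}(\cK_\mu)$ is a closed subset of the compact space $\mathbb{S}_{d,r}(\rT_{\tr})$, and the variational principle (Proposition \ref{prop:variational}) combined with the upper semi-continuity of $\chi_{\full,r}^\cU$ (Lemma \ref{lem:USC}) and Remark \ref{rem:radiusindependent} gives
\[
\chi_\full^\cU(\pi^{-1}(\cK_\mu)) = \sup_{\nu \in \pi^{-1}(\cK_\mu)} \chi_\full^\cU(\nu) = \max_{\nu \in \pi^{-1}(\cK_\mu)} \chi_\full^\cU(\nu),
\]
completing the proof. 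The main subtlety, as indicated above, is the compactness/finite-subcover step translating full-topology neighborhoods of $\pi^{-1}(\cK_\mu)$ into pullbacks of existential neighborhoods of $\mu$.
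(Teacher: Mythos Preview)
Your proof is correct. The overall structure---the first inequality via $\cK_\mu$ being contained in every existential neighborhood of $\mu$, the second equality via the variational principle after checking that $\pi^{-1}(\cK_\mu)$ is closed and contained in some $\mathbb{S}_{d,r}(\rT_{\tr})$---matches the paper. The one genuine difference is in the reverse inequality $\chi_{\exists,r}^{\cU}(\mu) \leq \chi_{\full,r}^{\cU}(\pi^{-1}(\cK_\mu))$. The paper chooses a countable dense family $(\phi_j)_{j\in\N}$ of existential definable predicates on $D_r^d$, builds the nested existential neighborhoods $\cO_k = \{\nu : \nu(\phi_j) < \mu(\phi_j) + 1/k \text{ for } j \leq k\}$ with $\bigcap_k \cO_k = \cK_\mu$, and then invokes Lemma~\ref{lem:sequenceofneighborhoods} on the sequence $\pi_\exists^{-1}(\cO_k)$. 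You instead argue directly: given any full-topology open $\cO' \supseteq \pi^{-1}(\cK_\mu)\cap\mathbb{S}_{d,r}(\rT_{\tr})$, cover the compact complement $\mathbb{S}_{d,r}(\rT_{\tr})\setminus\cO'$ by finitely many sets $\{\sigma : \sigma(\phi_{\rho_i}) > \mu(\phi_{\rho_i}) + \epsilon_{\rho_i}/2\}$ and take $\cO = \bigcap_i \{\nu : \nu(\phi_{\rho_i}) < \mu(\phi_{\rho_i}) + \epsilon_{\rho_i}/2\}$, so that $\pi^{-1}(\cO)\cap\mathbb{S}_{d,r}(\rT_{\tr}) \subseteq \cO'$ and hence $\Gamma_r^{(n)}(\cO) \subseteq \Gamma_r^{(n)}(\cO')$. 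Your route is slightly more self-contained (it avoids appealing to separability of the space of existential predicates and to Lemma~\ref{lem:sequenceofneighborhoods}), while the paper's route has the virtue of reusing the nested-neighborhood machinery already set up for Lemma~\ref{lem:qfentropyrelationship}. Your justification that $\pi^{-1}(\cK_\mu)$ sits inside a fixed $\mathbb{S}_{d,r}(\rT_{\tr})$---because quantifier-free formulas and their negatives are both existential, forcing the quantifier-free type (hence the operator norms) to agree with $\mu$---is a nice touch that the paper leaves implicit.
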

	
	\begin{proof}
		Fix $r \in (0,\infty)$.  If $\cO$ is a neighborhood of $\mu$ in $\mathbb{S}_{\exists,d}(\rT_{\tr})$, then it contains $\cK_\mu$, and hence $\pi_\exists^{-1}(\cO)$ is a neighborhood of $\pi^{-1}(\cK_\mu)$ in $\mathbb{S}(\rT_{\tr})$.  Moreover, $\Gamma_r^{(n)}(\cO) = \Gamma_r^{(n)}(\pi_\exists^{-1}(\cO))$, hence
		\[
		\chi_{\full,r}^{\cU}\left(\pi_\exists^{-1}\left( \cK_\mu \right) \right) \leq \chi_{\exists,r}^{\cU}(\mu).
		\]
		
		It remains to show the reverse inequality.  Since the space of definable predicates on $D_r^d$ relative to $\rT_{\tr}$ is separable with respect to the uniform metric, so is the space of existential definable predicates.  Let $(\phi_j)_{j \in \N}$ be a sequence of existential definable predicates that are dense in this space.  Let
		\[
		\cO_k = \left\{\nu \in \mathbb{S}_{\exists,d,r}(\rT_{\tr}): \nu(\phi_j) < \mu(\phi_j) + \frac{1}{k} \text{ for } j \leq k \right\}.
		\]
		Note that
		\[
		\bigcap_{k \in \N} \cO_k = \{\nu \in \mathbb{S}_{\exists,d,r}(\rT_{\tr}): \nu(\phi_k) \leq \mu(\phi_k) \text{ for } k \in \N\} = \cK_\mu.
		\]
		Moreover,
		\[
		\overline{\pi_\exists^{-1}(\cO_{k+1})} \subseteq \left\{\nu \in \mathbb{S}_d(\rT_{\tr}): \nu(\phi_j) \leq \mu(\phi_j) + \frac{1}{k+1} \text{ for } j \leq k + 1 \right\} \subseteq \pi_\exists^{-1}(\cO_k).
		\]
		Therefore, by Lemma \ref{lem:sequenceofneighborhoods} applied to $\pi_\exists^{-1}(\cO_k)$, we have
		\[
		\chi_{\exists,r}^{\cU}(\mu) \leq \inf_{k \in \N} \chi_{\exists,r}^{\cU}(\cO_k) = \inf_{k \in \N} \chi_{\full,r}^{\cU}(\pi_\exists^{-1}(\cO_k)) = \chi_r^{\cU}(\pi_\exists^{-1}(\cK)),
		\]
		where the last equality follows from the density of $\{\phi_k: k \in \N\}$.  Thus, $\chi_{\exists,r}^{\cU}(\mu) = \chi_{\full,r}^{\cU}(\pi_\exists^{-1}(\cK_\mu))$.  Taking the supremum over $r$ yields the first asserted equality $\chi_{\exists}^{\cU}(\mu) = \chi_{\full}^{\cU}(\pi_\exists^{-1}(\cK_\mu))$.  By applying the variational principle (Proposition \ref{prop:variational}) to the closed set $\pi_\exists^{-1}(\cK_\mu)$, this is equal to $\sup_{\nu \in \pi_\exists^{-1}(\cK_\mu)} \chi_{\full}^{\cU}(\nu)$.  Then the same argument as in Remark \ref{rem:qfmaximizer} shows that the supremum is in fact a maximum.
	\end{proof}
	
	\subsection{Existential entropy and entropy in the presence} \label{subsec:inthepresence}
	
	Let us finally explain why the existential entropy defined here agrees with (the ultrafilter version of) Voiculescu's $\chi^{\cU}(\mathbf{X},\cM)$.  The definition is given in terms of Voiculescu's microstate spaces for some $d$-tuple $\bX$ in the presence of some $m$-tuple $\bY$.
	
	\begin{definition}[{Voiculescu \cite[Definition 1.1]{VoiculescuFE3}}] \label{def:Voiculescupresence}
		Let $\cM$ be a tracial $\mathrm{W}^*$-algebra.  Let $d$, $m \in \N$ and let $\bX \in M^d$ and $\bY \in M^m$.  Let $r \in (0,\infty)$.  Let $\mathbb{S}_{\qf,d+m,r}(\rT_{\tr})$ be the set of quantifier-free types of tuples from $\prod_{i \in I} D_{r} \times \prod_{j \in J} D_r$ equipped with the weak-$*$ topology.  Let $p: M_n(\C)^{d+m} \to M_n(\C)^d$ be the canonical projection onto the first $d$ coordinates.  Then we define
		\[
		\chi_r^{\cU}(\bX:\bY) := \inf_{\cO \ni \tp_{\qf}^{\cM}(\bX,\bY)} \lim_{n \to \cU} \left( \frac{1}{n^2} \log \vol p[\Gamma_r^{(n)}(\cO)] + 2d \log n \right),
		\]
		where $\cO$ ranges over all neighborhoods of $\tp_{\qf}^{\cM}(\bX,\bY)$ in $\mathbb{S}_{\qf,d+m}(\rT_{\tr})$.  Then set
		\[
		\chi^{\cU}(\bX:\bY) := \sup_{r > 0} \chi_r^{\cU}(\bX:\bY).
		\]
		Finally, we define
		\[
		\chi^{\cM}(\bX: \cM) = \inf_{m \in \N} \inf_{\bY \in M^m} \chi^{\cU}(\bX: \bY).
		\]
	\end{definition}
	
	\begin{remark}
		It follows from \cite[Proposition 1.6]{VoiculescuFE3} that for finite tuples $\bX$, $\bY$, and $\bZ$,
		\[
		\chi^{\cU}(\bX: \bY,\bZ) \leq \chi^{\cU}(\bX: \bY)
		\]
		with equality when $\bZ$ comes from $\mathrm{W}^*(\bX,\bY)$.  As a consequence,
		\[
		\chi^{\cU}(\bX: \mathrm{W}^*(\bX,\bY)) = \chi^{\cU}(\bX:\bY).
		\]
		In fact, by a similar slightly more technical argument one can show that if $\cM$ is generated by $\bX$ and $Y_1$, $Y_2$, \dots, then
		\[
		\chi^{\cU}(\bX:\cM) = \lim_{k \to \infty} \chi^{\cU}(\bX:Y_1,\dots,Y_k).
		\]
	\end{remark}
	
	The idea behind why $\chi^{\cU}(\bX:\cM)$ corresponds to the entropy of $\tp_{\exists}^{\cM}(\bX)$ is that a matrix tuple $\bX'$ is in the projection $p[\Gamma_r^{(n)}(\cO)]$ if and only if \emph{there exists} some $\bY'$ such that $\tp_{\qf}^{M_n(\C)}(\bX',\bY') \in \cO$.  If $\bX', \bY'$ being in $\Gamma_r^{(n)}(\cO)$ can be detected by a quantifier-free formula being less than some $c$, then $\bX'$ being in $p[\Gamma_r^{(n)}(\cO)]$ can be detected by an existential formula.
	
	\begin{proposition} \label{prop:presenceequivalence}
		In the setup of Definition \ref{def:Voiculescupresence}, we have $\chi^{\cU}(\bX: \cM) = \chi_{\exists}^{\cU}(\tp_{\exists}^{\cM}(\bX))$.
	\end{proposition}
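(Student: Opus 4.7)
The plan is to prove both inequalities by translating between projected quantifier-free microstate spaces and existential-type microstate spaces. The bridge is the tautology
\[
\bX' \in p\bigl[\{(\bX', \bY') \in (D_r^{M_n(\C)})^{d+m} : \psi^{M_n(\C)}(\bX', \bY') < c\}\bigr] \iff \inf_{\bY' \in (D_r^{M_n(\C)})^m} \psi^{M_n(\C)}(\bX', \bY') < c,
\]
matching the projection of a basic quantifier-free microstate set with a basic open set in the existential-type topology. Throughout I would fix a single radius $r \geq \norm{\bX}$ and exploit the radius-stability of Voiculescu's entropy (Remark~\ref{rem:radiusindependent}) together with monotonicity of $\chi_{\exists,r}^{\cU}$ in $r$ to absorb the various $\sup_r$/$\inf_\bY$ operations at the end.

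For the inequality $\chi^{\cU}(\bX:\cM) \leq \chi_\exists^{\cU}(\mu)$, I would start with an arbitrary basic existential neighborhood $\cO_\exists$ of $\mu$ cut out by $\nu(\phi_j) < \mu(\phi_j) + \epsilon_j$ for existential formulas $\phi_j(\mathbf{x}) = \inf_{\mathbf{y}_j \in (D_r)^{m_j}} \psi_j(\mathbf{x}, \mathbf{y}_j)$. By definition of the infimum, for each $j$ I would pick a witness $\bY_j \in (D_r^{\cM})^{m_j}$ with $\psi_j^{\cM}(\bX, \bY_j) < \mu(\phi_j) + \epsilon_j/2$, and let $\bY$ be their concatenation. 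The basic quantifier-free neighborhood $\cO_\qf$ of $\tp_\qf^{\cM}(\bX, \bY)$ defined by $|\nu(\psi_j) - \psi_j^{\cM}(\bX, \bY_j)| < \epsilon_j/2$ then satisfies $p[\Gamma_r^{(n)}(\cO_\qf)] \subseteq \Gamma_r^{(n)}(\cO_\exists)$, because for $(\bX', \bY') \in \Gamma_r^{(n)}(\cO_\qf)$ one has $\phi_j^{M_n(\C)}(\bX') \leq \psi_j^{M_n(\C)}(\bX', \bY_j') < \mu(\phi_j) + \epsilon_j$ for each $j$. This gives $\chi^{\cU}(\bX:\bY) = \chi_r^{\cU}(\bX:\bY) \leq \chi_{\exists,r}^{\cU}(\cO_\exists)$; infimizing first over $\cO_\exists$, then over $\bY$ and $r$, yields the desired bound.

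For the reverse inequality $\chi_\exists^{\cU}(\mu) \leq \chi^{\cU}(\bX:\cM)$, I would fix any $\bY \in M^m$ and a basic quantifier-free neighborhood of $\tp_\qf^{\cM}(\bX, \bY)$ of the form $\{\nu : |\nu(\psi_j) - c_j| < \epsilon \text{ for } j = 1, \dots, k\}$ with $c_j := \psi_j^{\cM}(\bX, \bY)$. The key step is to encode the existence of a witness as an existential definable predicate:
\[
\Phi(\mathbf{x}) := \inf_{\mathbf{y} \in (D_r)^m} \max_{1 \leq j \leq k} |\psi_j(\mathbf{x}, \mathbf{y}) - c_j|.
\]
Since $\bY$ witnesses $\Phi^{\cM}(\bX) = 0$, one has $\mu(\Phi) = 0$, and the existential neighborhood $\cO_\exists := \{\nu : \nu(\Phi) < \epsilon/2\}$ satisfies $\Gamma_r^{(n)}(\cO_\exists) \subseteq p[\Gamma_r^{(n)}(\cO_\qf)]$: any $\bX'$ with $\Phi^{M_n(\C)}(\bX') < \epsilon/2$ admits some $\bY' \in (D_r^{M_n(\C)})^m$ with $\max_j |\psi_j^{M_n(\C)}(\bX', \bY') - c_j| < \epsilon$. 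This yields $\chi_{\exists,r}^{\cU}(\mu) \leq \chi_r^{\cU}(\bX:\bY) = \chi^{\cU}(\bX:\bY)$ for $r \geq \max(\norm{\bX}, \norm{\bY})$, and infimizing over $\bY$ (enlarging $r$ as necessary) completes the argument. The main technical care lies in verifying that $\Phi$ is genuinely an existential definable predicate, i.e.\ uniformly approximable on each domain by existential formulas with the real constants $c_j$ absorbed into the continuous connectives, and in cleanly tracking the $\sup_r$/$\inf_\bY$/$\inf_\cO$ interchanges via the radius-stability properties.
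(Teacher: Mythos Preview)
Your proposal is correct and follows essentially the same approach as the paper's proof: both directions are obtained by matching basic existential-type neighborhoods with projections of basic quantifier-free neighborhoods via the tautology you state, choosing witnesses $\bY_j$ for one direction and packaging the conditions into a single existential predicate $\Phi$ (the paper normalizes by $\epsilon_j$ and uses $\phi < 1$ where you use $\Phi < \epsilon/2$) for the other. The only minor points to tidy up are that the domains of quantification in the $\phi_j$ need not be $D_r$ a priori (the paper rescales the bound variables to arrange this), and that in your first inequality the tuple $\bY$ depends on $\cO_\exists$, so the correct logical order is $\chi^{\cU}(\bX:\cM) \leq \chi^{\cU}(\bX:\bY) \leq \chi_{\exists,r}^{\cU}(\cO_\exists)$ for each $\cO_\exists$, then infimize over $\cO_\exists$.
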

	
	\begin{proof}
		First, let us show that $\chi^{\cU}(\bX:\cM) \leq \chi_{\exists}^{\cU}(\tp_{\exists}^{\cM}(\bX))$.  Fix $r \in (0,\infty)$ with $\norm{X_j} < r$.  Let $\cO$ be a neighborhood of $\tp_{\exists}^{\cM}(\bX)$ in $\mathbb{S}_{\exists,d,r}(\rT_{\tr})$.  Then there exist existential definable predicates $\phi_1$, \dots, $\phi_k$ and $\epsilon_1$, \dots, $\epsilon_k > 0$ such that
		\[
		\{\nu \in \mathbb{S}_{\exists,d,r}(\rT_{\tr}): \nu(\phi_j) \leq \mu(\phi_j) + \epsilon_j \text{ for } j = 1, \dots, k\} \subseteq \cO.
		\]
		Since existential formulas are dense in the space of existential definable predicates, we can assume without loss of generality that the $\phi_j$'s are existential formulas, so there exist quantifier-free formulas $\psi_1$, \dots, $\psi_k$ such that
		\[
		\phi_j^{\cN}(\bX') = \inf_{\bY' \in \prod_{i=1}^{m_j} D_{t_{i,j}}^{\cN}} \psi_j^{\cN}(\bX',\bY') \text{ for all } \cN \models \rT_{\tr} \text{ and } \bX' \in N^{\N}.
		\]
		By rescaling the variables $\bY'$ in these formulas, we can assume without loss of generality that $t_{i,j} < r$.  Moreover, for our particular $\cM$ and $\bX$, there exists $\bY_j \in \prod_{i=1}^{m_j} D_{t_{i,j}}^{\cM}$ such that
		\[
		\psi_j^{\cM}(\bX,\bY_j) < \mu(\phi_j) + \epsilon_j.
		\]
		Then
		\[
		\cO' := \{\tp_{\qf}^{\cN}(\bX',\bY_1',\dots,\bY_k') \in \mathbb{S}_{\qf,d+m_1+\dots+m_k,r'}(\rT_{\tr}): \psi_j(\bX',\bY_j') < \mu(\phi_j) + \epsilon_j \text{ for } j = 1, \dots, k \}
		\]
		is a neighborhood of $\tp_{\qf}^{\cM}(\bX,\bY_1,\dots,\bY_k)$ in $\mathbb{S}_{\qf,d+m_1+\dots+m_k,r}(\rT_{\tr})$ such that
		\[
		p[\Gamma_r^{(n)}(\cO')] \subseteq \Gamma_r^{(n)}(\cO).
		\]
		Therefore,
		\[
		\vol p[\Gamma_r^{(n)}(\cO')] \leq \vol \Gamma_r^{(n)}(\cO),
		\]
		which implies
		\[
		\chi_r^{\cU}(\bX:\bY_1,\dots,\bY_k) \leq \lim_{n \to \cU} \left( \frac{1}{n^2} \log \vol p[\Gamma_r^{(n)}(\cO)] + 2d \log n \right) \leq \chi_{\exists,r}^{\cU}(\cO).
		\]
		By the same reasoning as \cite[Proposition 2.4]{VoiculescuFE2}, since $r > \norm{X_j}_\infty$ and $r > \norm{Y_{i,j}}_\infty$, we have
		\[
		\chi^{\cU}(\bX:\cM) \leq \chi^{\cU}(\bX:\bY_1,\dots,\bY_k) = \chi_r^{\cU}(\bX:\bY_1,\dots,\bY_k) \leq \chi_{\exists,r}^{\cU}(\cO).
		\]
		Since $\cO$ was an arbitrary neighborhood of $\tp_{\exists}^{\cM}(\bX)$, we have
		\[
		\chi^{\cU}(\bX:\cM) \leq \chi_{\exists,r}^{\cU}(\tp_{\exists}^{\cM}(\bX)) \leq \chi_{\exists}^{\cU}(\tp_{\exists}^{\cM}(\bX)).
		\]
		
		To prove the second inequality, fix $m$ and an $m$-tuple $\bY \in M^m$.  Let $\cO'$ be a neighborhood of $\tp_{\qf}^{\cM}(\bX,\bY)$.  By definition of the weak-$*$ topology on $\Sigma_{\qf,d+m,r}$, there exist quantifier-free formulas $\psi_1$, \dots, $\psi_k$ and $\epsilon_1$, \dots, $\epsilon_k > 0$ such that
		\[
		\{\sigma \in \mathbb{S}_{\qf,d+m,r}: |\sigma(\psi_j) - \psi_j^{\cM}(\bX,\bY)| < \epsilon_j \text{ for } j = 1, \dots, k\} \subseteq \cO'.
		\]
		Now consider the existential formula
		\[
		\phi^{\cN}(\bX') = \inf_{\bY' \in (D_r^{\cN})^m} \max_{j=1,\dots,k} \left( \frac{|\psi_j^{\cN}(\bX',\bY') - \psi_j^{\cM}(\bX,\bY)|}{\epsilon_j} \right)
		\]
		for every tracial $\mathrm{W}^*$-algebra $\cN$.  Note that $\phi^{\cN}(\bX') < 1$ implies that there exists some tuple $\bY'$ from $(D_r^{\cN})^m$ with $|\psi_j^{\cN}(\bX',\bY') - \psi_j^{\cM}(\bX,\bY)| < \epsilon_j$ for $j = 1, \dots, k$ and hence $\tp_{\qf}^{\cN}(\bX',\bY') \in \cO'$.  In particular, the set
		\[
		\cO := \{\mu \in \mathbb{S}_{\exists,d,r}(\rT_{\tr}): \mu(\phi) < 1\}
		\]
		is an open subset of $\mathbb{S}_{\exists,d,r}(\rT_{\tr})$ such that
		\[
		\Gamma_r^{(n)}(\cO) \subseteq p[\Gamma_r^{(n)}(\cO')].
		\]
		Because for every $\cO'$ there exists such an $\cO$, we obtain $\chi_{\exists,r}^{\cU}(\tp_{\exists}^{\cM}(\bX)) \leq \chi_r^{\cU}(\bX:\bY)$.  Then we take the supremum over $r$ and the infimum over $m$ and $\bY$ to complete the argument.
	\end{proof}
	
	\subsection{Proof of {Theorem \ref{thm:entropyembedding}}}
	
	Now we complete the proof of Theorem \ref{thm:entropyembedding}.  We state the theorem here with slight additional claims that we could not state earlier because we had not yet defined entropy for types.
	
	\begin{theorem}
		Let $\cM$ be a separable tracial $\mathrm{W}^*$-algebra and $\bX \in M^d$.  If $\chi^{\cU}(\bX: \cM) > -\infty$, then there exists an embedding of $\iota: \cM \to \cQ = \prod_{n \to \cU} M_n(\C)$ such that
		\[
		\chi^{\cU}(\iota(\bX): \cQ) = \chi_{\full}^{\cU}(\tp^{\cQ}(\iota(\bX))) = \chi^{\cU}(\bX: \cM).
		\]
	\end{theorem}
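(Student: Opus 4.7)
The plan is to chain together the reductions of \S\ref{sec:qfexists} with the realization result of \S\ref{sec:entropyfortypes}, and then promote a single matched tuple to a full embedding via countable saturation of $\cQ$. Writing $\mu_\exists := \tp_\exists^{\cM}(\bX)$, Proposition~\ref{prop:presenceequivalence} gives $\chi_\exists^{\cU}(\mu_\exists) = \chi^{\cU}(\bX:\cM) > -\infty$. Lemma~\ref{lem:existentialversusfull} then furnishes a full type $\nu \in \pi_\exists^{-1}(\cK_{\mu_\exists})$ achieving the maximum $\chi_{\full}^{\cU}(\nu) = \chi_\exists^{\cU}(\mu_\exists)$, and since this value is finite, Lemma~\ref{lem:ultraproductrealization} realizes $\nu$ as $\tp^{\cQ}(\bX')$ for some $\bX' \in Q^d$. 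The membership $\nu \in \pi_\exists^{-1}(\cK_{\mu_\exists})$ is exactly the statement that $\phi^{\cQ}(\bX') \leq \phi^{\cM}(\bX)$ for every existential formula $\phi$; applied to $\pm\psi$ for each quantifier-free $\psi$ this also yields $\tp_{\qf}^{\cQ}(\bX') = \tp_{\qf}^{\cM}(\bX)$.

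Next I would promote $\bX \mapsto \bX'$ to an embedding $\iota: \cM \to \cQ$ by a back-and-forth argument. Enumerate a countable $\norm{\cdot}_2$-dense subset $(Y_k)_{k \in \N}$ of $M$ and choose $Y_k' \in \cQ$ inductively, maintaining the invariant that $\tp_\exists^{\cQ}(\bX', Y_1', \ldots, Y_k')$ extends $\tp_\exists^{\cM}(\bX, Y_1, \ldots, Y_k)$. The base case $k = 0$ has already been established. For the inductive step, any finite list of existential constraints on $Y_k'$ over the parameters $(\bX', Y_1', \ldots, Y_{k-1}')$ can be repackaged as a single existential condition on $(\bX', Y_1', \ldots, Y_{k-1}')$ alone (by introducing fresh bound variables for the inner infima), a condition satisfied in $\cM$ by choosing $y_k = Y_k$ together with near-optimizers of the inner quantifiers, and therefore satisfied in $\cQ$ by the inductive hypothesis. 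Countable saturation of $\cQ = \prod_{n \to \cU} M_n(\C)$ then produces a genuine $Y_k'$ realizing the desired type. Since existential-type matching forces quantifier-free-type matching on each finite subtuple, the map $Y_k \mapsto Y_k'$ is qf-type preserving and extends by continuity to the embedding $\iota: \cM \to \cQ$ with $\iota(\bX) = \bX'$.

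Finally, the entropy equalities fall out of the construction. Since $\tp^{\cQ}(\iota(\bX)) = \nu$, we have $\chi_{\full}^{\cU}(\tp^{\cQ}(\iota(\bX))) = \chi_{\full}^{\cU}(\nu) = \chi^{\cU}(\bX:\cM)$. Proposition~\ref{prop:presenceequivalence} rewrites $\chi^{\cU}(\iota(\bX):\cQ)$ as $\chi_\exists^{\cU}(\tp_\exists^{\cQ}(\iota(\bX)))$, for which the lower bound $\chi_\exists^{\cU}(\tp_\exists^{\cQ}(\iota(\bX))) \geq \chi_{\full}^{\cU}(\nu)$ follows from Lemma~\ref{lem:existentialversusfull} because $\nu$ lies in $\pi_\exists^{-1}(\cK_{\tp_\exists^{\cQ}(\iota(\bX))})$, and the upper bound $\chi_\exists^{\cU}(\tp_\exists^{\cQ}(\iota(\bX))) \leq \chi_\exists^{\cU}(\mu_\exists)$ follows from the extension relation $\tp_\exists^{\cQ}(\iota(\bX))(\phi) \leq \mu_\exists(\phi)$, which forces every open neighborhood of $\mu_\exists$ to also be a neighborhood of $\tp_\exists^{\cQ}(\iota(\bX))$. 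The main obstacle in this proof is the back-and-forth step: the single-tuple existential-type inequality $\tp_\exists^{\cQ}(\bX') \leq \mu_\exists$ must be upgraded into a stage-by-stage scheme of realizations preserving the $\tp_\exists$-extension invariant, and the key technical point is packaging finite lists of existential constraints into a single existential formula in the parameters already chosen, so that countable saturation of $\cQ$ can actually be invoked.
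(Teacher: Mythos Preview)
Your argument is correct and reaches the same conclusion, but the construction of the embedding $\iota$ differs from the paper's. The paper avoids the inductive back-and-forth entirely: it fixes once and for all a countable generating $\N$-tuple $\bY$ for $\cM$, chooses a single nonnegative quantifier-free definable predicate $\phi$ whose zero set is exactly the quantifier-free type of $(\bX,\bY)$, and observes that $\psi(\mathbf{z}) := \inf_{\mathbf{w}} \phi(\mathbf{z},\mathbf{w})$ is an existential definable predicate with $\psi^{\cM}(\bX) = 0$, hence $\psi^{\cQ}(\bX') = 0$ by the $\cK_{\mu_\exists}$ condition. Writing $\bX' = [\bX^{(n)}]_{n\in\N}$ and using $\psi^{M_n(\C)}(\bX^{(n)}) \to 0$, the paper picks witnesses $\bY^{(n)}$ at the matrix level and sets $\bY' = [\bY^{(n)}]_{n\in\N}$, obtaining the full embedding in one shot. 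Your route is the standard model-theoretic one (maintain the existential-extension invariant and invoke countable saturation at each stage); the repackaging of finitely many existential constraints into a single existential formula works because the bound variables in the different $\phi_j$ are disjoint, so $\max_j \inf_{\mathbf{z}_j} \theta_j = \inf_{\mathbf{z}_1,\ldots,\mathbf{z}_m} \max_j \theta_j$. Your approach has the virtue of being agnostic to the matrix-ultraproduct structure of $\cQ$ (it uses only countable saturation), while the paper's is shorter and makes the role of the matrix approximants explicit. The final entropy-inequality chain is the same in both arguments.
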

	
	\begin{proof}
		Let $\pi_{\exists}$ be the projection from full types to existential types, and let $\mu = \tp_{\exists}^{\cM}(\bX)$.  By Lemma \ref{lem:existentialversusfull}, there exists $\nu \in \pi_{\exists}^{-1}(\cK_\mu)$ such that $\chi^{\cU}(\nu) = \chi_\exists^{\cU}(\mu)$.  By Lemma \ref{lem:ultraproductrealization}, there exists $\bX' \in \cQ^d$ with $\tp^{\cQ}(\bX') = \nu$.
		
		Let $\bY \in \prod_{j \in \N} D_R^{\cM}$ be an $\N$-tuple that generates $\cM$, and write $(\bX,\bY) = (X_1,\dots,X_d,Y_1,Y_2,\dots)$.  Since the space of quantifier-free types $\mathbb{S}_{\qf,\N,R}(\rT_{\tr})$ for $\N$-tuples is compact and metrizable, there exists a quantifier-free definable predicate $\phi \geq 0$ such that for tracial $\mathrm{W}^*$-algebras $\cN$ and for $\bZ \in \prod_{j=1}^d D_R^{\cN}$ and $\bW \in \prod_{j \in \N} D_R^{\cN}$, we have  $\phi^{\cN}(\bZ,\bW) = 0$ if and only if $\tp_{\qf}^{\cN}(\bZ,\bW) = \tp_{\qf}^{\cM}(\bX,\bY)$.  Note that
		\[
		\psi^{\cN}(\bZ) = \inf_{\bW \in \prod_{j \in \N} D_R^{\cN}} \phi(\bZ,\bW) 
		\]
		is an existential definable predicate over $\rT_{\tr}$ in the variables $Z_1,\dots,Z_d$, and
		\[
		0 \leq \psi^{\cQ}(\bX') \leq \psi^{\cM}(\bX) \leq \phi^{\cM}(\bX,\bY) = 0,
		\]
		where we have applied the fact that $\tp_{\exists}(\bX') = \pi_{\exists}(\nu) \in \cK_\mu$.
		
		Now we may write $\bX' = [\bX^{(n)}]_{n \in \N}$ where $\bX^{(n)} \in \prod_{j \in \N} D_{r}^{M_n(\C)}$ (this is well-known, and it follows for instance from the construction of $\bX'$ through Lemma \ref{lem:ultraproductrealization}).  Then
		\[
		\lim_{n \to \cU} \psi^{M_n(\C)}(\bX^{(n)}) = \psi^{\cQ}(\bX') = 0,
		\]
		hence there exists $\bY^{(n)} \in \prod_{j \in \N} D_{r'}^{M_n(\C)}$ such that
		\[
		\lim_{n \to \cU} \phi^{M_n(\C)}(\bX^{(n)},\bY^{(n)}) = 0.
		\]
		Let $\bY' = [\bY^{(n)}]_{n \in \N} \in \prod_{j \in \N} D_{r_j'}^{\cQ}$.  Then $\phi^{\cQ}(\bX',\bY') = 0$, and therefore, $\tp_{\qf}^{\cQ}(\bX',\bY') = \tp_{\qf}^{\cM}(\bX',\bY')$.  Hence, there exists an embedding $\iota: \cM \to \cQ$ with $\iota(\bX,\bY) = (\bX',\bY')$.
		
		By Lemma \ref{lem:existentialversusfull} and Proposition \ref{prop:presenceequivalence}, we have
		\[
		\chi_{\full}^{\cU}(\nu) = \chi_{\full}^{\cU}(\tp^{\cQ}(\bX')) \leq \chi_\exists^{\cU}(\tp_{\exists}^{\cQ}(\bX')) = \chi^{\cU}(\bX': \cQ) = \chi_\exists^{\cU}(\pi_\exists(\nu)).
		\]
		Meanwhile, since $K_{\pi_\exists(\nu)} \subseteq \cK_\mu$,
		\[
		\chi_\exists^{\cU}(\pi_\exists(\nu)) \leq \chi_{\exists}^{\cU}(\mu) = \chi^{\cU}(\bX: \cM).
		\]
		By our choice of $\nu$, $\chi_{\full}^{\cU}(\nu) = \chi_{\exists}^{\cU}(\mu)$, so all these inequalities are equalities.
	\end{proof}
	
	\begin{remark}
		We mentioned in \ref{subsec:entropyintro} that one can prove the existence of an embedding into a matrix ultraproduct that has trivial relative commutant by showing that a randomly chosen embedding has this property.  This is an example of a probabilistic argument--showing that there exists some $x$ in a given set satisfying some conditions by showing that a random $x$ satisfies these conditions with high (or at least positive) probability.  Our proof of Theorem \ref{thm:entropyembedding} is in spirit also a probabilistic argument.  The variational principle shows that there must be some $\mu \in \pi_{\exists}^{-1}(\cK_\mu)$ with entropy equal to $\chi_{\exists}^{\cU}(\mu)$.  Unwinding the proof of Proposition \ref{prop:variational}, our intuition is that an existential microstate space for $\mu$ can be covered by microstate spaces for finitely many $\nu \in \pi_{\exists}^{-1}(\mu)$ and so there must be some $\nu$ which carries part of the mass of the microstate space for $\mu$ that is not exponentially small compared to the whole.
	\end{remark}
	
	\section{Toward a notion of independence} \label{sec:independence}
	
	In this section, we explore possible ways of adapting free independence to the model-theoretic framework for tracial $\mathrm{W}^*$-algebras.  We state many related open questions at the intersection of random matrix theory and model theory.
	
	\subsection{Non-commutative probability viewpoint}
	
	The notion of free independence mentioned in the introduction leads to the notion of free convolution analogous to the convolution of two probability measures on $\R^d$ in classical probability.  Suppose we have two $*$-laws $\mu$, $\nu \in \mathbb{S}_{\qf,d}(\rT_{\tr})$.  If $\bX$ and $\bY$ are $d$-tuples from $\cA$ and $\cB$ with $\tp_{\qf}^{\cA}(\bX) = \mu$ and $\tp_{\qf}^{\cB}(\bY) = \nu$, then it turns out that $\tp_{\qf}^{\cA * \cB}(\bX + \bY)$ is uniquely determined by $\mu$ and $\nu$.  This quantifier-free type $\tp_{\qf}^{\cA * \cB}(\bX + \bY)$ is called the \emph{(additive) free convolution} of $\mu$ and $\nu$ and it is denoted $\mu \boxplus \nu$.
	
	\begin{question}
		Is there a model-theoretic analog of free independence?  Is there a notion of free convolution for full types?
	\end{question}
	
	A na{\"\i}ve approach to constructing free convolution for types would be as follows.  Consider types $\mu$ and $\nu$.   Let $\bX$ and $\bY$ be $d$-tuples from $\cA$ and $\cB$ respectively with $\tp^{\cA}(\bX) = \mu$ and $\tp^{\cB}(\bY) = \nu$.  Then we may try to define $\tp^{\cA}(\bX) + \tp^{\cB}(\bY)$ as $\tp^{\cA * \cB}(\bX + \bY)$; however, it is not clear whether this is well-defined.
	
	\begin{question}
		Suppose that $\tp^{\cA_1}(\bX) = \tp^{\cA_2}(\bX')$ and $\tp^{\cB_1}(\bY) = \tp^{\cB_2}(\bY')$.  Then does $\tp^{\cA_1 * \cB_1}(\bX + \bY) = \tp^{\cA_2 * \cB_2}(\bX' + \bY')$?
	\end{question}
	
	This in turn provokes an even more basic question about the relationship between free products and model theory.  Observe that $\tp^{\cA_1}(\bX) = \tp^{\cA_2}(\bX')$ implies that $\cA_1 \equiv \cA_2$ because every sentence can be considered as a formula in the free variables $x_1, \dots, x_d$ (which happens to be independent of those variables).
	
	\begin{question}
		Do free products preserve elementary equivalence?  In other words, if $\cA_1 \equiv \cA_2$ and $\cB_1 \equiv \cB_2$, then does it necessarily follow that $\cA_1 * \cA_2 \equiv \cB_1 * \cB_2$?
	\end{question}
	
	The analogous question for free products of groups was highly nontrivial, and was answered in the affirmative by Sela \cite[Theorem 7.1]{Sela2010}.  (One can also ask the analogous question but with tensor products rather than free products.)
	
	\subsection{Random matrix viewpoint}
	
	However, it is unclear whether the free product is even the `right' construction of independence for types from a random matrix viewpoint.  In the quantifier-free setting, free independence can be characterized through random matrix theory as follows.   Recall that if $X_n$ is a random variable on a probability space $(\Omega_n,\cF_n,P_n)$ with values in a topological space $\mathcal{X}$ and if $x \in \mathcal{X}$, we say that $X_n$ \emph{converges to $x$ in probability} if for every neighborhood $\cO$ of $x$, we have $P_n(X_n \in \cO) = P_n(X_n^{-1}(\cO)) \to 1$.  Similarly, if $X_n$ is a real random variable and $x \in \R$, we say that $\limsup_{n \to \infty} X_n \leq x$ in probability if for every $\epsilon > 0$, we have $\lim_{n \to \infty} P_n(X_n < x + \epsilon) = 1$.
	
	\begin{theorem}[{Voiculescu \cite{Voiculescu1991,Voiculescu1998}}] \label{thm:asymptoticfreeness}
		Let $\bX$ and $\bY$ be $d$-tuples in a tracial $\mathrm{W}^*$-algebra $\cM$ that are freely independent.  Let $\bX^{(n)}$ and $\bY^{(n)}$ be random variables in $M_n(\C)^d$ such that
		\begin{enumerate}[(1)]
			\item $\tp_{\qf}^{M_n(\C)}(\bX^{(n)}) \to \tp_{\qf}^{\cM}(\bX)$ and $\tp_{\qf}^{M_n(\C)}(\bY^{(n)}) \to \tp_{\qf}^{\cM}(\bY)$ in probability (as random elements of the space $\mathbb{S}_{\qf,d}(\rT_{\tr})$).
			\item For some constant $C$, we have $\limsup_{n \to \infty} \norm{X_j^{(n)}} \leq C$ and $\limsup_{n \to \infty} \norm{Y_j^{(n)}} \leq C$ in probability.
			\item For each $n \in \N$, $\bX^{(n)}$ and $\bY^{(n)}$ are classically independent random variables.
			\item The probability distributions of $\bX^{(n)}$ and of $\bY^{(n)}$ are invariant under conjugation by each $n \times n$ unitary matrix $U$.
		\end{enumerate}
		Then $\tp_{\qf}^{M_n(\C)}(\bX^{(n)},\bY^{(n)}) \to \tp_{\qf}^{\cM}(\bX,\bY)$ in probability.  Thus, in particular, $\tp_{\qf}^{M_n(\C)}(\bX^{(n)} + \bY^{(n)}) \to \tp_{\qf}^{\cM}(\bX) \boxplus \tp_{\qf}^{\cM}(\bY)$.
	\end{theorem}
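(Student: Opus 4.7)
The plan is to prove Voiculescu's theorem by the standard two-step reduction: first remove the randomness in favor of a single Haar unitary, then compute moments via Weingarten calculus. Since the tuples are (asymptotically) uniformly bounded, convergence of the quantifier-free type is equivalent to convergence of all $*$-moments; so the goal is to show that for every non-commutative $*$-polynomial $p$ in $d+d$ variables,
\[
\tr_n\bigl(p(\bX^{(n)},\bY^{(n)})\bigr) \longrightarrow \tau_{\cM}\bigl(p(\bX,\bY)\bigr)
\]
in probability. Using the uniform bound together with the classical independence of $\bX^{(n)}$ and $\bY^{(n)}$, one can truncate to the event where both tuples have operator norm at most $C+1$ without affecting the limit.

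Next I would exploit the unitary invariance of the law of $\bX^{(n)}$: since $\bX^{(n)}$ has the same distribution as $U^{(n)} \bX^{(n)} U^{(n)*}$ for any unitary $U^{(n)}$, and since $\bX^{(n)}$ is classically independent of $\bY^{(n)}$, we may enlarge the probability space to contain an independent Haar-distributed random unitary $U^{(n)}$ and replace $\bX^{(n)}$ by $U^{(n)} \bX^{(n)} U^{(n)*}$ without changing the joint law. The problem is thereby reduced to the following key lemma: if $\bA^{(n)}$ and $\bB^{(n)}$ are (possibly random, uniformly bounded) matrix tuples whose joint quantifier-free type converges in probability to some $\tp_{\qf}^{\cN}(\bA,\bB)$, and $U^{(n)}$ is Haar-distributed in $U(n)$ independently of $(\bA^{(n)},\bB^{(n)})$, then $U^{(n)}\bA^{(n)}U^{(n)*}$ becomes asymptotically freely independent from $\bB^{(n)}$. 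Applying this with $\bA^{(n)} = \bX^{(n)}$ and $\bB^{(n)} = \bY^{(n)}$, and recognizing that the alternating-centered-product characterization of freeness gives exactly the joint moments of freely independent copies of $\tp_{\qf}^{\cM}(\bX)$ and $\tp_{\qf}^{\cM}(\bY)$, yields the conclusion.

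The proof of the key lemma is a two-moment computation, conditioning on $(\bA^{(n)},\bB^{(n)})$ and averaging only over $U^{(n)}$. For the first moment, one expands an alternating product
\[
\tr_n\bigl(U^{(n)} P_1(\bA^{(n)}) U^{(n)*} Q_1(\bB^{(n)}) \cdots U^{(n)} P_k(\bA^{(n)}) U^{(n)*} Q_k(\bB^{(n)})\bigr)
\]
into a sum over matrix indices and applies the Weingarten formula
\[
\mathbb{E}\bigl[U_{i_1 j_1}\cdots U_{i_k j_k}\overline{U}_{i'_1 j'_1}\cdots \overline{U}_{i'_k j'_k}\bigr] = \sum_{\sigma,\tau \in S_k} \mathrm{Wg}(n,\sigma\tau^{-1})\,\delta_\sigma(i,i')\,\delta_\tau(j,j').
\]
The asymptotic expansion $\mathrm{Wg}(n,\pi) = n^{-k-|\pi|}(\mathrm{Moeb}(\pi) + O(n^{-2}))$, together with the bound $|\{(i,j) : \delta_\sigma(i,i')\delta_\tau(j,j') \neq 0\}| = n^{k+\mathrm{cyc}(\sigma\tau^{-1})}$, shows that only those pairs $(\sigma,\tau)$ with $\sigma=\tau$ and with $\sigma$ a non-crossing permutation contribute to leading order; the resulting sum matches the mixed free cumulant formula and produces exactly $\tau_{\cN}\bigl(uA_1u^*B_1\cdots uA_k u^*B_k\bigr)$ where $u$ is Haar unitary free from $(\bA,\bB)$. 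For the second moment, an analogous Weingarten expansion for a product of two traces shows that the cross terms have an extra factor of $n^{-2}$ compared to the square of the mean, so the variance tends to $0$ and convergence in probability follows.

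The main obstacle, and where all the analytic content sits, is the combinatorial / asymptotic control of the Weingarten function: showing that (i) the leading $n$-order for each $(\sigma,\tau)$ is as claimed, (ii) the non-crossing contributions reassemble into the free moment--cumulant relation, and (iii) the quadratic Weingarten sum produces genuine decay for the variance uniformly in the choice of alternating word. Once this lemma is in hand, the rest of the argument is the bookkeeping above plus a routine limit interchange justified by the operator-norm bounds.
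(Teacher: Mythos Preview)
The paper does not actually prove this theorem: it is stated with attribution to Voiculescu \cite{Voiculescu1991,Voiculescu1998} and used as background for the discussion in \S\ref{sec:independence}, with no proof given. So there is nothing in the paper to compare your argument against.

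That said, your outline is a correct and standard route to the result, essentially the Collins--Weingarten proof of asymptotic freeness rather than Voiculescu's original arguments. The reduction step (using unitary invariance of one family to insert an independent Haar unitary) is right, and the first- and second-moment scheme via the Weingarten formula is the canonical modern approach. Two small points: (i) in the key lemma you ask for convergence of the \emph{joint} quantifier-free type of $(\bA^{(n)},\bB^{(n)})$, but for the application you only have, and only need, convergence of the two marginal types---after Haar conjugation the limiting mixed moments depend only on the separate moments of $\bA$ and $\bB$; (ii) the combinatorial bookkeeping you sketch (``only $\sigma=\tau$ non-crossing survive'') is a bit compressed---the leading-order contributions are governed by a geodesic condition in $S_k$ relating $\sigma$, $\tau$, and the trace's cycle, and the identification with free moments goes through the moment--cumulant relation on $NC(k)$. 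These are refinements, not gaps; the strategy is sound.
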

	
	\begin{question} \label{q:asymptoticfreeness}
		Is there some analog of Theorem \ref{thm:asymptoticfreeness} for full types rather than only quantifier-free types?
	\end{question}
	
	There are several issues in addressing this broad question.  The first is whether we want to use a limit as $n \to \infty$ or an ultralimit as $n \to \cU$.  In order for $\tp^{M_n(\C)}(\bX^{(n)})$ to converge as $n \to \infty$ to $\tp^{\cM}(\bX)$, it would be necessary for $\Th(M_n(\C)) \to \Th(\cM)$ as $n \to \infty$.  Indeed, every sentence is a formula with no free variables which can also be viewed as a formula in variables $x_1$, \dots, $x_d$.  Thus, the type of a $d$-tuple includes the information of the theory of the tracial $\mathrm{W}^*$-algebra that they came from.  This question was asked by Popa and also by Farah, Hart, and Sherman in \cite{FHS2014}.
	
	\begin{question} \label{q:matrixtheoryconvergence}
		Does $\lim_{n \to \infty} \Th(M_n(\C))$ exist, or in other words, for each formula $\phi$ with no free variables, does $\lim_{n \to \infty} \phi^{M_n(\C)}$ exist?  Equivalently, are $\prod_{n \to \cU} M_n(\C)$ and $\prod_{n \to \cV} M_n(\C)$ elementarily equivalent for all free ultrafilters $\cU$ and $\cV$ on $\N$?
	\end{question}
	
	This question is still wide open.  Many of the tools used in asymptotic random matrix theory to prove results like Theorem \ref{thm:asymptoticfreeness} work for every large value of $n$.  As such, they are not suited to distinguish between $\Th(M_n(\C))$ and $\Th(M_m(\C))$ for large values of $n$ and $m$.  However, perhaps group representations or quantum games could be used if one wants to show that matrix ultraproducts are not elementarily equivalent.
	
	To circumvent this question for the moment, let us consider limits with respect to a fixed ultrafilter $\cU$.  The second issue in Question \ref{q:asymptoticfreeness} is that hypothesis (4) of unitary invariance is not strong enough to make an analog of Theorem \ref{thm:asymptoticfreeness} for full types.  We illustrate this with Example \ref{ex:unitaryinvarianceinsufficient} below.  We will need the following lemma, which is an adaptation of the result that if all embeddings of $\cA$ into $\cQ$ are unitarily conjugate, then the $1$-bounded entropy $h(\cA)$ is less than or equal to zero, which implies that $\chi^{\cU}(\bX:\cA) = -\infty$ for every tuple $\bX$ from $\cA$ (see \cite{Hayes2018}); the argument is similar to \cite{VoiculescuFE3} and \cite{Hayes2018}.
	
	\begin{lemma}
		Let $\mu \in \mathbb{S}_d(\rT_{\tr})$ be a type realized in $\cQ$.  Let $\cQ = \prod_{n \to \cU} M_n(\C)$.  Suppose that for every $\bX$ and $\bY \in Q^d$ realizing $\mu$, there exists a unitary $U \in Q$ such that $U \bX U^* = \bY$, that is, $UX_jU^* = Y_j$ for each $j$.  Then $\chi^{\cU}(\mu) = -\infty$.
	\end{lemma}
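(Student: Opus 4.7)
The plan is to show that the unitary orbit of any single matrix tuple is too small (in $L^2$) to carry positive normalized exponential volume, and to use the hypothesis to force the microstate space of any sufficiently small neighborhood of $\mu$ into one such orbit.

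First I would prove a concentration step: for every $\epsilon>0$ there exist a neighborhood $\cO_\epsilon$ of $\mu$ in $\mathbb{S}_{d,r}(\rT_{\tr})$ and a set $E_\epsilon\in\cU$ such that, for all $n\in E_\epsilon$ and all $\bX,\bY\in\Gamma_r^{(n)}(\cO_\epsilon)$, there is $U\in U(n)$ with $\lVert U\bX U^*-\bY\rVert_2<\epsilon$. If this failed for some $\epsilon$, I would take a descending countable neighborhood basis $\cO_k\downarrow\{\mu\}$ with $\overline{\cO_{k+1}}\subseteq\cO_k$ (using metrizability and normality), and on each $E_k:=\{n:\text{bad pair exists in }\Gamma_r^{(n)}(\cO_k)\}\in\cU$ choose witnesses $(\bX_k^{(n)},\bY_k^{(n)})$. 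A diagonal selection setting $(\bX^{(n)},\bY^{(n)}):=(\bX_{k(n)}^{(n)},\bY_{k(n)}^{(n)})$ with $k(n)\to_{\cU}\infty$ yields elements $\bX,\bY\in D_r^{\cQ,d}$ whose full types equal $\mu$ (the preservation of the \emph{full} type in the ultralimit, not merely the quantifier-free type, uses the continuity of $\tp^{M_n(\C)}(\cdot)$ with respect to our topology on $\mathbb{S}_{d,r}(\rT_{\tr})$ and the nesting of closures), yet $\lVert U\bX U^*-\bY\rVert_2\geq\epsilon$ for every $U\in U(\cQ)$, contradicting the hypothesis.

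Next, fix any $\bX_0^{(n)}\in\Gamma_r^{(n)}(\cO_\epsilon)$ (non-empty on a set in $\cU$, for otherwise $\chi_{\full,r}^{\cU}(\mu)=-\infty$ immediately). By Step 1, $\Gamma_r^{(n)}(\cO_\epsilon)$ lies in the $\epsilon$-$L^2$-neighborhood of the unitary orbit $\{U\bX_0^{(n)}U^*:U\in U(n)\}$. Using the standard bound that $U(n)$ admits an $\eta$-cover in operator norm of size at most $(C/\eta)^{n^2}$ and that $U\mapsto U\bX_0^{(n)}U^*$ is $2r\sqrt{d}$-Lipschitz into $(M_n(\C)^d,\lVert\cdot\rVert_2)$, the orbit admits an $\epsilon$-$L^2$-cover of size at most $(2r\sqrt{d}C/\epsilon)^{n^2}$, so $\Gamma_r^{(n)}(\cO_\epsilon)$ is covered by that many $L^2$-balls of radius $2\epsilon$. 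This gives
\[
\vol \Gamma_r^{(n)}(\cO_\epsilon)\ \leq\ \left(\frac{2r\sqrt{d}\,C}{\epsilon}\right)^{\!n^2}\cdot\frac{\pi^{dn^2}}{(dn^2)!}\,(2\epsilon)^{2dn^2}.
\]

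Finally, applying Stirling to $(dn^2)!$ and adding the normalization $2d\log n$, the upper bound on $\frac{1}{n^2}\log\vol\Gamma_r^{(n)}(\cO_\epsilon)+2d\log n$ collapses to $(2d-1)\log\epsilon+C_{d,r}$, which tends to $-\infty$ as $\epsilon\to 0$ since $d\geq 1$. Hence $\chi_{\full,r}^{\cU}(\mu)=-\infty$ for every $r$, so $\chi^{\cU}(\mu)=-\infty$. The main obstacle is Step 1, namely the diagonalization that extracts a pair of \emph{full}-type realizations of $\mu$ with a uniform positive unitary defect in $\cQ$; the rest is a volume computation entirely analogous to Voiculescu's original orbit-covering estimates for $1$-bounded entropy.
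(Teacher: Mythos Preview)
Your proposal is correct and follows the same overall architecture as the paper: reduce to showing that, for every $\epsilon>0$, some microstate neighborhood of $\mu$ is contained in the $\epsilon$-thickening of a single unitary orbit, then cover $U(n)$ by $(C/\epsilon)^{n^2}$ balls and run the volume estimate with Stirling. The second half of your argument is essentially identical to the paper's.

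The genuine difference is in how you obtain the concentration step. The paper encodes ``distance to the unitary orbit'' as a definable predicate
\[
\psi^{\cM}(\bX,\bY)=\inf_{Z\in D_1}\Bigl(\sum_j d^{\cM}(e^{\pi i(Z+Z^*)}X_j e^{-\pi i(Z+Z^*)},Y_j)^2\Bigr)^{1/2},
\]
observes that the hypothesis forces $\psi$ to vanish on the closed set $\cK$ of joint types of pairs realizing $\mu$, and then uses compactness of $\mathbb{S}_{2d,r}(\rT_{\tr})$ to find a single product neighborhood $\cO_k\times\cO_k$ on which $\psi<\epsilon$ for \emph{all} $n$. Your route instead assumes the conclusion fails, diagonalizes over a neighborhood basis to produce witnesses $\bX^{(n)},\bY^{(n)}$, and invokes \L o\'s's theorem to get $\bX,\bY\in\cQ$ with $\tp^{\cQ}(\bX)=\tp^{\cQ}(\bY)=\mu$ but $\inf_U\lVert U\bX U^*-\bY\rVert_2\geq\epsilon$ (using that unitaries in $\cQ$ lift to unitaries in $M_n(\C)$), contradicting the hypothesis. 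The two arguments are dual: compactness of the type space versus saturation of the ultraproduct. The paper's version is slightly cleaner in that it yields the conclusion for all $n$ rather than a $\cU$-large set, and it stays within the model-theoretic idiom the paper is promoting; your version is more elementary in that it avoids constructing $\psi$ explicitly, at the cost of the diagonal bookkeeping you flag as the main obstacle.
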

	
	\begin{proof}
		Fix a large $r > 0$.  Let
		\[
		\cK = \{\tp^{\cQ}(\bX,\bY): \bX, \bY \in (D_r^{\cQ})^d, \tp^{\cQ}(\bX) = \tp^{\cQ}(\bY) = \mu \}.
		\]
		Since $\cQ$ is countably saturated, $\cK$ is closed.  Consider the definable predicate
		\begin{align*}
			\psi^{\cM}(\bX,\bY) &= \inf_{Z \in D_1} \left( \sum_{j=1}^d d^{\cM}(e^{\pi i (Z + Z^*)}X_j e^{-\pi i(Z + Z^*)}, Y_j)^2 \right)^{1/2} \\
			&= \inf_{\text{unitaries } U \in \cM} \norm{U \bX U^* - \bY}_2.
		\end{align*}
		One can check that $\psi^{\cM}$ is a definable predicate by expressing $e^{\pi i(Z + Z^*)}$ as a power series and hence approximating it by non-commutative $*$-polynomials uniformly on each operator norm ball.  Moreover, since every unitary $U$ in a tracial $\mathrm{W}^*$-algebra can be expressed as $e^{2\pi i Z}$ for some self-adjoint $Z$, we see that $e^{\pi i(Z + Z^*)}$ for $Z \in D_1$ will produce all the unitaries in $\cM$.
		
		Now by our assumption, $\psi^{\cQ}(\bX,\bY) = 0$ whenever $\tp^{\cQ}(\bX,\bY) \in \cK$.  Let $\cO_k$ be a sequence of neighborhoods of $\mu$ in $\mathbb{S}_{d,r}(\rT_{\tr})$ such that $\overline{\cO}_{k+1} \subseteq \cO_k$ and $\bigcap_{k \in \N} \cO_k = \{\mu\}$.  Then let
		\[
		\cO_k' = \{\tp^{\cM}(\bX,\bY): \cM \models \rT_{\tr}, \tp^{\cM}(\bX) \in \cO_k, \tp^{\cM}(\bY) \in \cO_k \}.
		\]
		Then $\overline{\cO_{k+1}'} \subseteq \cO_k'$ and $\bigcap_{k \in \N} \cO_k' = \cK$.  Fix $\epsilon > 0$.  Since $\cK \subseteq \{\gamma \in \mathbb{S}_{2d,r}: \gamma(\psi) = 0\}$, we see that $(\mathbb{S}_{2d,r}(\rT_{\tr}) \setminus \overline{\cO}_k')_{k \in \N}$ is an open cover of $\{\gamma \in \mathbb{S}_{2d,r}(\rT_{\tr}): \gamma(\psi) \geq \epsilon \}$.  Since the latter set is compact and the $\cO_k$'s are nested, there exists some $k$ such that
		\[
		\{\gamma \in \mathbb{S}_{2d,r}: \gamma(\psi) \geq \epsilon \} \subseteq \mathbb{S}_{2d,r}(\rT_{\tr}) \setminus \overline{\cO}_k',
		\]
		or in other words
		\[
		\cO_k' \subseteq \{\gamma \in \mathbb{S}_{2d,r}: \gamma(\psi) < \epsilon \}.
		\]
		This implies that for all $(\bX, \bY) \in \Gamma_r^{(n)}(\cO_k) \times \Gamma_r^{(n)}(\cO_k) = \Gamma_r^{(n)}(\cO_k')$, we have $\psi^{M_n(\C)}(\bX,\bY) < \epsilon$, or in other words, there exists a unitary $U \in M_n(\C)$ such that $\norm{U \bX U^* - \bY}_2 < \epsilon$.
		
		By \cite[Theorem 7]{Szarek1998}, the unitary group $U(n)$ can be covered by $(C / \epsilon)^{n^2}$ operator-norm balls of radius $\epsilon$, where $C$ is a universal constant (independent of $n$).   Let $(U_i)_{i=1}^m$ be unitaries such that the balls $B_{\norm{\cdot}}(U_i,\epsilon)$ cover the unitary group.  Now we claim that the balls $B_{\norm{\cdot}_2}(U_i \bX U_i^*, (2\sqrt{d} r + 1) \epsilon)$ cover $\Gamma_r^{(n)}(\cO_k)$.  To see this, let $\bY$ be in the microstate space $\Gamma_r^{(n)}(\cO_k)$.  Then there exists a unitary $U$ such that $\norm{U \bX U^* - \bY}_2 < \epsilon$.
		
		Next, there exists a unitary $U_j$ such that $\norm{U_j - U} < \epsilon$.  Then
		\begin{align*}
			\norm{U_i X_j U_i^* - U X_j U^*}_2 &\leq \norm{(U_i - U) X_j U_i^*}_2 + \norm{U X_j (U_i^* - U^*)}_2 \\
			&\leq \norm{U_i - U} \norm{X_j}_2 \norm{U_i^*} + \norm{U} \norm{X_j}_2 \norm{U_i^* - U^*} \\
			&\leq 2r \epsilon.
		\end{align*}
		Thus,
		\[
		\norm{U_i \bX U_i^* - U \bX U^*}_2 = \left( \sum_{j=1}^d \norm{U_i X_j U_i^* - U X_j U^*}_2^2 \right)^{1/2} \leq 2\sqrt{d} r \epsilon.
		\]
		So by the triangle inequality, $\norm{U_i \bX U_i^* - \bY}_2 < 2 \sqrt{d} r \epsilon + \epsilon$.  Hence, $\Gamma_r^{(n)}(\cO_k)$ is covered by the balls $B_{\norm{\cdot}_2}(U_i \bX U_i^*, (2\sqrt{d}r + 1)\epsilon)$.
		
		Now $M_n(\C)^d$ is a complex vector space of dimension $d n^2$, hence isometric to $\C^{dn^2}$.  A ball of radius $(2\sqrt{d}r + 1) \epsilon$ therefore has Lebesgue measure
		\[
		\frac{\pi^{n^2d}}{(n^2 d)!} [(2 \sqrt{d} r + 1) \epsilon]^{2n^2 d}.
		\]
		The number of balls needed to cover the microstate space is $(C / \epsilon)^{n^2}$, and hence
		\[
		\vol \Gamma_r^{(n)}(\cO_k) \leq \frac{C^{n^2}}{\epsilon^{n^2}} \frac{\pi^{dn^2}}{(n^2 d)!} (2\sqrt{d} r + 1)^{2dn^2} \epsilon^{2dn^2} = C^{n^2} \pi^{dn^2} (2\sqrt{d} r + 1)^{2dn^2} \epsilon^{(2d - 1)n^2}.
		\]
		Thus,
		\[
		\frac{1}{n^2} \log \vol \Gamma_r^{(n)}(\cO_k) - \frac{1}{n^2} \log (n^2 d)! \leq \log C + d \log \pi + 2d \log(2 \sqrt{d} r + 1) + (2d - 1) \log \epsilon.
		\]
		Now using Stirling's formula
		\[
		\log (n^2 d)! = n^2 d \log (n^2d) - n^2 d + O(\log(n^2d)),
		\]
		so that
		\[
		\frac{1}{n^2} \log (n^2 d)! = 2d \log n + d (\log d - 1) + O(\log(n^2d)/n^2),
		\]
		where the last error term goes to zero as $n \to \infty$.  Hence, replacing $(1/n^2) \log(n^2d)!$ by $2d \log n$ does not change the asymptotic behavior of the above equation.  After this replacement, sending $n \to \infty$ results in
		\[
		\chi_r^{\cU}(\mu) \leq \log C + d \log \pi - d(\log d - 1) + 2d \log(2 \sqrt{d} r + 1) + (2d - 1) \log \epsilon.
		\]
		Since $\epsilon$ was arbitrary, $\chi_r^{\cU}(\mu) = -\infty$, and since $r$ was arbitrary $\chi^{\cU}(\mu) = -\infty$.
	\end{proof}
	
	\begin{example} \label{ex:unitaryinvarianceinsufficient}
		This example shows that unitary invariance (4) is not a strong enough hypothesis to produce an analog of Theorem \ref{thm:asymptoticfreeness} for full types.  Indeed, choose some type $\mu$ with $\chi^{\cU}(\mu) > -\infty$.  It follows that there exist two tuples $\bX = [\bX^{(n)}]_{n \in \N}$ and $\bY = [\bY^{(n)}]_{n \in \N}$ in $\cQ$ with type $\mu$ that are not unitarily conjugate.  Letting $\psi$ be the formula in the previous lemma, we have
		\[
		\lim_{n \to \cU} \psi^{M_n(\C)}(\bX^{(n)},\bY^{(n)}) = \psi^{\cQ}(\bX,\bY) > 0.
		\]
		(If $\psi^{\cQ}(\bX,\bY)$ were equal to zero, then since $\cQ$ is countably saturated there would exist a unitary $U$ with $\norm{U \bX U^* - \bY}_2 = 0$.)  Now let $U^{(n)}$ and $V^{(n)}$ be independent random variables, each chosen according to the Haar probability measure on the $n \times n$ unitary group.  Consider the two pairs of matrix models:
		\begin{enumerate}[(A)]
			\item $U^{(n)} \bX^{(n)} (U^{(n)})^*$ and $V^{(n)} \bX^{(n)} (V^{(n)})^*$,
			\item $U^{(n)} \bX^{(n)} (U^{(n)})^*$ and $V^{(n)} \bY^{(n)} (V^{(n)})^*$.
		\end{enumerate}
		Then both pairs of random matrix models satisfy the hypotheses of Theorem \ref{thm:asymptoticfreeness}:
		\begin{enumerate}[(1)]
			\item The types of $U^{(n)} \bX^{(n)} (U^{(n)})^*$ and $V^{(n)} \bX^{(n)} (V^{(n)})^*$ and $V^{(n)} \bY^{(n)} (V^{(n)})^*$ in $M_n(\C)$ converge to $\mu$ almost surely since the type is invariant under unitary conjugation. 
			\item The operator norms of $\bX^{(n)}$ and $\bY^{(n)}$ are bounded.
			\item Since $\bX$ and $\bY$ are deterministic and $U^{(n)}$ and $V^{(n)}$ are independent, the tuples from (A) and (B) respectively are independent. 
			\item The probability distributions are unitarily invariant by construction becaue the Haar measure on the unitary group is left-invariant.
		\end{enumerate}
		However, we have
		\[
		\psi^{M_n(\C)}(U^{(n)} \bX^{(n)} (U^{(n)})^*, V^{(n)} \bX^{(n)} (V^{(n)})^*) = \psi^{M_n(\C)}(\bX^{(n)},\bX^{(n)}) \to 0
		\]
		and
		\[
		\psi^{M_n(\C)}(U^{(n)} \bX^{(n)} (U^{(n)})^*, V^{(n)} \bY^{(n)} (V^{(n)})^*) = \psi^{M_n(\C)}(\bX^{(n)},\bY^{(n)}) \to \psi^{\cQ}(\bX,\bY) > 0.
		\]
		Thus, $\tp^{M_n(\C)}(U^{(n)} \bX^{(n)} (U^{(n)})^*, V^{(n)} \bX^{(n)} (V^{(n)})^*)$ and $\tp^{M_n(\C)}(U^{(n)} \bX^{(n)} (U^{(n)})^*, V^{(n)} \bY^{(n)} (V^{(n)})^*)$ \emph{cannot} converge in probability to the same limit.
	\end{example}
	
	If we want some analog of Theorem \ref{thm:asymptoticfreeness} for full types, what condition could we make that is stronger than unitary invariance?  Intuitively, we want something like invariance under automorphisms of $\cQ$, since (under the continuum hypothesis) any two tuples with the same type are conjugate by an automorphism of $\cQ$.  This would translate into invariance under approximate automorphisms on the matrix level.  However, it is not clear to me how to formulate this notion precisely.
	
	Rather, we could think of an automorphism-invariant measure on the microstate space as one which gives equal weight to all the different microstates for the same type, or in other words, a uniform distribution on the microstate space.  This motivates the following definition, which analogous to a known property of the independent join for $*$-laws \cite[Lemma 3.5]{Voiculescu1998}.
	
	\begin{definition}[Independent join] \label{def:independentjoin}
		Let $\cQ = \prod_{n \to \cU} M_n(\C)$.  Let $\mu_1 \in \mathbb{S}_{d_1}(\Th(\cQ))$ and $\mu_2 \in \mathbb{S}_{d_2}(\Th(\cQ))$.  We say that a type $\mu \in \mathbb{S}_{d_1 + d_2}(\Th(\cQ))$ is an \emph{independent join} of $\mu_1$ and $\mu_2$ if for every sufficiently large $r$, we have
		\[
		\liminf_{\cO \searrow \mu} \liminf_{(\cO_1,\cO_2) \searrow (\mu_1,\mu_2)}
		\lim_{n \to \cU} \frac{\vol(\Gamma_r^{(n)}(\cO) \cap [\Gamma_r^{(n)}(\cO_1) \times \Gamma_r^{(n)}(\cO_2)]) }{\vol(\Gamma_r^{(n)}(\cO_1) \times \Gamma_r^{(n)}(\cO_2))} = 1,
		\]
		where the $\liminf$ as $\cO \searrow \gamma$ denotes the $\liminf$ over the directed system of neighborhoods of $\mu$ in $\mathbb{S}_{d_1+d_2,r}(\rT_{\tr})$ ordered by reverse inclusion, and similarly $(\cO_1,\cO_2) \searrow (\mu_1,\mu_2)$ denotes the directed system of pairs of neighborhoods of $\mu_1$ and $\mu_2$ in $\mathbb{S}_{d_1,r}(\rT_{\tr})$ and $\mathbb{S}_{d_2,r}(\rT_{\tr})$ respectively ordered by reverse inclusion of both elements of the pair.
	\end{definition}
	
	Intuitively, this definition says that for each neighborhood $\cO$ of $\gamma$, if the neighborhoods $\cO_1$ and $\cO_2$ are sufficiently small, then a tuple $\bX$ chosen uniformly at random from $\Gamma_r^{(n)}(\cO_1) \times \Gamma_r^{(n)}(\cO_2)$ has a high probability of being in $\Gamma_r^{(n)}(\cO)$.  We remark that the same definition makes sense for a triple $(\mu_1,\mu_2,\mu_3)$ or in general any finite tuple of types.
	
	The main issue with Definition \ref{def:independentjoin} is existence:
	\begin{question}
		Given $\mu_1 \in \mathbb{S}_{d_1,r}(\Th(\cQ))$ and $\mu_2 \in \mathbb{S}_{d_2,r}$, does an independent join exist?
	\end{question}
	Resolving this question will likely require a more detailed understanding of the behavior of formulas in independent random matrix tuples, just as Theorem \ref{thm:asymptoticfreeness} required analyzing the behavior of terms and quantifier-free formulas.  This endeavor may also be entangled with Question \ref{q:matrixtheoryconvergence}.
	
	However, Definition \ref{def:independentjoin} at least has several desirable properties.
	
	\begin{observation}[Uniqueness]
		Given $\mu_1 \in \mathbb{S}_{d_1,r}(\Th(\cQ))$ and $\mu_2 \in \mathbb{S}_{d_2,r}$, there is at most one independent join.
	\end{observation}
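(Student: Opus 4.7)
The plan is to argue by contradiction: I will suppose that $\mu$ and $\mu'$ are two distinct independent joins of $\mu_1$ and $\mu_2$, and then exhibit a common pair of small neighborhoods $\tilde\cO_1, \tilde\cO_2$ of $\mu_1, \mu_2$ at which the defining $\liminf = 1$ identity forces two disjoint volume ratios each to exceed $1 - \epsilon$, contradicting that the two ratios sum to at most $1$.

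First, I would separate $\mu$ and $\mu'$ inside $\mathbb{S}_{d_1+d_2,r}(\rT_{\tr})$: that space carries the weak-$*$ (Hausdorff) logic topology, so I can pick an $\cL_{\tr}$-formula $\phi$ and scalars $c_1 < c_2$ with $\mu(\phi) < c_1 < c_2 < \mu'(\phi)$, yielding disjoint basic open neighborhoods $\cO := \{\nu : \nu(\phi) < c_1\}$ of $\mu$ and $\cO' := \{\nu : \nu(\phi) > c_2\}$ of $\mu'$. I would fix $r$ large enough that the defining property of Definition \ref{def:independentjoin} applies to both joins and so that $\mu_1, \mu_2, \mu, \mu'$ all live in the corresponding operator-norm-bounded type spaces.

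Next, I would unpack the nested $\liminf$'s. Write $R(\cO,\cO_1,\cO_2,n)$ for the volume ratio appearing in Definition \ref{def:independentjoin}, and set $L(\cO) := \liminf_{(\cO_1,\cO_2)\searrow(\mu_1,\mu_2)} \lim_{n \to \cU} R(\cO,\cO_1,\cO_2,n)$. Monotonicity of microstate spaces shows that $L$ is non-increasing in $\cO$ and bounded above by $1$; since the outer $\liminf$ over a reverse-inclusion directed system of a non-increasing function equals its pointwise infimum, the hypothesis $\liminf_{\cO \searrow \mu} L(\cO) = 1$ forces $L(\cO) = 1$ for \emph{every} neighborhood $\cO$ of $\mu$. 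Unpacking the inner $\liminf$ via its $\sup\inf$ description, for each $\epsilon > 0$ there exist neighborhoods $\cO_1^0$ of $\mu_1$ and $\cO_2^0$ of $\mu_2$ such that $\lim_{n \to \cU} R(\cO,\cO_1,\cO_2,n) \geq 1 - \epsilon$ for every refined pair $(\cO_1,\cO_2) \subseteq (\cO_1^0,\cO_2^0)$. I would run the same argument for $\mu'$ and its neighborhood $\cO'$ to obtain a second such pair $(\cO_1^{0\prime}, \cO_2^{0\prime})$.

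Finally, I would pick $\tilde\cO_1 \subseteq \cO_1^0 \cap \cO_1^{0\prime}$ and $\tilde\cO_2 \subseteq \cO_2^0 \cap \cO_2^{0\prime}$ (both are neighborhoods of $\mu_1, \mu_2$ and hence nonempty), so that both lower bounds apply at $(\tilde\cO_1,\tilde\cO_2)$. Because $\cO \cap \cO' = \varnothing$, the sets $\Gamma_r^{(n)}(\cO) \cap [\Gamma_r^{(n)}(\tilde\cO_1) \times \Gamma_r^{(n)}(\tilde\cO_2)]$ and $\Gamma_r^{(n)}(\cO') \cap [\Gamma_r^{(n)}(\tilde\cO_1) \times \Gamma_r^{(n)}(\tilde\cO_2)]$ are disjoint subsets of the common denominator set, forcing $R(\cO,\tilde\cO_1,\tilde\cO_2,n) + R(\cO',\tilde\cO_1,\tilde\cO_2,n) \leq 1$. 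Passing to $\lim_{n \to \cU}$ and applying both lower bounds yields $2(1-\epsilon) \leq 1$, which fails as soon as $\epsilon < 1/2$. The only real obstacle is notational: one must be careful about the two nested directed-system $\liminf$'s and verify that the outer one collapses to a pointwise identity by monotonicity. Once that reduction is in hand, the argument uses no random-matrix or model-theoretic input beyond Hausdorffness of the type space and disjointness of microstate preimages.
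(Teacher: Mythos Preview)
Your proposal is correct and follows essentially the same approach as the paper: separate $\mu$ and $\mu'$ by disjoint neighborhoods using Hausdorffness of $\mathbb{S}_{d_1+d_2,r}(\rT_{\tr})$, then observe that the two volume ratios sum to at most $1$, contradicting that both limits equal $1$. Your version is more careful than the paper's in explicitly unpacking the nested $\liminf$'s via the monotonicity observation $L(\cO) \leq L(\cO'')$ for $\cO \subseteq \cO''$ (so that $\liminf_{\cO\searrow\mu} L(\cO) = \inf_\cO L(\cO) = 1$ forces $L(\cO)=1$ for every neighborhood), whereas the paper simply asserts that the displayed inequality ``implies that the limits in Definition~\ref{def:independentjoin} cannot be $1$ for both $\mu$ and $\mu'$.''
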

	
	\begin{proof}
		Suppose that $\mu$ and $\mu'$ are both independent joins of $\mu_1$ and $\mu_2$.  Fix a large $r > 0$.  Since $\mathbb{S}_{d_1+d_2,r}(\rT_{\tr})$ is Hausdorff, any sufficiently small neighborhoods of $\cO$ of $\mu$ and $\cO'$ of $\mu'$ will be disjoint.  Therefore, for all neighborhoods $\cO_1$ and $\cO_2$ of $\mu_1$ and $\mu_2$, we will have
		\[
		\frac{\vol(\Gamma_r^{(n)}(\cO) \cap [\Gamma_r^{(n)}(\cO_1) \times \Gamma_r^{(n)}(\cO_2)]) }{\vol(\Gamma_r^{(n)}(\cO_1) \times \Gamma_r^{(n)}(\cO_2))} + \frac{\vol(\Gamma_r^{(n)}(\cO') \cap [\Gamma_r^{(n)}(\cO_1) \times \Gamma_r^{(n)}(\cO_2)]) }{\vol(\Gamma_r^{(n)}(\cO_1) \times \Gamma_r^{(n)}(\cO_2))} \leq 1.
		\]
		This implies that the limits in Definition \ref{def:independentjoin} cannot be $1$ for both $\mu$ and $\mu'$.
	\end{proof}
	
	\begin{observation}[Symmetry]
		Suppose that $\tp^{\cQ}(\bX,\bY)$ is the independent join of $\tp^{\cQ}(\bX)$ and $\tp^{\cQ}(\bY)$.  Then $\tp^{\cQ}(\bY,\bX)$ is the independent join of $\tp^{\cQ}(\bY)$ and $\tp^{\cQ}(\bX)$.
	\end{observation}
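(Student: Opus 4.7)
The plan is to exploit the symmetry of Definition \ref{def:independentjoin} under the coordinate-swap map
\[
\sigma:(\bZ_1,\dots,\bZ_{d_1},\bW_1,\dots,\bW_{d_2}) \longmapsto (\bW_1,\dots,\bW_{d_2},\bZ_1,\dots,\bZ_{d_1}),
\]
which acts coherently at three levels. First, relabeling free variables in $\cL_{\tr}$-formulas induces a homeomorphism $\sigma^*:\mathbb{S}_{d_1+d_2,r}(\rT_{\tr})\to \mathbb{S}_{d_2+d_1,r}(\rT_{\tr})$ satisfying $\sigma^*\tp^{\cQ}(\bX,\bY)=\tp^{\cQ}(\bY,\bX)$. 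Second, $\sigma$ is itself a bijection on tuples in any tracial $\mathrm{W}^*$-algebra. Third, on matrices $\sigma:M_n(\C)^{d_1+d_2}\to M_n(\C)^{d_2+d_1}$ is a permutation of coordinates, hence a complex-linear isometry, and in particular preserves Lebesgue measure.

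Next I would verify the compatibility of these three actions with the objects appearing in Definition \ref{def:independentjoin}. Setting $\mu=\tp^{\cQ}(\bX,\bY)$, $\mu_1=\tp^{\cQ}(\bX)$, $\mu_2=\tp^{\cQ}(\bY)$, and $\tilde\mu=\sigma^*\mu=\tp^{\cQ}(\bY,\bX)$, the homeomorphism $\sigma^*$ gives a bijection between the directed system of neighborhoods of $\mu$ in $\mathbb{S}_{d_1+d_2,r}(\rT_{\tr})$ (ordered by reverse inclusion) and that of $\tilde\mu$ in $\mathbb{S}_{d_2+d_1,r}(\rT_{\tr})$; likewise for pairs of neighborhoods of $(\mu_1,\mu_2)$ versus $(\mu_2,\mu_1)$. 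At the matrix level, $\sigma$ maps $\Gamma_r^{(n)}(\cO)$ bijectively onto $\Gamma_r^{(n)}(\sigma^*\cO)$ and maps $\Gamma_r^{(n)}(\cO_1)\times\Gamma_r^{(n)}(\cO_2)$ bijectively onto $\Gamma_r^{(n)}(\cO_2)\times\Gamma_r^{(n)}(\cO_1)$, in each case preserving Lebesgue measure.

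Consequently, the volume ratio
\[
\frac{\vol\bigl(\Gamma_r^{(n)}(\cO)\cap[\Gamma_r^{(n)}(\cO_1)\times\Gamma_r^{(n)}(\cO_2)]\bigr)}{\vol\bigl(\Gamma_r^{(n)}(\cO_1)\times\Gamma_r^{(n)}(\cO_2)\bigr)}
\]
appearing in the definition for the triple $(\mu_1,\mu_2,\mu)$ is identically equal (for every choice of $\cO,\cO_1,\cO_2$ and every $n$) to the corresponding ratio for $(\mu_2,\mu_1,\tilde\mu)$ with the swapped neighborhoods $\sigma^*\cO,\cO_2,\cO_1$. Taking $\lim_{n\to\cU}$ and then the iterated $\liminf$'s, which are transported bijectively by $\sigma^*$, yields that the limit equals $1$ for $(\mu_1,\mu_2,\mu)$ if and only if it equals $1$ for $(\mu_2,\mu_1,\tilde\mu)$. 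Since $r$ ranges over the same interval in both cases, $\tp^{\cQ}(\bY,\bX)$ is indeed the independent join of $\tp^{\cQ}(\bY)$ and $\tp^{\cQ}(\bX)$.

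There is no substantive obstacle here; the statement is essentially a formal consequence of the invariance of every ingredient (formulas, types, microstate spaces, Lebesgue measure, directed systems of neighborhoods) under variable-relabeling. The only care required is to check this invariance cleanly at each of the three levels above before chaining them through the nested limits.
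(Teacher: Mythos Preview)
Your proposal is correct and is exactly the argument the paper has in mind: the paper merely states that the observation ``follows from the symmetry of the definition and the symmetry of the notion of neighborhoods in the logic topology'' and leaves the details to the reader, and your coordinate-swap verification supplies precisely those details.
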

	This follows from the symmetry of the definition and the symmetry of the notion of neighborhoods in the logic topology.  We leave the details to the reader.
	
	\begin{observation}[Amalgamation property]
		Let $\mu_1 \in \mathbb{S}_{d_1}(\Th(\cQ))$, $\mu_2 \in \mathbb{S}_{d_2}(\Th(\cQ))$, and $\mu_3 \in \mathbb{S}_{d_3}(\Th(\cQ))$.  Let $\mu$ be an independent join of $\mu_1$ and $\mu_2$, and let $\mu'$ be an independent join of $\mu$ and $\mu_3$.  Then $\mu'$ is an independent join of $\mu_1, \mu_2, \mu_3$.
	\end{observation}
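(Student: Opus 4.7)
The plan is to verify the natural triple version of Definition \ref{def:independentjoin}, namely that for sufficiently large $r$,
\begin{multline*}
\liminf_{\cO' \searrow \mu'}\;\liminf_{(\cO_1,\cO_2,\cO_3) \searrow (\mu_1,\mu_2,\mu_3)} \\
\lim_{n \to \cU} \frac{\vol(\Gamma_r^{(n)}(\cO') \cap [\Gamma_r^{(n)}(\cO_1) \times \Gamma_r^{(n)}(\cO_2) \times \Gamma_r^{(n)}(\cO_3)])}{\vol(\Gamma_r^{(n)}(\cO_1) \times \Gamma_r^{(n)}(\cO_2) \times \Gamma_r^{(n)}(\cO_3))} = 1.
\end{multline*}
Fix $\epsilon > 0$ and a neighborhood $\cO'$ of $\mu'$ small enough for the hypothesis on $\mu'$ to activate. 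The aim is to locate small neighborhoods $\cO_1,\cO_2,\cO_3$ so that the above ratio exceeds $1-\epsilon$ in the ultralimit.

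The key is a set-theoretic decomposition reducing the triple-factor ratio to two binary-factor ratios. Given an intermediate neighborhood $\tilde\cO$ of $\mu$ in $\mathbb{S}_{d_1+d_2,r}(\rT_{\tr})$ whose projection lies in $\cO_1\times\cO_2$, abbreviate $A = \Gamma_r^{(n)}(\cO_1)$, $B = \Gamma_r^{(n)}(\cO_2)$, $C = \Gamma_r^{(n)}(\cO_3)$, $D = \Gamma_r^{(n)}(\tilde\cO)$, $W = \Gamma_r^{(n)}(\cO')$. The projection condition gives $D \subseteq A\times B$, and the elementary inclusion
\[
(A \times B \times C) \setminus W \,\subseteq\, ((A \times B) \setminus D) \times C \,\cup\, (D\times C) \setminus W
\]
yields, after dividing by $\vol(A\times B)\vol(C)$ and using $\vol(D)\leq\vol(A\times B)$,
\[
\frac{\vol((A\times B\times C)\setminus W)}{\vol(A\times B\times C)} \;\leq\; \frac{\vol((A\times B)\setminus D)}{\vol(A\times B)} + \frac{\vol((D\times C)\setminus W)}{\vol(D\times C)}.
\]
The first summand is the complement-ratio governed by the independent join of $\mu_1,\mu_2$ (with target $\tilde\cO$); the second is governed by the independent join of $\mu,\mu_3$ (with targets $\tilde\cO,\cO_3$ and $\cO'$).

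The steps in order are: first, apply the hypothesis on $\mu'$ to pick a neighborhood $\tilde\cO_{**}$ of $\mu$ and $\cO_{3*}$ of $\mu_3$ such that for every $\tilde\cO\subseteq\tilde\cO_{**}$ and $\cO_3\subseteq\cO_{3*}$ the second complement-ratio has ultralimit at most $\epsilon/2$; next, apply the hypothesis on $\mu$ with $\tilde\cO_{**}$ as outer target to pick $\cO_{1*},\cO_{2*}$ such that for $\cO_i\subseteq\cO_{i*}$ the ratio $\vol((A\times B)\setminus\Gamma_r^{(n)}(\tilde\cO_{**}))/\vol(A\times B)$ has ultralimit at most $\epsilon/2$; finally, for any such $\cO_1,\cO_2,\cO_3$ take $\tilde\cO := \tilde\cO_{**}\cap\pi^{-1}(\cO_1\times\cO_2)$ where $\pi$ is the canonical restriction of type spaces. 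This $\tilde\cO$ is a neighborhood of $\mu$ inside $\tilde\cO_{**}$, and the identity $\Gamma_r^{(n)}(\tilde\cO) = \Gamma_r^{(n)}(\tilde\cO_{**})\cap(A\times B)$ gives $(A\times B)\setminus D = (A\times B)\setminus\Gamma_r^{(n)}(\tilde\cO_{**})$, so the first complement-ratio coincides with the one controlled in the second step, while the second complement-ratio is controlled by the first step because $\tilde\cO\subseteq\tilde\cO_{**}$. Summing the two bounds gives the desired $\epsilon$-bound, and letting $\epsilon\to 0$ completes the argument.

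The main obstacle I expect is the bookkeeping of the two nested $\liminf$ structures: the outer hypothesis must be applied first so that $\tilde\cO_{**}$ is fixed before $\cO_1,\cO_2$ are shrunk, yet the auxiliary $\tilde\cO$ has to be further refined below $\tilde\cO_{**}$ depending on $\cO_1,\cO_2$ in order to enforce the containment $D\subseteq A\times B$ on which the decomposition depends. The observation that the refinement $\tilde\cO_{**}\cap\pi^{-1}(\cO_1\times\cO_2)$ leaves the first complement-ratio literally unchanged, rather than merely bounded, is what permits the two estimates to be composed without loss. A minor point is that microstate volumes in the denominators must be positive along $\cU$, which follows from each $\mu_i$ being realized in $\cQ$ via Lemma \ref{lem:ultraproductrealization}.
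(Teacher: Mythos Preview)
Your proof is correct and follows essentially the same route as the paper's: introduce an intermediate neighborhood $\tilde\cO$ of $\mu$ with $\Gamma_r^{(n)}(\tilde\cO)\subseteq\Gamma_r^{(n)}(\cO_1)\times\Gamma_r^{(n)}(\cO_2)$, and split the triple ratio through it into two binary ratios controlled respectively by the two independent-join hypotheses. The only difference is cosmetic---you bound the \emph{complement} ratio additively whereas the paper bounds the direct ratio multiplicatively, and you track the nested limits by explicit $\epsilon$-thresholds rather than the paper's subnet language; your bookkeeping of which neighborhood is chosen first is if anything a bit more transparent.
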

	
	\begin{proof}
		Per the three-type analog of Definition \ref{def:independentjoin}, we need to show that
		\[
		\liminf_{\cO' \searrow \mu'} \liminf_{(\cO_1,\cO_2,\cO_3) \searrow (\mu_1,\mu_2,\mu_3)}
		\lim_{n \to \cU} \frac{\vol(\Gamma_r^{(n)}(\cO') \cap [\Gamma_r^{(n)}(\cO_1) \times \Gamma_r^{(n)}(\cO_2) \times \Gamma_r^{(n)}(\cO_3)]) }{\vol(\Gamma_r^{(n)}(\cO_1) \times \Gamma_r^{(n)}(\cO_2) \times \Gamma_r^{(n)}(\cO_3))} = 1.
		\]
		The $\liminf$ on the left will not change if we insert an auxiliary variable $\cO$ representing a neighborhood of $\mu$, so we can write our goal as
		\[
		\liminf_{\cO' \searrow \mu'} \liminf_{\cO \searrow \mu} \liminf_{(\cO_1,\cO_2,\cO_3) \searrow (\mu_1,\mu_2,\mu_3)}
		\lim_{n \to \cU} \frac{\vol(\Gamma_r^{(n)}(\cO') \cap [\Gamma_r^{(n)}(\cO_1) \times \Gamma_r^{(n)}(\cO_2) \times \Gamma_r^{(n)}(\cO_3)]) }{\vol(\Gamma_r^{(n)}(\cO_1) \times \Gamma_r^{(n)}(\cO_2) \times \Gamma_r^{(n)}(\cO_3))} = 1.
		\]
		Let
		\[
		\tilde{\cO} := \{\tp^{\cM}(\bX,\bY): \cM \models \rT_{\tr}, \tp^{\cM}(\bX,\bY) \in \cO, \tp^{\cM}(\bX) \in \cO_1, \tp^{\cM}(\bY) \in \cO_2 \}.
		\]
		Since $\Gamma_r^{(n)}(\tilde{\cO}) \subseteq \Gamma_r^{(n)}(\cO_1) \times \Gamma_r^{(n)}(\cO_2)$, the quantity that we want to estimate can be bounded from below by
		\begin{multline} \label{eq:volumeproduct}
			\frac{\vol(\Gamma_r^{(n)}(\cO') \cap [\Gamma_r^{(n)}(\tilde{\cO}) \times \Gamma_r^{(n)}(\cO_3)]) }{\vol(\Gamma_r^{(n)}(\cO_1) \times \Gamma_r^{(n)}(\cO_2) \times \Gamma_r^{(n)}(\cO_3))}
			\\ =
			\frac{\vol(\Gamma_r^{(n)}(\cO') \cap [\Gamma_r^{(n)}(\tilde{\cO}) \times \Gamma_r^{(n)}(\cO_3)]) }{\vol(\Gamma_r^{(n)}(\tilde{\cO}) \times \Gamma_r^{(n)}(\cO_3))}
			\frac{\vol(\Gamma_r^{(n)}(\tilde{\cO}) \times \Gamma_r^{(n)}(\cO_3)) }{\vol(\Gamma_r^{(n)}(\cO_1) \times \Gamma_r^{(n)}(\cO_2) \times \Gamma_r^{(n)}(\cO_3))}.
		\end{multline}
		To estimate the first term, we observe that for a fixed $\cO$, the net $(\tilde{\cO},\cO_3)$ indexed by $(\cO_1,\cO_2,\cO_3) \searrow (\mu_1,\mu_2,\mu_3)$ is a subnet of the net $(\cO_{1,2},\cO_3)$ of neighborhoods of $(\mu,\mu_3)$ (here $\cO_{1,2}$ is another variable distinct from $\cO$ but also representing neighborhoods of $\mu$).  We therefore have
		\begin{multline*}
			\liminf_{(\cO_1,\cO_2,\cO_3) \searrow (\mu_1,\mu_2,\mu_3)} \lim_{n \to \cU} \frac{\vol(\Gamma_r^{(n)}(\cO') \cap [\Gamma_r^{(n)}(\tilde{\cO}) \times \Gamma_r^{(n)}(\cO_3)]) }{\vol(\Gamma_r^{(n)}(\tilde{\cO}) \times \Gamma_r^{(n)}(\cO_3))} \\
			\geq \liminf_{(\cO_{1,2},\cO_3) \searrow (\mu,\mu_3)} \lim_{n \to \cU} \frac{\vol(\Gamma_r^{(n)}(\cO') \cap [\Gamma_r^{(n)}(\cO_{1,2}) \times \Gamma_r^{(n)}(\cO_3)])}{\vol(\Gamma_r^{(n)}(\cO_{1,2}) \times \Gamma_r^{(n)}(\cO_3))}.
		\end{multline*}
		On the right-hand side, applying $\liminf_{\cO \searrow \mu}$ does nothing, and then applying $\liminf_{\cO' \searrow \mu'}$ yields the limit in the definition of $\mu'$ being an independent join of $\mu$ and $\mu_3$, and thus
		\[
		\liminf_{\cO' \searrow \mu'} \liminf_{\cO \searrow \mu} \liminf_{(\cO_1,\cO_2,\cO_3) \searrow (\mu_1,\mu_2,\mu_3)} \lim_{n \to \cU} \frac{\vol(\Gamma_r^{(n)}(\cO') \cap [\Gamma_r^{(n)}(\tilde{\cO}) \times \Gamma_r^{(n)}(\cO_3)]) }{\vol(\Gamma_r^{(n)}(\tilde{\cO}) \times \Gamma_r^{(n)}(\cO_3))} \geq 1.
		\]
		Now the second term on the right-hand side of \eqref{eq:volumeproduct} is
		\begin{align*}
			\frac{\vol(\Gamma_r^{(n)}(\tilde{\cO}) \times \Gamma_r^{(n)}(\cO_3)) }{\vol(\Gamma_r^{(n)}(\cO_1) \times \Gamma_r^{(n)}(\cO_2) \times \Gamma_r^{(n)}(\cO_3))} &= \frac{\vol(\Gamma_r^{(n)}(\tilde{\cO})) }{\vol(\Gamma_r^{(n)}(\cO_1) \times \Gamma_r^{(n)}(\cO_2))} \\
			&= \frac{\vol(\Gamma_r^{(n)}(\cO) \cap [\Gamma_r^{(n)}(\cO_1) \times \Gamma_r^{(n)}(\cO_2)]) }{\vol(\Gamma_r^{(n)}(\cO_1) \times \Gamma_r^{(n)}(\cO_2))}.
		\end{align*}
		Taking the $\liminf$ as $(\cO_1,\cO_2,\cO_3) \searrow (\mu_1,\mu_2,\mu_3)$ is equivalent to taking the $\liminf$ as $(\cO_1,\cO_2) \searrow (\mu_1,\mu_2)$ since the expression is independent of $\cO_3$.  After that, we apply $\liminf_{\cO \searrow \mu}$, which results in the limit in the definition of $\mu$ being an independent join of $\mu_1$ and $\mu_2$.  Therefore,
		\[
		\liminf_{\cO' \searrow \mu'} \liminf_{\cO \searrow \mu} \liminf_{(\cO_1,\cO_2,\cO_3) \searrow (\mu_1,\mu_2,\mu_3)} \lim_{n \to \cU} \frac{\vol(\Gamma_r^{(n)}(\cO) \cap [\Gamma_r^{(n)}(\cO_1) \times \Gamma_r^{(n)}(\cO_2)]) }{\vol(\Gamma_r^{(n)}(\cO_1) \times \Gamma_r^{(n)}(\cO_2))} = \liminf_{\cO' \searrow \mu'} 1 = 1.
		\]
		Thus, when we multiply out the two terms in \eqref{eq:volumeproduct}, we obtain
		\[
		\liminf_{\cO' \searrow \mu'} \liminf_{\cO \searrow \mu} \liminf_{(\cO_1,\cO_2,\cO_3) \searrow (\mu_1,\mu_2,\mu_3)}
		\lim_{n \to \cU} \frac{\vol(\Gamma_r^{(n)}(\cO') \cap [\Gamma_r^{(n)}(\cO_1) \times \Gamma_r^{(n)}(\cO_2) \times \Gamma_r^{(n)}(\cO_3)]) }{\vol(\Gamma_r^{(n)}(\cO_1) \times \Gamma_r^{(n)}(\cO_2) \times \Gamma_r^{(n)}(\cO_3))} \geq 1.
		\]
		On the other hand, this limit is clearly less than or equal to $1$ by monotonicity of Lebesgue measure, so the proof is complete.
	\end{proof}
	
	We remark that if $\mu$ is an independent join of $\mu_1$ and $\mu_2$ according to Definition \ref{def:independentjoin}, then
	\[
	\chi^{\cU}(\mu) = \chi^{\cU}(\mu_1) + \chi^{\cU}(\mu_2).
	\]
	The argument is the same as \cite[Theorem 3.8]{Voiculescu1998}.
	Indeed, the inequality $\chi^{\cU}(\mu) \leq \chi^{\cU}(\mu_1) + \chi^{\cU}(\mu_2)$ follows because for all neighborhoods $\cO_1$ and $\cO_2$ of $\mu_1$ and $\mu_2$ respectively, $\Gamma_r^{(n)}(\cO_1) \times \Gamma_r^{(n)}(\cO_2)$ is a microstate space for $\mu$.  On the other hand, for any neighborhood $\cO$ of $\mu$, we have
	\begin{align*}
		\frac{1}{n^2} &\log \vol \Gamma_r^{(n)}(\cO) + (d_1 + d_2) \log n \\
		&\geq \frac{1}{n^2} \log \vol (\Gamma_r^{(n)}(\cO) \cap [\Gamma_r^{(n)}(\cO_1) \times \Gamma_r^{(n)}(\cO_2)]) + (d_1 + d_2) \log n \\
		&= \frac{1}{n^2} \log \frac{\vol(\Gamma_r^{(n)}(\cO) \cap [\Gamma_r^{(n)}(\cO_1) \times \Gamma_r^{(n)}(\cO_2)]) }{\vol(\Gamma_r^{(n)}(\cO_1) \times \Gamma_r^{(n)}(\cO_2))} + \frac{1}{n^2} \log \vol \Gamma_r^{(n)}(\cO_1) + d_1 \log n + \frac{1}{n^2} \log \vol \Gamma_r^{(n)}(\cO_2) + d_2 \log n \\
		&\geq \log \frac{\vol(\Gamma_r^{(n)}(\cO) \cap [\Gamma_r^{(n)}(\cO_1) \times \Gamma_r^{(n)}(\cO_2)]) }{\vol(\Gamma_r^{(n)}(\cO_1) \times \Gamma_r^{(n)}(\cO_2))} + \frac{1}{n^2} \log \vol \Gamma_r^{(n)}(\cO_1) + d_1 \log n + \frac{1}{n^2} \log \vol \Gamma_r^{(n)}(\cO_2) + d_2 \log n
	\end{align*}
	Taking the limit as $n \to \cU$, we get
	\[
	\chi_r^{\cU}(\cO) \geq \lim_{n \to \cU} \log \frac{\vol(\Gamma_r^{(n)}(\cO) \cap [\Gamma_r^{(n)}(\cO_1) \times \Gamma_r^{(n)}(\cO_2)]) }{\vol(\Gamma_r^{(n)}(\cO_1) \times \Gamma_r^{(n)}(\cO_2))} + \chi_r^{\cU}(\mu_1) + \chi_r^{\cU}(\mu_2).
	\]
	Then by taking the $\liminf$ as $(\cO_1,\cO_2) \searrow (\mu_1,\mu_2)$ and then the $\liminf$ as $\cO \searrow \mu$, we obtain $\chi_r^{\cU}(\mu) \geq \chi_r^{\cU}(\mu_1) + \chi_r^{\cU}(\mu_2)$.
	
	On the other hand, we do not know if $\chi^{\cU}(\mu) = \chi^{\cU}(\mu_1) + \chi^{\cU}(\mu_2)$ implies that $\mu$ is an independent join of $\mu_1$ and $\mu_2$ according to Definition \ref{def:independentjoin}.  Rather it only implies the weaker condition that
	\[
	\liminf_{\cO \searrow \mu} \liminf_{(\cO_1,\cO_2) \searrow (\mu_1,\mu_2)}
	\lim_{n \to \cU} \frac{1}{n^2} \log \frac{\vol(\Gamma_r^{(n)}(\cO) \cap [\Gamma_r^{(n)}(\cO_1) \times \Gamma_r^{(n)}(\cO_2)]) }{\vol(\Gamma_r^{(n)}(\cO_1) \times \Gamma_r^{(n)}(\cO_2))} = 0.
	\]
	For Voiculescu's free entropy for non-commutative $*$-laws (with limits respect to an ultrafilter $\cU$), the freely independent join is the unique $*$-law $\mu$ satisfying $\chi_{\qf}^{\cU}(\mu) = \chi_{\qf}^{\cU}(\mu_1) + \chi_{\qf}^{\cU}(\mu_2)$; the case of $d_1 = d_2 = 1$ is handled in \cite[Proposition 4.3]{VoiculescuFE4}.  However, the natural proof for $d_1$, $d_2 > 1$ based on the ideas of \cite{Voiculescu1998} would rely on unitary invariance to obtain freeness, which as explained above does not work in the setting of full types.  
	
	\begin{question}
		Let $\mu_1$ and $\mu_2$ be types of $d_1$-tuple and $d_2$-tuple respectively.  Under what conditions is there a unique join $\mu$ such that $\chi^{\cU}(\mu) = \chi^{\cU}(\mu_1) + \chi^{\cU}(\mu_2)$?
	\end{question}
	
	\subsection{Independence axioms from model theory}
	
	Independence has been studied in model theory in the context of stable theories.  It turns out that the theory of any $\mathrm{II}_1$ factor is not stable \cite{FHS2013}.  However, there are still situations where unstable theories admit a ``well-behaved'' independence relation.  For the present discussion, we will focus on the axioms that we want an independence relation in $\cL_{\tr}$ to satisfy.
	
	\begin{definition}[{See \cite{Adler2009}, \cite{EG2012}}]
		Let $\cQ$ be a ``large'' tracial $\mathrm{W}^*$-algebra.  For ``small'' sets $A, B, C \subseteq \cQ$, we say that $A \equiv_C B$ if there is an automorphism of $\cQ$ mapping $A$ to $B$ and fixing $C$ pointwise (and $A \equiv B$ if there is any automorphism mapping $A$ to $B$).  An \emph{independence relation} is a ternary relation $A \forkindep[C] B$ between ``small'' sets in $\cQ$ (read as ``$A$ is independent from $B$ over $C$'') satisfying the following properties:
		\begin{enumerate}[(1)]
			\item \emph{Invariance:} If $A, B, C \equiv A', B', C'$, then $A \forkindep[C] B$ if and only if $A' \forkindep[C'] B'$.
			\item \emph{Monotonicity:} If $A' \subseteq A$ and $B' \subseteq B$ and $A \forkindep[C] B$, then $A' \forkindep[C] B'$.
			\item \emph{Base monotonicity:} Suppose $D \subseteq C \subseteq B$.  If $A \forkindep[C] B$, then $A \forkindep[D] B$.
			\item \emph{Transitivity:}  Suppose $D \subseteq C \subseteq B$.  If $B \forkindep[C] A$ and $C \forkindep[D] A$, then $B \forkindep[D] A$.
			\item \emph{Normality:} $A \forkindep[C] B$ implies $A \cup C \forkindep[C] B$.
			\item \emph{Extension:} If $A \forkindep[C] B$ and $B \subseteq \tilde{B}$, then there exists $A' \equiv_{B \cup C} A$ such that $A' \forkindep[C] \tilde{B}$.
			\item \emph{Finite character:} If $A_0 \forkindep[C] B$ for all finite $A_0 \subseteq A$, then $A \forkindep[C] B$.
			\item \emph{Local character:} For every $A$, there is a cardinal $\kappa(A)$ such that for every $B$ there exists $C \subseteq B$ with $|C| < \kappa(A)$ such that $A \forkindep[C] B$.
		\end{enumerate}
		We also define the following properties that an independence relation may have
		\begin{itemize}
			\item[(9)] \emph{Symmetry:} If $A \forkindep[C] B$, then $B \forkindep[C] A$.
			\item[(10)] \emph{Full existence:} For every $A$, $B$, and $C$, there exists $A' \equiv_C A$ such that $A' \forkindep[C] B$.
		\end{itemize}
	\end{definition}
	
	Goldbring, Hart, and Sinclair \cite{GHS2013} asked whether there was some independence relation $A \forkindep[C] B$ related to $\mathrm{W}^*(A,C)$ being freely independent from $\mathrm{W}^*(B,C)$ with amalgamation over $\mathrm{W}^*(C)$.  Free independence with amalgamation is a moment condition similar to free independence except using the conditional expectation onto $\mathrm{W}^*(C)$ rather than the trace. The properties (1) - (5), (7), (9) above hold for free independence with amalgamation.  However, free independence with amalgamation only depends on the quantifier-free type of $(A,B,C)$, and ideally we would want an independence relation that depends on the full type.
	
	\begin{question}
		Let $\cQ=\prod_{n \to \cU} M_n(\C)$.  Does there exist an independence relation on countable subsets of $\cQ$ (that is, a relation satisfying (1) - (8))?  Furthermore, does there exist such an independence relation such that $A \forkindep[C] B$ implies that $\mathrm{W}^*(A,C)$ and $\mathrm{W}^*(B,C)$ are freely independent with amalgamation over $\mathrm{W}^*(C)$?
	\end{question}
	
	One might na\"ively hope that there could be an independence relation satisfying the above properties such that the full type of $(A,B,C)$ is determined by the types of $(A,C)$ and $(B,C)$ and the specification of independence $A \forkindep[C] B$.  However, this is too much to ask because having a unique way of creating an independent join would imply that theory $\rT$ is stable (see \cite[\S 8]{TZ2012} for the discrete model theory case), but \cite{FHS2013} showed that the theory of a $\mathrm{II}_1$ factor is never stable.
	
	The main difficulty in studying free independence with amalgamation in the framework of the independence axioms above concerns the extension property (8) and the related full existence property (10).  By \cite[Remark 1.2]{Adler2009}, if (1), (2), (3), (4), (5), (7), and (9) hold, then the extension property (8) is equivalent to the full existence property (10).  To show (10), we would want to know that independent products always exist in $\cQ$ in the following sense:  If $\cM \subseteq \cM_1 \subseteq \cQ$ and $\cM \subseteq \cM_2 \subseteq \cQ$, then there exists a copy $\tilde{\cM}_2$ of $\cM_2$ and an isomorphism $\Phi: \cM_2 \to \tilde{\cM}_2$ that restricts to the identity on $\cM$, such that $\cM_1$ and $\tilde{\cM}_2$ are ``independent'' over $\cM$ with respect to whatever independence relation we are studying, and such that the $\cM$-type of $\phi(\bX)$ is the same as the $\cM$-type of $\bX$ for all tuples $\bX$ in $\cM_2$.
	
	We can state the same problem in terms of automorphisms of $\cQ$ as follows.  Assuming the continuum hypothesis, if $\alpha$ and $\beta$ are embeddings of $\cM$ into $\cQ$ such that $\alpha(\bX)$ and $\beta(\bX)$ have the same type for every tuple $\bX$ from $\cM$, then $\alpha$ and $\beta$ are conjugate by an automorphism of $\cQ$.  Thus, given $\cM \subseteq \cM_1 \subseteq \cQ$ and $\cM \subseteq \cM_2 \subseteq \cQ$, we would need there to exist an automorphism $\phi$ of $\cQ$ that fixes $\cM$ pointwise and such that $\cM_1$ and $\phi(\cM_2)$ are independent according to whatever independence relation we are studying.  Thus, the question is whether there are ``enough'' automorphisms fixing $\cM$ that we can move the $\cM_2$ into an ``independent'' position from $\cM_1$.
	
	But at this point, it is not even known whether we can move $\cM_2$ so as to become freely independent from $\cM_1$ with amalgamation over $\cM$.  Indeed, it is not known in general whether there even exists an embedding of the amalgamated free product $\cM_1 *_{\cM} \cM_2$ into $\cQ$ (Brown, Dykema, and Jung \cite{BDJ2008} proved this when $\cM$ is amenable).  
	
	As in \ref{def:independentjoin}, we could try choosing the embeddings of $\cM_1$ and $\cM_2$ ``uniformly at random'' out of all embeddings that restrict to a given embedding of $\cM$.  In other words, fixing generators $\bX$ and $\bY$ for $\cM_1$ and $\cM_2$ and generators $\bZ$ for $\cM$, we would study the relative microstate spaces for $\bX$ conditioned on $\bZ$ and $\bY$ conditioned on $\bZ$.  Let us assume for the sake of the heuristic discussion that $\bX$, $\bY$, and $\bZ$ are finite tuples.  Fixing a sequence of matrix approximations $\bZ^{(n)}$ such that $\bZ = [\bZ^{(n)}]_{n \in \cU}$ in $\cQ$, we define for neighborhoods $\cO_1$ of $\tp^{\cQ}(\bX,\bZ)$ the microstate space
	\[
	\Gamma_r^{(n)}(\cO_1 | \bZ^{(n)} \rightsquigarrow \bZ) := \{\bX' \in M_n(\C)^{d_1}: \tp^{M_n(\C)}(\bX',\bZ^{(n)}) \in \cO_1)
	\]
	and the analogous microstate space for each neighborhood $\cO_2$ of $\tp^{\cQ}(\bY,\bZ)$.  Then just as in Definition \ref{def:independentjoin}, we can ask:
	
	\begin{question}
		In the setup above, does there exist some $\bY' \in \cQ^d$ with $\tp^{\cQ}(\bY',\bZ) = \tp^{\cQ}(\bY,\bZ)$ and
		\begin{multline*}
			\liminf_{\cO \searrow \tp^{\cQ}(\bX,\bY',\bZ)} \liminf_{(\cO_1,\cO_2) \searrow (\tp^{\cQ}(\bX,\bZ), \tp^{\cQ}(\bY,\bZ))} \lim_{n \to \cU} \\ \frac{\vol(\Gamma_r^{(n)}(\cO \mid \bZ^{(n)} \rightsquigarrow \bZ) \cap [\Gamma_r^{(n)}(\cO_1 \mid \bZ^{(n)} \rightsquigarrow \bZ) \times \Gamma_r^{(n)}(\cO_2 \mid \bZ^{(n)} \rightsquigarrow \bZ)]) }{\vol(\Gamma_r^{(n)}(\cO_1 \mid \bZ^{(n)} \rightsquigarrow \bZ) \times \Gamma_r^{(n)}(\cO_2 \mid \bZ^{(n)} \rightsquigarrow \bZ))} = 1?
		\end{multline*}
	\end{question}
	
	We also point out that even if such a $\bY'$ exists, it is not immediate whether $\tp^{\cQ}(\bX,\bY',\bZ)$ only depends on $\tp^{\cQ}(\bX,\bZ)$ and $\tp^{\cQ}(\bY,\bZ)$, independent of the particular choice of $\bZ$ and $\bZ^{(n)}$ realizing those types.
	
	\section{Model theory and non-commutative optimization problems} \label{sec:optimization}
	
	\subsection{Wasserstein distance}
	
	Biane and Voiculescu \cite{BV2001} studied a non-commutative analog of the Wasserstein distance.  For two quantifier-free types $\mu$, $\nu \in \mathbb{S}_{\qf,d}(\rT_{\tr})$, the Wasserstein distance is
	\begin{equation} \label{eq:Wasserstein}
		d_W(\mu,\nu) = \inf \{ \norm{\bX - \bY}_{L^2(\cM)}: \cM \models \rT_{\tr}, \tp_{\qf}^{\cM}(\bX) = \mu, \tp_{\qf}^{\cM}(\bY) = \nu  \}.
	\end{equation}
	This is the non-commutative tracial version of the classical Wasserstein distance defined for probability measures on $\R^d$ with finite second moment, which plays a major role in optimal transport theory.  The Wasserstein distance is defined by an optimization problem, namely, to minimize the $L^2$-distance between some $\bX$ and $\bY$ with quantifier-free types $\mu$ and $\nu$ respectively.  We remark that Song studied the $L^1$ classical Wasserstein distance between types in atomless probability spaces from a model-theoretic viewpoint in \cite[\S 5.3-5.4]{SongThesis}.
	
	The analogous distance for full types $\mu$ and $\nu$ in $\mathbb{S}_d(\rT_{\tr})$ is
	\begin{equation} \label{eq:typeWasserstein}
		d_W(\mu,\nu) = \inf \{ \norm{\bX - \bY}_{L^2(\cM)}: \cM \models \rT_{\tr}, \tp^{\cM}(\bX) = \mu, \tp^{\cM}(\bY) = \nu  \}.
	\end{equation}
	Of course, since the type determines the theory of $\cM$, we have $d_W(\mu,\nu) = \infty$ unless the types $\mu$ and $\nu$ can be realized in the same tracial $\mathrm{W}^*$-algebra.  For this reason, it is natural to fix a complete theory $\rT$ (for instance the theory of the hyperfinite $\mathrm{II}_1$ factor $\cR$ or the theory of some matrix ultraproduct $\cQ$) and then study the Wasserstein distance on $\mathbb{S}_d(\rT)$.  This Wasserstein distance for types is exactly the $d$-metric between types for a given complete theory described in \cite[\S 8, p.\ 44]{BYBHU2008}.
	
	For each $r$, the Wasserstein distance $d_W$ provides a complete metric on $\mathbb{S}_{d,r}(\rT)$ as observed in \cite[Proposition 8.8]{BYBHU2008} and the topology of $d_W$ refines the logic topology \cite[Proposition 8.7]{BYBHU2008}.  The analogous properties were also shown for the non-commutative Wasserstein distance on quantifier-free types.  The topology refines the weak-$*$ topology by \cite[Proposition 1.4(b)(c)]{BV2001}.  One can also argue similarly to Proposition \cite[Proposition 8.8]{BYBHU2008} that $d_W$ gives a complete metric on the space of quantifier-free types.
	
	In classical probability theory, one can always find $\bX$ and $\bY$  in $L^\infty[0,1]$ with probability distributions $\mu$ and $\nu$ that achieve the infimum in \eqref{eq:Wasserstein} (such a pair achieving the infimum is called an \emph{optimal coupling}).  By contrast, in the non-commutative setting of \cite{BV2001}, one has very little control over which tracial von Neumann algebra is needed to achieve an optimal coupling, as explained in \cite[\S 5]{GJNS2021}.  For instance, given the negative solution to the Connes-embedding problem proposed in \cite{JNVWY2020}, for some $d$ and $r$, there exist $\mu$ and $\nu \in \mathbb{S}_{\qf,d,r}(\rT_{\tr})$ such that each of $\mu$ and $\nu$ is the $*$-law of a some $d$-tuple in a matrix algebra $M_n(\C)$, and yet the infimum \eqref{eq:Wasserstein} can only be achieved using a non-Connes embeddable algebra $\cM$ \cite[Corollary 5.14]{GJNS2021} (for a survey on the Connes embedding problem, see for instance \cite{Capraro2010} 
	).  This is related to the issue discussed in \S \ref{subsec:classicalQE} that $L^\infty[0,1]$ is $\aleph_0$-categorical and admits quantifier elimination, but very few tracial $\mathrm{W}^*$-algebras have these properties.  This difficulty in the tracial $\mathrm{W}^*$-setting seems to be ameliorated by using Wasserstein distance for full types rather than quantifier-free types.  Indeed, if we fix a countably saturated model $\cM$ for a given theory $\rT$, then the Wasserstein distance between two types $\mu, \nu \in \mathbb{S}_d(\rT)$ can always be achieved by some coupling in $\cM$.  Moreover, in any model of $\rT$, there are approximately optimal couplings.
	
	Thus, consistent with the discussion in \ref{subsec:modelintro}, it is worth considering whether the space of full types relative to a complete theory $\rT \supseteq \rT_{\tr}$ may be a better analog for the space of probability distributions on $\R^d$ than the space of quantifier-free types.  However, even in this setting, the weak-$*$ and Wasserstein topologies will usually not agree; by the continuous Ryll-Nardzewski theroem \cite[Proposition 12.10]{BYBHU2008} for a complete theory $\rT$, the logic topology and $d$-metric topology agree if and only if $\rT$ is $\aleph_0$-categorical.  More precisely, using similar reasoning to \cite[\S 5.4]{GJNS2021}, we can characterize the agreement of the weak-$*$ and Wasserstein topologies at a particular point $\mu \in \mathbb{S}_{d,r}(\rT)$ through a stability or lifting property (Corollary \ref{cor:twotopologies} below), which follows from the ultraproduct characterizations of weak-$*$ convergence and of Wasserstein convergence.  The characterization of weak-$*$ convergence is as follows.
	
	\begin{lemma} \label{lem:ultraproductweakstar}
		Fix $r \in (0,\infty)$ and a free ultrafilter $\cU$ on $\N$.  For $n \in \N$, let $\cM^{(n)} \models \rT_{\tr}$ and $\bX^{(n)} \in (D_r^{\cM^{(n)}})^d$.  Let $\cM$ be a separable model of $\rT$ and $\bX \in (D_r^{\cM})^d$.  Then the following are equivalent:
		\begin{enumerate}[(1)]
			\item $\lim_{n \to \cU} \tp^{\cM^{(n)}}(\bX^{(n)}) = \tp^{\cM}(\bX)$ in the weak-$*$ topology on $\mathbb{S}_{d,r}(\rT)$.
			\item There exists an elementary embedding $\iota: \cM \to \prod_{n \to \cU} \cM^{(n)}$ such that $\iota(\bX) = [\bX^{(n)}]_{n \in \N}$.
		\end{enumerate}
	\end{lemma}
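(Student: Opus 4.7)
The plan is to prove the two directions separately.  Direction (2) $\Rightarrow$ (1) is immediate from the continuous \L{}o\'{s} theorem for ultraproducts, while (1) $\Rightarrow$ (2) requires the countable saturation of the ultraproduct together with separability of $\cM$.

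For (2) $\Rightarrow$ (1): set $\cQ = \prod_{n \to \cU} \cM^{(n)}$.  The continuous \L{}o\'{s} theorem yields
\[
\phi^{\cQ}\bigl([\bX^{(n)}]_n\bigr) = \lim_{n \to \cU} \phi^{\cM^{(n)}}(\bX^{(n)})
\]
for every $\phi \in \cF_d$.  Combined with elementarity of $\iota$ and the identification $\iota(\bX) = [\bX^{(n)}]_n$, this gives $\phi^{\cM}(\bX) = \lim_{n \to \cU} \phi^{\cM^{(n)}}(\bX^{(n)})$ for every $\phi \in \cF_d$, which is precisely the asserted weak-$*$ convergence of types.

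For (1) $\Rightarrow$ (2): write $\bY := [\bX^{(n)}]_n \in (D_r^{\cQ})^d$.  By \L{}o\'{s} and hypothesis (1), $\tp^{\cQ}(\bY) = \tp^{\cM}(\bX)$.  The plan is to extend $\bX \mapsto \bY$ to an elementary embedding $\iota: \cM \to \cQ$.  Using separability of $\cM$, enumerate $(W_i)_{i \in \N} \subseteq \cM$ together with radii $R_i > 0$ such that $W_i \in D_{R_i}^{\cM}$ and $\{W_i : R_i \leq k\}$ is $\norm{\cdot}_2$-dense in $D_k^{\cM}$ for each positive integer $k$.  Consider the partial type $\Phi$ over the parameter $\bY$ consisting of all conditions
\[
\phi(\bY, y_1, \dots, y_k) = \phi^{\cM}(\bX, W_1, \dots, W_k), \qquad k \in \N,\ \phi \in \cF_{d+k},
\]
with each $y_i$ confined to $D_{R_i}$.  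To check approximate finite satisfiability, fix $\phi_1, \dots, \phi_m$ and tolerances $\epsilon_1, \dots, \epsilon_m > 0$, and set $c_j := \phi_j^{\cM}(\bX, W_1, \dots, W_k) \in \R$.  The existential definable predicate
\[
\psi(\mathbf{x}) = \inf_{y_1 \in D_{R_1}, \dots, y_k \in D_{R_k}} \max_{j=1, \dots, m} \max\bigl(|\phi_j(\mathbf{x}, y_1, \dots, y_k) - c_j| - \epsilon_j,\ 0\bigr)
\]
satisfies $\psi^{\cM}(\bX) = 0$ (take $y_i := W_i$), so $\psi^{\cQ}(\bY) = 0$ by $\tp^{\cQ}(\bY) = \tp^{\cM}(\bX)$, producing witnesses $y_i \in D_{R_i}^{\cQ}$.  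Countable saturation of $\cQ$ then upgrades this to an actual realization $(Y_i)_i \subseteq \cQ$ of $\Phi$; in particular $\tp^{\cQ}(\bY, Y_1, Y_2, \dots) = \tp^{\cM}(\bX, W_1, W_2, \dots)$.

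Setting $\iota(W_i) := Y_i$ defines a trace-preserving $*$-homomorphism on the $*$-subalgebra generated by the $W_i$ (from preservation of the quantifier-free type and the $\norm{\cdot}_2$-continuity of the $*$-algebraic operations and trace).  This extends by $\norm{\cdot}_2$-continuity on each operator-norm ball to a normal trace-preserving $*$-homomorphism $\iota: \cM \to \cQ$.  Elementarity is inherited from preservation of the full type on the dense sequence together with uniform continuity of definable predicates on products of domains (Observation \ref{obs:defpredunifcont}).  The main technical step is the saturation argument: each finite subset of $\Phi$ must be phrased as the vanishing of an existential definable predicate in $\bX$ alone, so that the assumed type equality transfers witnesses from $\cM$ to $\cQ$.
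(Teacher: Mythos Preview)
Your proof is correct and follows the same route as the paper's: $(2)\Rightarrow(1)$ by the continuous \L{}o\'{s} theorem, and $(1)\Rightarrow(2)$ via countable saturation of the ultraproduct combined with separability of $\cM$, which is precisely the content of the reference \cite[Lemma 4.12]{FHS2014} that the paper invokes. Your write-up simply unpacks that citation in detail.

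One small omission worth patching: you define $\iota$ on the dense sequence by $\iota(W_i) := Y_i$ and extend, but you do not explicitly verify the conclusion $\iota(\bX) = \bY = [\bX^{(n)}]_n$. This follows easily --- either include the components $X_1,\dots,X_d$ among the $W_i$ from the start (then the distance formula $d(x_j,w_j)=0$, witnessed in $\cM$, forces $Y_j=(\bY)_j$ in $\cQ$), or argue by continuity using that $d^{\cM}(X_j,W_{i_k}) = d^{\cQ}((\bY)_j,Y_{i_k})$ for any approximating subsequence $W_{i_k}\to X_j$. It is worth stating one of these explicitly, since $\iota(\bX)=[\bX^{(n)}]_n$ is the whole point of (2).
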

	
	\begin{proof}
		(2) $\implies$ (1) because by the fundamental theorem of ultraproducts $\tp^{\prod_{n \to \cU} \cM^{(n)}}([\bX^{(n)}]_{n\in \N} = \lim_{n \to \cU} \tp^{\cM^{(n)}}(\bX^{(n)})$.
		
		(1) $\implies$ (2) follows by similar reasoning as \cite[Lemma 4.12]{FHS2014}.
	\end{proof}
	
	To characterize Wasserstein convergence, we use the model-theoretic analog of the factorizable maps of \cite{AD2006}.
	
	\begin{definition}
		Let $\cM$ and $\cN$ be models of a complete $\rT \supseteq \rT_{\tr}$.  An \emph{elementarily factorizable map} $\Phi: \cM \to \cN$ is a map of the form $\Phi = \beta^* \alpha$ where $\alpha: \cM \to \cP$ and $\beta: \cN \to \cP$ are elementary embeddings into another $\cP \models \rT$, and $\beta^*: \cP \to \cN$ is the trace-preserving conditional expectation.
	\end{definition}
	
	\begin{lemma} \label{lem:ultraproductWasserstein}
		Fix $r \in (0,\infty)$ and a free ultrafilter $\cU$ on $\N$.  For $n \in \N$, let $\cM^{(n)} \models \rT_{\tr}$ and $\bX^{(n)} \in (D_r^{\cM^{(n)}})^d$.  Let $\cM$ be a separable model of $\rT$ and $\bX \in (D_r^{\cM})^d$.  Then the following are equivalent:
		\begin{enumerate}[(1)]
			\item $\lim_{n \to \cU} \tp^{\cM^{(n)}}(\bX^{(n)}) = \tp^{\cM}(\bX)$ with respect to Wasserstein distance.
			\item There exists an elementary embedding $\iota: \cM \to \prod_{n \to \cU} \cM^{(n)}$ such that $\iota(\bX) = [\bX^{(n)}]_{n \in \mathbb{N}}$, and there exist elementarily factorizable maps $\Phi^{(n)}: \cM \to \cM^{(n)}$ such that
			\[
			\iota(Z) = [\Phi^{(n)}(Z)]_{n \in \cN} \text{ for all } Z \in \mathrm{W}^*(\bX).
			\]
		\end{enumerate}
		In fact, if (1) holds, then every elementary $\iota: \cM \to \cM^{(n)}$ with $\iota(\bX) = [\bX^{(n)}]_{n \in \N}$ admits such a family $\Phi^{(n)}$ of elementary factorizable maps as in (2).
	\end{lemma}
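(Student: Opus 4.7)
The plan is to prove both directions by ultraproduct arguments that exploit the fact that elementary embeddings are $L^2$-isometries while trace-preserving conditional expectations are $L^2$-contractions. I begin with (2) $\implies$ (1). Given the factorization data of (2), set $\bZ^{(n)} := \alpha_n(\bX)$ and $\bY^{(n)} := \beta_n(\bX^{(n)})$ in $\cP_n$; these tuples realize the prescribed types (since $\alpha_n$, $\beta_n$ are elementary), so $d_W(\tp^{\cM^{(n)}}(\bX^{(n)}), \tp^{\cM}(\bX)) \leq \|\bZ^{(n)} - \bY^{(n)}\|_2$ and it suffices to show the latter tends to $0$ along $\cU$. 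Since $\beta_n \beta_n^*$ is the $L^2$-projection onto $\beta_n(\cM^{(n)})$ and $\beta_n(\bX^{(n)}) \in \beta_n(\cM^{(n)})$, a Pythagorean decomposition yields
\[
\|\bZ^{(n)} - \bY^{(n)}\|_2^2 = \bigl(\|\bX\|_2^2 - \|\Phi^{(n)}(\bX)\|_2^2\bigr) + \|\Phi^{(n)}(\bX) - \bX^{(n)}\|_2^2.
\]
The second summand vanishes in the ultralimit because $[\Phi^{(n)}(\bX)] = \iota(\bX) = [\bX^{(n)}]$ in $\cQ$, and the first because $\|\Phi^{(n)}(\bX)\|_2 \to \|\iota(\bX)\|_2 = \|\bX\|_2$ (trace preservation).

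For (1) $\implies$ (2), Wasserstein convergence implies weak-$*$ convergence, so Lemma \ref{lem:ultraproductweakstar} supplies an elementary $\iota$ with $\iota(\bX) = [\bX^{(n)}]$. To build $\Phi^{(n)}$, select for each $n$ a near-optimal coupling: a tracial $\mathrm{W}^*$-algebra $\cR_n$ with tuples $\bZ^{(n)}, \bY^{(n)} \in (D_r^{\cR_n})^d$ realizing $\tp^{\cM}(\bX)$ and $\tp^{\cM^{(n)}}(\bX^{(n)})$ respectively, and $\|\bZ^{(n)} - \bY^{(n)}\|_2 \to 0$ along $\cU$ (operator-norm membership in $(D_r)^d$ is forced by the types since $D_r$ is $d$-metric closed). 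Because full types determine the theory, $\cR_n \models \rT := \Th(\cM)$. A standard continuous-model-theoretic elementary-amalgamation argument then produces an elementary extension $\cP_n$ of $\cR_n$ together with elementary embeddings $\alpha_n \colon \cM \to \cP_n$ and $\beta_n \colon \cM^{(n)} \to \cP_n$ with $\alpha_n(\bX) = \bZ^{(n)}$ and $\beta_n(\bX^{(n)}) = \bY^{(n)}$. Set $\Phi^{(n)} := \beta_n^* \alpha_n$. For any $Z \in \mathrm{W}^*(\bX)$, write $Z = f(\bX)$ for a quantifier-free definable function $f$ (Proposition \ref{prop:deffuncrealization}); then $\Phi^{(n)}(Z) = \beta_n^*(f(\bZ^{(n)}))$ and $f(\bX^{(n)}) = \beta_n^*(f(\bY^{(n)}))$, so using $L^2$-contractivity of $\beta_n^*$ and uniform $L^2$-continuity of $f$ on $(D_r)^d$ (Observation \ref{obs:defpredunifcont}) we obtain
\[
\|\Phi^{(n)}(Z) - f(\bX^{(n)})\|_2 \leq \|f(\bZ^{(n)}) - f(\bY^{(n)})\|_2 \to 0
\]
along $\cU$, hence $[\Phi^{(n)}(Z)] = [f(\bX^{(n)})] = f([\bX^{(n)}]) = f(\iota(\bX)) = \iota(Z)$. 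The ``in fact'' clause is automatic: the condition $\iota(Z) = [\Phi^{(n)}(Z)]$ for $Z \in \mathrm{W}^*(\bX)$ depends only on $\iota(\bX) = [\bX^{(n)}]$, since $\iota|_{\mathrm{W}^*(\bX)}$ is determined by $\iota(\bX)$ via Proposition \ref{prop:deffuncrealization}, so the same $\Phi^{(n)}$ works for every such $\iota$.

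The principal technical obstacle is the amalgamation step producing $\cP_n$ with the two prescribed elementary embeddings. From the equality $\tp^{\cR_n}(\bY^{(n)}) = \tp^{\cM^{(n)}}(\bX^{(n)})$ of full types in $\mathbb{S}_d(\rT)$, the continuous compactness theorem yields an elementary extension of $\cR_n$ into which $\cM^{(n)}$ embeds elementarily via $\bX^{(n)} \mapsto \bY^{(n)}$; a second application absorbs $\cM$ via $\bX \mapsto \bZ^{(n)}$ in a further elementary extension. Everything else --- the Pythagorean identity, the reduction of general $Z \in \mathrm{W}^*(\bX)$ to quantifier-free definable functions of $\bX$, and uniform $L^2$-continuity on operator-norm balls --- is routine given the framework developed earlier in the paper.
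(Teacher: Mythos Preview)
Your proof is correct and follows essentially the same approach as the paper's: the same Pythagorean computation for $(2)\Rightarrow(1)$, and for $(1)\Rightarrow(2)$ the same near-optimal-coupling plus elementary-amalgamation construction of $\cP_n$, followed by the quantifier-free definable function argument via Proposition~\ref{prop:deffuncrealization}. Your added discussion of the amalgamation step (two applications of compactness) and of the ``in fact'' clause makes explicit what the paper handles in a phrase, but there is no substantive difference in strategy.
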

	
	\begin{proof}
		(2) $\implies$ (1).  Suppose that (2) holds.
		Let $\Phi^{(n)} = (\beta^{(n)})^* \alpha^{(n)}$ where $\alpha^{(n)}: \cM \to \cP^{(n)}$ and $\beta^{(n)}: \cM^{(n)} \to \cP^{(n)}$ are elementary embeddings.  Then
		\begin{align*}
			d_W(\tp^{\cM^{(n)}}(\bX^{(n)}), \tp^{\cM}(\bX))^2 &\leq \norm{\alpha^{(n)}(\bX) - \beta^{(n)}(\bX^{(n)})}_{L^2(\cP^{(n)})^d}^2 \\
			&= \norm{\alpha^{(n)}(\bX)}_{L^2(\cP^{(n)})^d}^2 - 2 \re \ip{\alpha^{(n)}(\bX), \beta^{(n)}(\bX^{(n)})}_{L^2(\cP^{(n)})^d} + \norm{\beta^{(n)}(\bX^{(n)})}_{L^2(\cP^{(n)})^d}^2 \\
			&= \norm{\bX}_{L^2(\cM)^d}^2 - 2 \re \ip{\Phi^{(n)}(\bX), \bX^{(n)}}_{L^2(\cM^{(n)})^d} + \norm{\bX^{(n)}}_{L^2(\cM^{(n)})^d}^2 \\
			&= \norm{\bX}_{L^2(\cM)^d}^2 - \norm{\Phi^{(n)}(\bX)}_{L^2(\cM^{(n)})^d}^2 + \norm{\Phi^{(n)}(\bX) - \bX^{(n)}}_{L^2(\cM^{(n)})^d}^2.
		\end{align*}
		The latter goes to zero as $n \to \cU$ since $\iota(\bX) = [\Phi^{(n)}(\bX)]_{n \in \N} = [\bX^{(n)}]_{n \in \N}$ in $\prod_{n \to \cU} \cM^{(n)}$.
		
		Now let us prove (1) $\implies$ (2) as well as the final claim of the lemma.  Assume that (1) holds.  Let $\iota: \cM \to \prod_{n \to \cU} \cM^{(n)}$ be any elementary embedding with $\iota(\bX) = [\bX^{(n)}]_{n \in \N}$ (which exists by Lemma \ref{lem:ultraproductweakstar} since Wasserstein convergence implies weak-$*$ convergence).  Since $\tp^{\cM^{(n)}}(\bX^{(n)})$ converges to $\tp^{\cM}(\bX)$ in the Wasserstein distance, there exists models $\cP^{(n)} \models \rT$ and $\bY^{(n)}, \bZ^{(n)} \in (D_r^{\cP^{(n)}})^d$ such that
		\[
		\tp^{\cP^{(n)}}(\bY^{(n)}) = \tp^{\cM}(\bX), \qquad \tp^{\cP^{(n)}}(\bZ^{(n)}) = \tp^{\cM^{(n)}}(\bX^{(n)}), \qquad \lim_{n \to \cU} \norm{\bY^{(n)} - \bZ^{(n)}}_{L^2(\cP^{(n)})} = 0.
		\]
		By enlarging the model $\cP^{(n)}$ if necessary, we can arrange that there are elementary embeddings $\alpha^{(n)}: \cM \to \cP^{(n)}$ and $\beta^{(n)}: \cM^{(n)} \to \cP^{(n)}$ such that $\alpha^{(n)}(\bX) = \bY^{(n)}$ and $\beta^{(n)}(\bX^{(n)}) = \bZ^{(n)}$.  Now consider the elementarily factorizable map $\Phi^{(n)} = (\beta^{(n)})^* \alpha^{(n)}$.
		
		We claim that $\iota(Z) = [\Phi^{(n)}(Z)]_{n \in \N}$ for all $Z \in \mathrm{W}^*(\bX)$.  Fix $Z$.  By Proposition \ref{prop:deffuncrealization}, there exist a quantifier-free definable function $f$ relative to $\rT_{\tr}$ such that $Z = f^{\cM}(\bX)$.  Since the conditional expectation is contractive in $L^2$,
		\begin{align*}
			\norm*{\Phi^{(n)}(Z) - f^{\cM^{(n)}}(\bX^{(n)})}_{L^2(\cM^{(n)})^d} &= \norm*{(\beta^{(n)})^* \alpha^{(n)}(f^{\cM}(\bX)) - (\beta^{(n)})^* \beta^{(n)}(f^{\cM^{(n)}}(\bX^{(n)}))}_{L^2(\cM^{(n)})^d} \\
			&\leq \norm*{\alpha^{(n)}(f^{\cM}(\bX)) - \beta^{(n)}(f^{\cM^{(n)}}(\bX^{(n)}))}_{L^2(\cP^{(n)})^d} \\
			&= \norm*{f^{\cP^{(n)}}(\bY^{(n)})  - f^{\cP^{(n)}}(\bZ^{(n)})}_{L^2(\cP^{(n)})^d}.
		\end{align*}
		The right-hand side goes to zero as $n \to \cU$ because of the fact that $\norm{\bY^{(n)} - \bZ^{(n)}}_{L^2(\cP^{(n)})^d} \to 0$ and the $L^2$-uniform continuity property of definable functions (see \cite[Proposition 9.23]{BYBHU2008} and \cite[Lemma 3.19]{JekelCoveringEntropy}).  This implies that
		\[
		[\Phi^{(n)}(Z)]_{n \in \N} = [f^{\cM^{(n)}}(\bX^{(n)})]_{n \in \N} = f^{\prod_{n \to \cU} \cM^{(n)}} ([\bX^{(n)}]_{n \in \N}) = \iota(f^{\cM}(\bX)) = \iota(Z),
		\]
		where we have used the fact that quantifier-free definable functions commute with embeddings.
	\end{proof}
	
	A type $\mu \in \mathbb{S}_{d,r}(\rT)$ is called \emph{principal} if the weak-$*$ and Wasserstein topologies agree at $\mu$, meaning that every weak-$*$ neighborhood of $\mu$ in $\mathbb{S}_{d,r}(\rT)$ contains a Wasserstein neighborhood and vice versa (see \cite[Definition 12.2 and Proposition 12.4]{BYBHU2008}).  Since both the weak-$*$ and the Wasserstein topologies are metrizable, this is equivalent to saying that for a given free ultrafilter $\cU$ on $\N$, for every sequence $\mu^{(n)}$ in $\mathbb{S}_{d,r}(\rT)$, we have $\lim_{n \to \cU} \mu^{(n)} = \mu$ in the weak-$*$ topology if and only if $\lim_{n \to \cU} \mu^{(n)} = \mu$ in the Wasserstein distance.  Combining Lemmas \ref{lem:ultraproductweakstar} and \ref{lem:ultraproductWasserstein}, we obtain the following result.
	
	\begin{corollary} \label{cor:twotopologies}
		Let $\rT \supseteq \rT_{\tr}$ be a complete theory, let $\cM$ be a separable model of $\rT$, and let $\bX \in (D_r^{\cM})^d$.  Let $\cU$ be a free ultrafilter on $\N$.  Then the following are equivalent:
		\begin{enumerate}[(1)]
			\item $\tp^{\cM}(\bX)$ is a principal type.
			\item Given any models $\cM^{(n)} \models \rT$ and any elementary embedding $\iota: \cM \to \prod_{n \to \cU} \cM^{(n)}$, where $\cM^{(n)} \models \rT$, there exist elementarily factorizable maps $\Phi^{(n)}: \cM \to \cM^{(n)}$ such that
			\[
			\iota(Z) = [\Phi^{(n)}(Z)]_{n \in \mathbb{N}} \text{ for all } Z \in \mathrm{W}^*(\bX).
			\]
		\end{enumerate}
	\end{corollary}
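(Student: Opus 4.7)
The plan is to reduce this to the two ultraproduct characterizations just proved: Lemma \ref{lem:ultraproductweakstar} identifies weak-$*$ convergence along $\cU$ with the existence of an elementary embedding $\iota: \cM \to \prod_{n \to \cU} \cM^{(n)}$ sending $\bX$ to $[\bX^{(n)}]_{n \in \N}$, and Lemma \ref{lem:ultraproductWasserstein} identifies Wasserstein convergence with the additional data of elementarily factorizable maps $\Phi^{(n)}$ inducing $\iota$ on $\mathrm{W}^*(\bX)$. The whole proof then amounts to translating the topological statement of principality into this ultraproduct language and invoking metrizability of both topologies on $\mathbb{S}_{d,r}(\rT)$.

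For $(1) \Rightarrow (2)$, I would start with an arbitrary sequence $(\cM^{(n)})$ and elementary embedding $\iota: \cM \to \prod_{n \to \cU} \cM^{(n)}$, and write $\iota(\bX) = [\bX^{(n)}]_{n \in \N}$ for some representatives $\bX^{(n)} \in (D_r^{\cM^{(n)}})^d$. By the fundamental theorem of ultraproducts, $\lim_{n \to \cU} \tp^{\cM^{(n)}}(\bX^{(n)}) = \tp^{\cM}(\bX)$ in the weak-$*$ topology. Since $\tp^{\cM}(\bX)$ is principal, the weak-$*$ and Wasserstein topologies agree at this point, so every Wasserstein neighborhood contains a weak-$*$ neighborhood; hence $\lim_{n \to \cU} \tp^{\cM^{(n)}}(\bX^{(n)}) = \tp^{\cM}(\bX)$ in Wasserstein distance as well. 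Now I would appeal to the final sentence of Lemma \ref{lem:ultraproductWasserstein}, which asserts that given Wasserstein convergence, every elementary embedding $\iota$ realizing the limit admits the desired factorizable maps $\Phi^{(n)}$. Applied to our particular $\iota$, this yields (2).

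For $(2) \Rightarrow (1)$, I would argue by contradiction using metrizability. Suppose $\tp^{\cM}(\bX)$ is not principal. Since Wasserstein convergence always refines weak-$*$ convergence, the failure of principality means there is a Wasserstein-open neighborhood $W$ of $\tp^{\cM}(\bX)$ that contains no weak-$*$ neighborhood. Using metrizability of the weak-$*$ topology on $\mathbb{S}_{d,r}(\rT)$, I would produce a sequence $\mu^{(n)} = \tp^{\cM^{(n)}}(\bX^{(n)})$ with $\mu^{(n)} \to \tp^{\cM}(\bX)$ in the weak-$*$ topology but $\mu^{(n)} \notin W$ for every $n$. Then $\lim_{n \to \cU} \mu^{(n)} = \tp^{\cM}(\bX)$ in the weak-$*$ topology, so by Lemma \ref{lem:ultraproductweakstar} there exists an elementary embedding $\iota: \cM \to \prod_{n \to \cU} \cM^{(n)}$ with $\iota(\bX) = [\bX^{(n)}]_{n \in \N}$. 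Applying hypothesis (2) to this $\iota$ produces factorizable maps $\Phi^{(n)}$ as required by Lemma \ref{lem:ultraproductWasserstein}(2), which forces $\lim_{n \to \cU} \mu^{(n)} = \tp^{\cM}(\bX)$ in the Wasserstein distance. But then $\{n : \mu^{(n)} \in W\} \in \cU$, contradicting $\mu^{(n)} \notin W$ for all $n$.

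The only real subtlety, and thus the main point to be careful about, is the direction $(2) \Rightarrow (1)$: the hypothesis (2) quantifies over a single fixed free ultrafilter $\cU$, whereas principality is a topological property. The bridge is metrizability together with the observation that a convergent sequence trivially converges along every free ultrafilter, so the existence of a single witnessing sequence in the complement of $W$ is enough to force the desired contradiction. No further randomization of $\cU$ is needed; everything is packaged into the two previous lemmas.
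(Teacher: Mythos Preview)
Your proposal is correct and follows essentially the same approach as the paper: the paper simply asserts that the corollary follows by ``combining Lemmas \ref{lem:ultraproductweakstar} and \ref{lem:ultraproductWasserstein}'' after noting (in the paragraph preceding the statement) that metrizability of both topologies reduces principality to the sequential criterion along the fixed $\cU$. You have merely spelled out the two directions explicitly, using the contrapositive for $(2)\Rightarrow(1)$ rather than the direct sequential reformulation, which is logically equivalent.
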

	
	In the analogous situation for non-commutative $*$-laws, the stability or lifting property that characterized agreement of the weak-$*$ and Wasserstein topologies turned out to be closely related to amenability (more precisely, it was equivalent to amenability under the assumption of Connes-embeddability \cite[Proposition 5.26]{GJNS2021}).  One direction of this argument adapts readily to the model-theoretic setting.
	
	\begin{observation}
		Suppose that $\rT$ is a complete theory of a $\mathrm{II}_1$ factor.  Let $\cM \models \rT$ and $\bX \in M^d$.  If $\mathrm{W}^*(\bX)$ is amenable, then $\tp^{\cM}(\bX)$ is principal.
	\end{observation}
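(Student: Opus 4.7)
By Corollary~\ref{cor:twotopologies}, it suffices to prove the following: for every free ultrafilter $\cU$ on $\N$, every sequence $\cM^{(n)}\models\rT$, and every elementary embedding $\iota:\cM\hookrightarrow\cQ:=\prod_{n\to\cU}\cM^{(n)}$, there exist elementarily factorizable maps $\Phi^{(n)}:\cM\to\cM^{(n)}$ satisfying $[\Phi^{(n)}(Z)]_n=\iota(Z)$ for every $Z\in\cA:=\mathrm{W}^*(\bX)$. Fix such a $\iota$. The plan parallels the strategy used for quantifier-free types in \cite[Proposition 5.26]{GJNS2021} but must produce \emph{elementary} factorizations, which is the new difficulty.

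The first input is Connes' theorem: amenability of $\cA$ implies hyperfiniteness, so we can write $\cA=\overline{\bigcup_k\cA_k}^{\mathrm{SOT}}$ with $\cA_k$ finite-dimensional. The standard lifting of finite-dimensional subalgebras from tracial ultraproducts (via matrix-unit lifting) produces, for each $k$, trace-preserving $*$-homomorphisms $\pi_k^{(n)}:\cA_k\to\cM^{(n)}$ with $[\pi_k^{(n)}]_n=\iota|_{\cA_k}$. Exploiting the countable saturation of $\cQ$ and the inductive structure of the tower, a diagonal/reindexing argument assembles these into trace-preserving $*$-homomorphisms $\pi^{(n)}:\cA\to\cM^{(n)}$ with $[\pi^{(n)}(Z)]_n=\iota(Z)$ for every $Z\in\cA$. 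Composing with the trace-preserving conditional expectation $E_\cA^\cM:\cM\to\cA$, I propose the candidate $\Phi^{(n)}:=\pi^{(n)}\circ E_\cA^\cM$, which satisfies the ultralimit condition on $\cA$ automatically.

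The main obstacle is realizing each $\Phi^{(n)}$ as an \emph{elementarily} factorizable map. Take $\cP^{(n)}$ to be a countably saturated elementary extension of $\cM^{(n)}$ (for instance an ultrapower $(\cM^{(n)})^{\cV}$) with elementary diagonal embedding $\beta^{(n)}:\cM^{(n)}\hookrightarrow\cP^{(n)}$. One would like an elementary embedding $\alpha^{(n)}:\cM\hookrightarrow\cP^{(n)}$ with $\alpha^{(n)}|_\cA=\beta^{(n)}\circ\pi^{(n)}$, for then the amalgamated-free-product identity for conditional expectations gives $(\beta^{(n)})^*\alpha^{(n)}=\pi^{(n)}\circ E_\cA^\cM=\Phi^{(n)}$. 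Existence of such $\alpha^{(n)}$ via the Tarski--Vaught test amounts to the type-compatibility $\tp^{\cM^{(n)}}(\pi^{(n)}(\bW))=\tp^{\cM}(\bW)$ for all $\cA$-tuples $\bW$, which in general fails—this is where the subtlety lies.

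The resolution uses amenability a second time, through the approximate uniqueness of embeddings of amenable algebras into $\mathrm{II}_1$ factors (Jung's theorem and its extensions to general amenable subalgebras): combined with the flexibility afforded by passing to $\cP^{(n)}$ (where one can realize additional types by saturation) and by the fact that the condition $[\Phi^{(n)}(Z)]_n=\iota(Z)$ is an ultralimit statement insensitive to $\cU$-null perturbations, one adjusts both $\pi^{(n)}$ and the choice of $\cP^{(n)}$ so that the needed type-compatibility holds. Concretely, for each $n$ one realizes $\tp^{\cM}(\bW)$ in $\cP^{(n)}$ by saturation, chooses $\alpha^{(n)}$ extending $\beta^{(n)}\pi^{(n)}$ to an elementary embedding (accepting that this changes $\pi^{(n)}$ by a $\|\cdot\|_2$-small amount that vanishes in the $\cU$-ultralimit), and verifies that the resulting $\Phi^{(n)}=(\beta^{(n)})^*\alpha^{(n)}$ still satisfies $[\Phi^{(n)}(Z)]_n=\iota(Z)$. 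Amenability is what makes this adjustment possible without disturbing the ultralimit, and is the decisive ingredient distinguishing the amenable case from the general one.
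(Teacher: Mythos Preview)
Your proposal contains a genuine gap in the ``resolution'' paragraph, and the overall strategy is unnecessarily convoluted compared to the paper's argument.

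The issue is the construction of an elementary embedding $\alpha^{(n)}:\cM\to\cP^{(n)}$ extending $\beta^{(n)}\circ\pi^{(n)}$ on $\cA$. You correctly identify that this requires the type compatibility $\tp^{\cM^{(n)}}(\pi^{(n)}(\bW))=\tp^{\cM}(\bW)$, and you correctly note this fails in general. Your proposed fix (``realize $\tp^{\cM}(\bW)$ in $\cP^{(n)}$ by saturation'' and then ``adjust $\pi^{(n)}$ by a $\norm{\cdot}_2$-small amount'') is not a proof: realizing a type somewhere in $\cP^{(n)}$ does not give you an elementary map on $\cM$ that agrees with a prescribed map on $\cA$, and once you modify $\pi^{(n)}$ you must redo the whole lifting argument. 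The vague appeal to Jung's theorem does not close this loop. Also, the claimed identity $(\beta^{(n)})^*\alpha^{(n)}=\pi^{(n)}\circ E_\cA^\cM$ via an ``amalgamated-free-product identity'' is both unnecessary (you only need agreement on $\cA$, which is automatic) and unjustified on $\cM\setminus\cA$, since no free-product structure is present.

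The paper's approach avoids all of this by reversing the order of operations. Rather than lifting $\cA$ first and then struggling to make the result elementarily factorizable, one begins with \emph{arbitrary} elementary embeddings $\alpha_n:\cM\to\cN^{(n)}$ and $\beta_n:\cM^{(n)}\to\cN^{(n)}$ into a common elementary extension $\cN^{(n)}$ (these exist simply because $\cM\equiv\cM^{(n)}$). Passing to the ultraproduct $\prod_{n\to\cU}\cN^{(n)}$, one has two embeddings $\alpha$ and $\beta\circ\iota$ of $\cM$; now amenability of $\cA$ is used exactly once, to produce a unitary $U=[U_n]_n$ in $\prod_{n\to\cU}\cN^{(n)}$ with $U\alpha(Z)U^*=\beta\circ\iota(Z)$ for $Z\in\cA$. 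The point you missed is that $\operatorname{ad}_{U_n}\circ\alpha_n:\cM\to\cN^{(n)}$ is \emph{automatically} elementary, since conjugation by a unitary is an automorphism of $\cN^{(n)}$. Thus $\Phi_n=\beta_n^*\circ\operatorname{ad}_{U_n}\circ\alpha_n$ is elementarily factorizable by construction, with no type-compatibility obstruction to overcome.
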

	
	\begin{proof}
		Let $\iota: \cM \to \prod_{n \to \cU} \cM^{(n)}$ be an elementary embedding, where $\cM^{(n)} \models \rT$.  Since $\cM$ and $\cM^{(n)}$ are elementarily equivalent, there exists some $\cN^{(n)} \models \rT$ that contains $\cM$ and $\cN^{(n)}$ as elementary submodels; let $\alpha_n: \cM \to \cN^{(n)}$ and $\beta_n: \cM^{(n)} \to \cN^{(n)}$ be the elementary embeddings.  Consider the induced elementary embeddings $\alpha: \cM \to \prod_{n \to \cU} \cN^{(n)}$ and $\beta: \prod_{n \to \cU} \cM^{(n)} \to \prod_{n \to \cU} \cN^{(n)}$.  Then $\alpha$ and $\beta \circ \iota$ are both elementary embeddings of $\cM$ into $\prod_{n \to \cU} \cN^{(n)}$.  Since $\cA = \mathrm{W}^*(\bX)$ is amenable and $\prod_{n \to \cU} \cN^{(n)}$ is an $\aleph_0$-saturated $\mathrm{II}_1$-factor, the embeddings $\alpha|_{\cA}$ and $\beta \circ \iota|_{\cA}$ are unitarily conjugate, so there exist unitaries $U_n \in \cN^{(n)}$ such that
		\[
		\beta \circ \iota(Z) = [U_n \alpha_n(Z)U_n^*]_{n \in \N} \text{ for } Z \in \cA.
		\]
		Then $\Phi_n = \beta_n^* \operatorname{ad}_{U_n} \circ \alpha_n$ is the desired elementarily factorizable map $\cM \to \cM^{(n)}$.  (For further detail, see the proof of \cite[Proposition 5.26]{GJNS2021}.)
	\end{proof}
	
	This motivates the following question concerning the lifting property in Corollary \ref{cor:twotopologies} for future research.
	
	\begin{question}
		Let $\rT \supseteq \rT_{\tr}$ be a complete theory.  Let $\cM \models \rT$ and let $\cA$ be a tracial $\mathrm{W}^*$-subalgebra of $\cM$.  Under what conditions on $\rT$, $\cM$, and/or $\cN$ does it hold that, for every elementary embedding $\iota$ of $\cM$ into an ultraproduct $\prod_{n \to \cU} \cM^{(n)}$ of models of $\rT$, there exist elementarily factorizable maps $\Phi^{(n)}: \cM \to \cM^{(n)}$ such that $\iota(Z) = [\Phi^{(n)}(Z)]_{n \in \N}$ for all $Z \in \cA$.
	\end{question}
	
	\subsection{Free Gibbs $*$-laws and entropy}
	
	Free Gibbs laws are an important construction in the statistical mechanics / large deviations approach to random matrix theory studied in \cite{BdMPS}.  The multivariable analog of free Gibbs laws has been studied in \cite{BS2001,GMS2006,GS2009,GS2014,DGS2016,Dabrowski2017,JekelEntropy,JekelExpectation,JLS2022}.  Here we shall study free Gibbs $*$-laws for tuples of non-self-adjoint operators.
	
	Fix a $d$-variable definable predicate $V$ relative to $\rT_{\tr}$, and assume that
	\[
	a + b \norm{\bX}_{L^2(\cM)^d}^2 \leq V^{\cM}(\bX) \leq A + B \norm{\bX}_{L^2(\cM)^d}^2
	\]
	for some $a$, $A \in \R$ and $b, B \in (0,\infty)$.  Let $V^{(n)} = V^{M_n(\C)}$ and let $\mu^{(n)}$ be the probability measure on $M_n(\C)^d$ given by
	\[
	d\mu^{(n)}(\bX) = \frac{1}{\int_{M_n(\C)^d} e^{-n^2 V^{(n)}}} e^{-n^2 V^{(n)}(\bX)}\,d\bX,
	\]
	where $d\bX$ denotes Lebesgue measure (our assumed upper and lower bounds for $V$ guarantee that the integral of $e^{-n^2 V^{(n)}}$ converges).  Let $\bX^{(n)}$ be a random element of $M_n(\C)^d$ chosen according to the measure $\mu^{(n)}$.
	
	\begin{question} \label{q:Gibbslawconvergence}
		Under what conditions does $\tp^{M_n(\C)}(\bX^{(n)})$ converge in probability as $n \to \infty$ (or as $n \to \cU$ for some given free ultrafilter $\cU$)?
	\end{question}
	
	As in \cite[Proposition 7.11 and Corollary 7.12]{JLS2022}, one can show that if there is a unique type $\mu$ maximizing
	\[
	\chi_{\full}^{\cU}(\mu) - \mu[V],
	\]
	where $\mu[V]$ denotes the evaluation of $\mu$ viewed as an element of $C(\mathbb{S})_d(\rT_{\tr})$ on $V$, then Question \ref{q:Gibbslawconvergence} has a positive answer, and in fact, for every neighborhood $\cO$ of $\mu$ and every $r > 0$, we have
	\[
	\lim_{n \to \cU} \frac{1}{n^2} \log(1 - \mu^{(n)}(\Gamma_r^{(n)}(\cO))) < 0,
	\]
	meaning that a matrix chosen randomly according to $\mu^{(n)}$ has an exponentially small probability of its type landing outside a given neighborhood of $\mu$.  When $V$ is quantifier-free definable, a $*$-law which maximizes $\chi_{\qf}(\mu) - \mu[V]$ is called a \emph{free Gibbs $*$-law} for $V$.  Similarly, we can call $\mu$ a \emph{free Gibbs type} for a given definable predicate $V$ if it maximizes $\chi_{\full}(\mu) - \mu[V]$.  This prompts the following related question.
	
	\begin{question}
		Under what conditions does there exist a unique type $\mu$ that maximizes $\chi^{\cU}(\mu) - \mu[V]$?  Does this $\mu$ depend on the choice of free ultrafilter $\cU$?
	\end{question}
	
	In the case where $V$ is quantifier-free definable and satisfies certain smoothness assumptions and estimates on the derivatives, then one can obtain a unique solution by the stochastic diffusion techniques of \cite{BS2001} or the Dyson-Schwinger equation techniques of \cite{GMS2006} or a combination of the two; see for instance \cite{GS2009} or \cite[Proposition 8.1]{JLS2022}.  However, it is not clear to me at the time of writing how these techniques should be adapted to the setting of full types.  For the diffusion techniques, we would need to understand how the logical operations of $\sup$ and $\inf$ over operator norm balls combine with the theory of free stochastic differential equations (see below for further discussion).  Similarly, to adapt the Dyson-Schwinger equation approach to the setting of full types, we would some notion of derivatives and smoothness for definable predicates and definable functions.
	
	An open problem in Voiculescu's free entropy theory is whether, for a Connes-embeddable non-commutative $*$-law $\mu$, the microstate version $\chi(\mu)$ or $\chi^{\cU}(\mu)$ from \cite{VoiculescuFE2,VoiculescuFE3} agrees with another version of free entropy called $\chi^*(\mu)$, which is defined by studying the free Fisher information $\Phi^*(\mu)$ \cite{VoiculescuFE6}. The inequality $\chi(\mu) \leq \chi^*(\mu)$ is known thanks to \cite{BCG2003}.  Moreover, \cite{Dabrowski2017} and \cite{JekelEntropy} established equality when $\mu$ is a free Gibbs law for a quantifier-free definable predicate $V$ satisfying some convexity / semi-concavity assumptions.
	
	A key part of this analysis is to study the smoothness of the potential $V^{(n)}(\bX,t)$ obtained from convolving $\mu^{(n)}$ with a Gaussian distribution.  For the sake of Proposition \ref{prop:stochasticcontrol} below, we state the problem in terms of stochastic analysis.  We recall that a \emph{filtration} on a probability space $\Omega$ is a collection of $\sigma$-algebras such $(\cF_t)_{t \in [0,\infty)}$ with $\cF_s \subseteq \cF_t$ for $s \leq t$.  A stochastic process $(X_t)_{t \in [0,\infty)}$ (such as Brownian motion) is \emph{adapted} to $(\cF_t)_{t \in [0,\infty)}$ if $X_t$ is $\cF_t$-measurable for each $t$; moreover, $(X_t)_{t \in [0,\infty)}$ is \emph{progressively measurable} if for each $u \in [0,\infty)$, the function $(s,\omega) \mapsto X_s(\omega)$ on $[0,u] \times \Omega$ is measurable with respect to the product of the Borel $\sigma$-algebra on $[0,u]$ with $\cF_u$.
	
	Fix a probability space $\Omega$ with a filtration $(\cF_t)_{t \in [0,\infty)}$, and fix an $M_n(\C)^d$-valued Brownian motion $\bZ_t^{(n)}$ adapted to $\mathcal{F}_t$, normalized so that $\E \norm{\bZ_t^{(n)}}_{M_n(\C)^d}^2 = 2td$.  Let $\mu_t^{(n)}$ be the probability distribution of $\bX^{(n)} + \bZ_t^{(n)}$, and define $V^{(n)}(\cdot,t): M_n(\C)^d \to \R$ by
	\[
	d\mu_t^{(n)}(\bX) = \frac{1}{\int_{M_n(\C)^d} e^{-n^2 V^{(n)}}} e^{-n^2 V^{(n)}(\bX,t)}\,d\bX.
	\]
	The following expression for $V^{(n)}(\bX,t)$ can be obtained from stochastic optimal control theory (see for instance \cite[\S VI]{FR1975}).
	
	\begin{proposition} \label{prop:stochasticcontrol}
		\begin{equation} \label{eq:stochasticcontrol}
			V^{(n)}(\bX,t) = \inf_{\bA} \left[ \E[V^{(n)}(\bX_t) + \frac{1}{2t^2}  \int_0^t \norm{\bA(u)}_{L^2(M_n(\C))^d}^2 \,du \right],
		\end{equation}
		where $\bA$ ranges over all progressively measurable functions $\Omega \times [0,t] \to M_n(\C)^d$, and where
		\begin{align*}
			\bX_0 &= \bX \\
			d\bX_t &= d\bZ_t^{(n)} + A(t)\,dt
		\end{align*}
	\end{proposition}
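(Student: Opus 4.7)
The plan is to identify $V^{(n)}(\cdot, t)$ as (up to an $\bX$-independent additive constant in $t$) the negative log-Laplace transform of $V^{(n)}$ along the Brownian convolution, and then to invoke the Bou\'e--Dupuis variational representation for such log-Laplace transforms (equivalently, the Hopf--Cole transformation of the associated heat equation together with the verification theorem for the resulting viscous Hamilton--Jacobi--Bellman equation).

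First I would compute the density of $\mu_t^{(n)}$ directly. Since $\bX^{(n)} \sim \mu^{(n)}$ and $\bZ_t^{(n)}$ are independent, $\mu_t^{(n)}$ is the convolution of their densities, and the quadratic bounds on $V^{(n)}$ guarantee integrability of every quantity involved. This yields, for some $c(t)$ independent of $\bX$,
\[
V^{(n)}(\bX, t) = -\frac{1}{n^2} \log \E\!\left[e^{-n^2 V^{(n)}(\bX + \bZ_t^{(n)})}\right] + c(t),
\]
with $c(t)$ reflecting the freedom in the additive normalization of $V^{(n)}(\cdot, t)$. Next I would apply the Bou\'e--Dupuis variational formula to the right-hand side. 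The real Hilbert space $M_n(\C)^d$ has real dimension $2dn^2$, and the condition $\E\|\bZ_t^{(n)}\|^2 = 2td$ forces its covariance operator per unit time to be a specific multiple of the identity, so that the Bou\'e--Dupuis formula takes the form
\[
-\log \E\!\left[e^{-F(\bZ_t^{(n)})}\right] = \inf_{\bA} \E\!\left[F\!\left(\bZ_t^{(n)} + \int_0^t \bA(u)\,du\right) + \kappa \int_0^t \|\bA(u)\|_{L^2(M_n(\C))^d}^2\,du\right]
\]
for a constant $\kappa$ dictated by the inverse covariance, with $\bA$ ranging over progressively measurable drifts. Plugging in $F(\bY) = n^2 V^{(n)}(\bX + \bY)$ and dividing by $n^2$ yields an infimum representation matching \eqref{eq:stochasticcontrol}, once $c(t)$ has been absorbed into the chosen normalization of $V^{(n)}(\cdot, t)$ and the precise coefficient in front of $\int_0^t \|\bA(u)\|^2\,du$ has been tracked through the dimensional bookkeeping.

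As a sanity check on the constants, I would also verify the identity via the Hopf--Cole route: setting $u(\bX, t) := \E[e^{-n^2 V^{(n)}(\bX + \bZ_t^{(n)})}]$, Kolmogorov's forward equation gives $\partial_t u = \tfrac{1}{2} L u$, where $L$ is the Laplacian on $M_n(\C)^d$ with respect to the inner product coming from $\tr_n$. Substituting $u = \exp(-n^2 (V^{(n)}(\cdot, t) + c(t)))$ via Hopf--Cole shows that $V^{(n)}$ satisfies a viscous Hamilton--Jacobi--Bellman equation, and the classical verification theorem for stochastic optimal control then identifies $V^{(n)}(\bX, t)$ with the value function of the stochastic control problem on the right-hand side of \eqref{eq:stochasticcontrol}.

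The hard part will be regularity: $V^{(n)}$ is only assumed to be a definable predicate (continuous with the stated quadratic growth), not $C^{1,2}$, which blocks a direct application of the classical verification theorem. I would circumvent this either by first mollifying $V^{(n)}$ with the heat semigroup (the quadratic bounds make such mollification uniformly controllable on bounded sets, after which I would pass to the limit in both sides of the identity) or by appealing to the Bou\'e--Dupuis formula directly, which requires only measurability and the growth bound to make both sides finite and so sidesteps the regularity issue entirely. A secondary but delicate point is simply the bookkeeping needed to confirm that the $\tr_n$-normalized inner product on $M_n(\C)^d$, combined with the convention $\E\|\bZ_t^{(n)}\|^2 = 2td$, produces exactly the coefficient displayed in the running cost of \eqref{eq:stochasticcontrol}.
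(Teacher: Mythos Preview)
The paper does not actually give its own proof of this proposition: it simply states the formula and refers the reader to standard stochastic optimal control theory (the citation to Fleming--Rishel). So there is no ``paper's proof'' to compare against beyond that pointer.

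Your proposal is correct and is exactly the standard route one would take to justify such a formula. Identifying $V^{(n)}(\cdot,t)$ as a negative log-Laplace transform of the Gaussian convolution and then applying either Bou\'e--Dupuis or the Hopf--Cole/HJB verification theorem is precisely what the cited reference amounts to; your observation that Bou\'e--Dupuis avoids the $C^{1,2}$ regularity hypothesis that the classical verification theorem would demand is the right way to handle the fact that $V^{(n)}$ is only assumed continuous with quadratic growth. Your caution about the constant in the running cost is also well placed: the $\tfrac{1}{2t^2}$ appearing in the displayed statement is almost certainly a typo in the paper (the standard Bou\'e--Dupuis/HJB computation gives a $t$-independent coefficient, and indeed the paper's own discrete-time approximation $\Phi_t^{(n)}$ a few lines later uses $\tfrac{1}{2t}\norm{\bA}^2$, consistent with a running cost $\tfrac12\norm{\bA(u)}^2\,du$ integrated over $[0,t]$), so you should not expect your bookkeeping to reproduce that particular coefficient.
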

	
	Roughly speaking, \cite[\S 6]{JekelEntropy} showed that when $V^{(n)}$ is quantifier-free definable and satisfies some convexity and semi-concavity hypotheses, then $V^{(n)}(\bX,t)$ asymptotically approaches some quantifier-free definable predicate as $n \to \infty$.  In other words, the formula above which has an $\inf$ in it can be re-expressed in another way that does not have any quantifiers!  Heuristically speaking, this enabled the study of $V_t^{(n)}$ as $n \to \infty$ to proceed along the same lines as in classical probability theory where we have quantifier elimination, even though a priori the large-$n$ behavior would seem to fall in the realm of tracial $\mathrm{W}^*$-algebras which do not have quantifier elimination.  However, the regularity coming from the convexity assumptions on $V^{(n)}$ was crucial in the argument, so I expect that studying more general $V$ will require the analysis of quantifiers.
	
	Motivated by the strategy of \cite[\S 6]{JekelEntropy} (closely related to the earlier strategies of \cite{BCG2003,Dabrowski2017}), we want to show that this $V_t^{(n)}$ will asymptotically be described by some definable predicate.
	
	\begin{question} \label{q:asymptoticapproximation}
		Let $V$ be a definable predicate, and let $V^{(n)}(\bX,t)$ as above.  Does there exist a definable predicate $V_t$ such that for every $r > 0$,
		\[
		\lim_{n \to \cU} \sup_{\bX \in (D_r^{M_n(\C)})^d} \norm{V^{(n)}(\bX,t) - V_t^{M_n(\C)}(\bX)}_{L^2(M_n(\C))^d} = 0?
		\]
	\end{question}
	
	The following is a heuristic attempt to address this question, which shows that although we may not be able to obtain a quantifier-free definable $V_t$ for each quantifier-free definable $V$, there is some hope of obtaining a definable $V_t$.  This is part of our motivation for exploring free probability outside the quantifier-free setting.
	
	To address Question \ref{q:asymptoticapproximation}, we want to make a more precise connection between Proposition \ref{prop:stochasticcontrol} and the model theory of tracial $\mathrm{W}^*$-algebras.  Rather than getting into the technicalities of stochastic differential equations, we will look at discrete-time approximation of \eqref{eq:stochasticcontrol} as in \cite[\S 6]{JekelEntropy}.  Let
	\[
	\Phi_t^{(n)} V^{(n)}(\bX) = \inf_{\bA \in M_n(\C)^d} \left[ V^{(n)}(\bX + \bA + \bZ_t^{(n)}) + \frac{1}{2t} \norm{\bA}_{L^2(M_n(\C))^d}^2 \right].
	\]
	We conjecture that
	\[
	V^{(n)}(\bX,t) = \lim_{k \to \infty} (\Phi_{t/k}^{(n)})^k V^{(n)}(\bX);
	\]
	the results of \cite[\S 6]{JekelEntropy} essentially prove this assuming that $V^{(n)}$ is uniformly convex and semiconcave.
	
	Once again, let $\cQ = \prod_{n \to \cU} M_n(\C)$.  Heuristically, the limit of $\Phi_t^{(n)} V^{(n)}$ as $n \to \cU$ should be described by some function $\Phi_t V^{\cQ}: Q^d \to \R$ given by
	\[
	\Phi_t V^{\cQ}(\bX) = \inf \{ V^{\cQ}(\bX + \bA + t^{1/2} \bZ): \bA \in Q^d, \bZ \in Q^d \text{ free circular family free from } \bX \text{ and } \bA\}.
	\]
	This $\Phi_t V^{\cQ}$ is not a priori a definable predicate relative to $\Th(\cQ)$, but it can be described as a (pointwise) limit of definable predicates.  Indeed, one can verify that
	\[
	\{\tp^{\cQ}(\bX,\bA,\bZ): \bX, \bA, \bZ \in (D_r^{\cQ})^d, \bZ \text{ free circular family freely independent of } \bX, \bA \}.
	\]
	is a closed subset of $\mathbb{S}_{3d,r}(\bT_{\tr})$.  Hence, by Urysohn's lemma, there exists $\psi_r \in C(\mathbb{S}_{3d,r}(\rT_{\tr})$ with $\psi_r \geq 0$ and $\phi_r = 0$ precisely on this set.  This $\psi_r$ may be viewed as a definable predicate on $D_r^{3d}$, and using Proposition \ref{prop:extension}, $\psi_r$ may be extended globally to a definable predicate on $\mathbb{S}_{3d}(\rT_{\tr})$ satisfying $\psi_r \geq 0$ and $\psi_r^{\cM}(\bX,\bA,\bZ) = 0$ if and only if $\bZ$ is a standard free circular family freely independent of $\bX$ and $\bA$.  Let $\phi_{t,\epsilon}$ be the definable predicate given by
	\[
	\phi_{t,\epsilon}^{\cM}(\bX) = \inf_{\bA, \bZ \in D_r^{\cM}} V^{\cM}(\bX + \bA + \bZ) + \frac{1}{2t} \norm{\bA}_{L^2(\cM)^d}^2 +  \frac{1}{\epsilon} \psi_r^{\cM}(\bX,\bA,\bZ)
	\]
	for $\cM \models \rT_{\tr}$.  Then $\phi_{t,\epsilon}$ increases as $\epsilon$ decreases.  Furthermore, one can check that for $r > 2$, for $\bX \in D_r^{\cQ}$,
	\[
	\lim_{\epsilon \to 0^+} \phi_{t,\epsilon}^{\cQ}(\bX) = \inf \{V^{\cQ}(\bX+\bA+t^{1/2}\bZ) + \frac{1}{2t} \norm{\bA}_{L^2(\cQ)^d}: \bA, \bZ \in (D_r^{\cQ})^d, \psi_r(\bX,\bA,\bZ) = 0 \}.
	\]
	This is the same as $\Phi_t V^{\cQ}$ except that now the $\bA$ in the infimum is restricted to an operator norm ball.  After taking the limit as $r \to \infty$, we obtain $\Phi_t V^{\cQ}$.
	
	\begin{question}
		Can one show, under certain hypotheses on $V$, that the limits as $\epsilon \to 0^+$ and $r \to 0^+$ described above occur uniformly on each operator norm ball?
		
		Then can one show that $V_t^{\cQ} = \lim_{k \to \infty} (\Phi_{t/k})^k V^{\cQ}$ exists uniformly on each operator norm ball?
		
		Finally, if so, does this $V_t$ provide a positive answer to Question \ref{q:asymptoticapproximation}?
	\end{question}
	
	One possible flaw in this approach is that our construction only required the free circular family $\bZ$ to be freely independent of $\bA$ and $\bX$, which only describes something about the quantifier-free type of $(\bA,\bX,\bZ)$.  If we can discover a good notion of independence that pertains to the full type rather than only the quantifier-free type as proposed in \S \ref{sec:independence}, then it may be necessary or natural to require independence of $\bZ$ from $(\bA,\bX)$ with respect to their full types, not only the quantifier-free types.  This would be potentially more restrictive on the possible choices of $\bZ$ and thus lead to a larger infimum.  We hope to choose the right setup for the infimum that will accurately describe the large-$n$ limit of $V^{(n)}(\cdot,t)$ as in Question \ref{q:asymptoticapproximation}.
	
	If this can be done, then another goal would be to adapt the work of \cite{Dabrowski2017,JekelEntropy} to show some result that the free microstate entropy $\chi^{\cU}$ of a type agrees with some version of Voiculescu's free non-microstate entropy $\chi^*$.  Of course, a prerequisite for this endeavor would be to sort out what non-microstate entropy for types even is.
	
	\begin{question}
		Is there an analog of free Fisher information $\Phi^*(\mu)$ and free entropy $\chi^*(\mu)$ for a full type rather than a quantifier-free type?
	\end{question}
	
	I do not have much to say about this question now, but I hope that future researchers will explore the many interesting questions that arise from the combination of model theory and free probability.
	
	\bibliographystyle{plain}
	\bibliography{modelmicrostate}
	
\end{document}